\newif\ifdraft\draftfalse
\def\@begintheorem#1#2[#3]{%
    \def\naam{#1}
  \deferred@thm@head{\the\thm@headfont \thm@indent
    \@ifempty{#1}{\let\thmname\@gobble}{\let\thmname\@iden}%
    \@ifempty{#2}{\let\thmnumber\@gobble}{\let\thmnumber\@iden}%
    \@ifempty{#3}{\let\thmnote\@gobble}{\let\thmnote\@iden}%
    \thm@swap\swappedhead\thmhead{#1}{#2}{#3}%
    \the\thm@headpunct
    \thmheadnl 
    \hskip\thm@headsep
  }%
  \ignorespaces}
\newcommand{\kantlijndraft}[1]{\ifdraft\hspace{-\lastskip}%
\vadjust{\vspace{-1mm}\smash{\llap{{\tt#1}\hspace{8mm}}}\vspace{1mm}}\fi}
\def\voegToe#1#2#3{\immediate\write1{\string\newlabel{#1}{{#2}{#3}}}}
\newcommand{\thlabel}[1]{\voegToe{#1}{\naam\noexpand~\thetheorem-
}{\thepage}\kantlijndraft{#1}}
\renewcommand{\label}[1]{\voegToe{#1}{\@currentlabel}{\thepage}\kantlijndraft{#1}}
\newtheorem{theorem}{Theorem}[section]
\newtheorem{lemma}[theorem]{Lemma}
\newtheorem{corollary}[theorem]{Corollary}
\newtheorem{question}[theorem]{Question}
\newtheorem{proposition}[theorem]{Proposition}
\theoremstyle{definition}
\newtheorem{definition}[theorem]{Definition}
\theoremstyle{remark}
\numberwithin{equation}{section}
\newtheorem{claim2}{\sc Claim}
\newcommand{\sse}{\subseteq}                           
\newcommand{\minus}{\backslash}
\newcommand{\Un}{\bigcup}
\newcommand{\un}{\cup}
\newcommand{\Meet}{\bigcap}
\newcommand{\meet}{\cap}
\newcommand{\es}{\varnothing}                          
\newcommand{\closure}[1]{\ensuremath{\overline{#1}}}
\newcommand{\scr}[1]{\ensuremath{\mathcal{#1}}}
\def\cprime{$'$}
\def\sapirovskii{{\v{S}}apirovski{\u\i}}
\def\arhangelskii{Arhangel{\cprime}ski{\u\i}}
\def\juhasz{Juh{\'a}sz}
\begin{document}

\title{Power homogeneous compacta and variations on tightness}

\author{Nathan Carlson}\address{Department of Mathematics, California
Lutheran University, 60 W. Olsen Rd, MC 3750, Thousand Oaks, CA
91360 USA}
\email{ncarlson@callutheran.edu}

\subjclass[2010]{54A25, 54B10}

\keywords{cardinality bounds, cardinal invariants, homogeneous space, weak tightness}

\begin{abstract}
The weak tightness $wt(X)$, introduced in \cite{Car2018}, has the property $wt(X)\leq t(X)$. It was shown in \cite{BC2020} that if $X$ is a homogeneous compactum then $|X|\leq 2^{wt(X)\pi\chi(X)}$. We introduce the almost tightness $at(X)$ with the property $wt(X)\leq at(X)\leq t(X)$ and show that if $X$ is a power homogeneous compactum then $|X|\leq 2^{at(X)\pi\chi(X)}$. This improves the result of \arhangelskii, van Mill, and Ridderbos in \cite{AVR2007} that $|X|\leq 2^{t(X)}$ for a power homogeneous compactum $X$ and gives a partial answer to a question in \cite{BC2020}. In addition, if $X$ is a homogeneous Hausdorff space we show that $|X|\leq 2^{pw_cL(X)wt(X)\pi\chi(X)pct(X)}$, improving a result in \cite{BC2020b}. It also extends the result in \cite{BC2020} into the Hausdorff setting. The cardinal invariant $pwL_c(X)$, introduced in \cite{BS2020} by Bella and Spadaro, satisfies $pwL_c(X)\leq L(X)$ and $pwL_c(X)\leq c(X)$. We also show the weight $w(X)$ of a homogeneous space $X$ is bounded in various contexts using $wt(X)$. One such result is that if $X$ is homogeneous and regular then $w(X)\leq 2^{L(X)wt(X)pct(X)}$. This generalizes a result in \cite{BC2020} that if $X$ is a homogeneous compactum then $w(X)\leq 2^{wt(X)}$.
\end{abstract}

\maketitle

\section{Introduction.}

Motivated by results of \juhasz~and van Mill in \cite{JVM2018}, the \emph{weak tightness} $wt(X)$ of a topological space $X$ was introduced in \cite{Car2018} with the property $wt(X)\leq t(X)$ (see Definition~\ref{weakTightness}). This cardinal invariant was investigated further by Bella and the author in \cite{BC2020}, where the following theorem was shown. Recall a space $X$ is \emph{homogeneous} if for all $x,y\in X$ there exists a homeomorphism $h:X\to X$ such that $h(x)=y$. $X$ is \emph{power homogeneous} if there exists a cardinal $\kappa$ such that $X^\kappa$ is homogeneous.

\begin{theorem}[\cite{BC2020}]\label{cpthomog}
Let $X$ be a homogeneous compactum. Then,
\begin{itemize}
\item[(a)] $w(X)\leq 2^{wt(X)}$, and
\item[(b)]$|X|\leq 2^{wt(X)\pi\chi(X)}$. 
\end{itemize}
\end{theorem}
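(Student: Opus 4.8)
The plan is to treat part (a) as the main engine and then obtain part (b) from it by a short deduction. For (a), set $\kappa = wt(X)$ and exploit that for a compactum weight and network weight coincide, so it suffices to produce a base (equivalently a network) of cardinality at most $2^{\kappa}$. I would fix, as furnished by the definition of weak tightness, a cover $\mathcal{C}$ of $X$ with $|\mathcal{C}| \le 2^{\kappa}$ and $t(C) \le \kappa$ for each $C \in \mathcal{C}$, and run a closing-off argument with an elementary submodel $M \prec H(\theta)$ satisfying $|M| = 2^{\kappa}$, ${}^{\kappa}M \subseteq M$ (possible since $(2^{\kappa})^{\kappa} = 2^{\kappa}$), and $X, \mathcal{C}, \mathcal{B} \in M$ for a fixed base $\mathcal{B}$. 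The candidate base is $\mathcal{B} \cap M$, which has size at most $|M| = 2^{\kappa}$. Crucially, because I am bounding weight rather than cardinality, I do not need $X \subseteq M$ (indeed $|X|$ may exceed $|M|$, as with $2^{\omega_1}$); the entire content of (a) is to verify that $\mathcal{B} \cap M$ is genuinely a base.

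The heart of the argument is this reflection step. Given $p \in X$ and open $U \ni p$, I must find $B \in \mathcal{B} \cap M$ with $p \in B \subseteq U$, even when $p \notin M$. Here the cover does its job: $p$ lies in $\overline{C}$ for some $C \in \mathcal{C}$, and since $t(C) \le \kappa$ while $M$ is $\kappa$-closed, the $\le\kappa$-sized subsets of $C \cap M$ witnessing the relevant closure relations already belong to $M$, so enough of the local structure around $p$ is pulled into $M$. Homogeneity is what makes these local data uniform: it forces $\pi\chi(x,X)$ and $\chi(x,X)$ to be constant in $x$, and, more importantly, it lets me transport a separating configuration captured at a point of $\overline{C} \cap M$ over to $p$ by a homeomorphism that can itself be chosen in $M$. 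I expect this transfer to be the main obstacle, and it is exactly the point at which de la Vega's free-sequence technique (free sequences in a compactum have length at most $t(X)$) must be adapted to the cover, so that "$\le 2^{\kappa}$ pieces, each of tightness $\le \kappa$" plays the role formerly played by a single global tightness bound.

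Granting (a), part (b) follows quickly. First $d(X) \le w(X) \le 2^{wt(X)}$ by the trivial inequality between density and weight. Then I would invoke the known bound $|X| \le d(X)^{\pi\chi(X)}$, valid for power homogeneous (in particular homogeneous) Hausdorff spaces by Ridderbos's sharpening of the ZFC inequality $|X| \le d(X)^{\chi(X)}$, to conclude
\[
|X| \le \left(2^{wt(X)}\right)^{\pi\chi(X)} = 2^{\,wt(X)\,\pi\chi(X)},
\]
which is (b). This also explains the asymmetry between the two parts: the factor $\pi\chi(X)$ enters only through this last inequality, whereas the weight bound in (a) is insensitive to $\pi$-character. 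The genuinely difficult work is therefore concentrated in the reflection argument for (a); the passage from (a) to (b) is routine.
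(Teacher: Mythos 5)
Your reduction of (b) to (a) is sound and in fact matches the endgame of the real argument: $d(X)\le w(X)\le 2^{wt(X)}$ combined with Ridderbos's inequality $|X|\le d(X)^{\pi\chi(X)}$ for homogeneous Hausdorff spaces (Lemma~\ref{ridPH}) does give $|X|\le 2^{wt(X)\pi\chi(X)}$. The difficulty is that this loads all of the content onto (a), and for (a) you have written a plan, not a proof. The step you yourself flag as ``the main obstacle'' --- transporting a separating configuration from $\closure{C}\meet M$ to an arbitrary $p\notin M$ via a homeomorphism ``that can itself be chosen in $M$'' --- is precisely the step that does not come for free: elementarity supplies homeomorphisms lying in $M$, but not one carrying a prescribed point of $M$ to a prescribed $p\notin M$, and you give no argument that $\mathcal{B}\meet M$ reflects a base at such a $p$. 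Saying that de la Vega's free-sequence technique ``must be adapted to the cover'' names the difficulty without resolving it; that adaptation \emph{is} the theorem. A second, independent defect: you state the definition of $wt(X)$ as ``a cover $\mathcal{C}$ with $|\mathcal{C}|\le 2^{\kappa}$ and $t(C)\le\kappa$ for each $C$,'' omitting the clause $X=cl_{2^{\kappa}}C$ for every $C\in\mathcal{C}$ (Definition~\ref{weakTightness}). Without that clause the hypothesis is essentially vacuous (any space of cardinality at most $2^{\kappa}$ is covered by $2^{\kappa}$ singletons, each of tightness $\omega$), so no argument can succeed from the data you have permitted yourself; and your sketch never uses this clause anywhere.

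For comparison, the actual route --- visible in this paper through its generalizations (Theorems~\ref{compactsubset2}, \ref{L(X)}, \ref{w(X)} and Lemma~\ref{Csatnetwork}) --- avoids elementary submodels entirely. An $S$-free-sequence argument produces a nonempty compact set $G$ with $\chi(G,X)\le\kappa$ contained in $\closure{H}$ for a $\scr{C}$-saturated $H$ with $|H|\le 2^{\kappa}$; homogeneity translates $G$ to a cover of $X$ by $G^c_\kappa$-sets; the bound $L(X^c_\kappa)\le 2^{wt(X)\cdot\kappa}$ extracts a subcover of size at most $2^{\kappa}$; and the estimate $nw(\closure{H})\le |H|^{\kappa}\le 2^{\kappa}$ for $\scr{C}$-saturated sets yields $nw(X)\le 2^{\kappa}$, hence $w(X)\le 2^{\kappa}$ since weight and network weight agree on compacta. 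To repair your proposal you would need either to carry out the reflection step in full detail or to switch to this covering argument; as written, part (a), and therefore the whole proof, is not established.
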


As $\pi\chi(X)\leq t(X)$ for a compactum $X$ (\sapirovskii~\cite{Sap1972}), Theorem~\ref{cpthomog}(b) improved a result of de la Vega \cite{DeLaVega2006} that the cardinality of a homogeneous compactum is at most $2^{t(X)}$. 

It was asked in Question 3.13 in \cite{BC2020} if $2^{wt(X)\pi\chi(X)}$ is a bound for the cardinality of a \emph{power} homogeneous compactum $X$. While this question is still open, we attain a partial answer by introducing a cardinal invariant related to $wt(X)$, the \emph{almost tightness} $at(X)$ of a space $X$ (see Definition~\ref{almostTightness}). The almost tightness, satisfying $wt(X)\leq at(X)\leq t(X)$, resolves technical issues when considering power homogeneous spaces that are not resolved with $wt(X)$ (see section 2). We show in Theorem~\ref{cptPH} that $|X|\leq 2^{at(X)\pi\chi(X)}$ for a power homogeneous compactum $X$. This improves a result of \arhangelskii, van Mill, and Ridderbos \cite{AVR2007} that $|X|\leq 2^{t(X)}$ for such spaces. Theorem ~\ref{cptPH} is related to a result of \juhasz~and van Mill \cite{JVM2018} and also to a result in \cite{Car2018}. See section 3 for the details of these relationships.

In section 4 we generalize Theorem~\ref{cpthomog} into the Hausdorff and regular settings. The first step in this extension process is Theorem~\ref{compactsubset2}, which establishes that if $X$ is Hausdorff and $wt(X)pct(X)\leq\kappa$, then there exists a nonempty compact set $G\sse X$ and $H\sse X$ such that $\chi(G,X)\leq\kappa$, $G\sse\closure{H}$, $|H|\leq 2^\kappa$, and $H$ is $\scr{C}$-saturated. (See \ref{pct} for the definition of $pct(X)$ and \ref{Csat} for the definition of $\scr{C}$-saturated). This represents an extension of Theorem 3.3 in \cite{BC2020} into the Hausdorff setting. That theorem established the existence of $G$ and $H$ with the above properties when $X$ is compact and $wt(X)\leq\kappa$.

We use the cardinal invariant $pwL_c(X)$, first defined in \cite{BS2020} (see Definition~\ref{pwL} below). This invariant has the properties $pwL_c(X)\leq L(X)$ and $pwL_c(X)\leq c(X)$. It was shown in \cite{BS2020} that if $X$ is Hausdorff then $|X|\leq 2^{pwL_c(X)\chi(X)}$, unifying the well-known bounds $2^{L(X)\chi(X)}$ and $2^{c(X)\chi(X)}$ for the cardinality of a Hausdorff space $X$. We show in Theorem~\ref{pbound} that if $X$ is additionally homogeneous then $|X|\leq 2^{pwL_c(X)wt(X)\pi\chi(X)pct(X)}$. This improves Corollary 3.7 in \cite{BC2020b}, which states that $|X|\leq 2^{pwL_c(X)t(X)pct(X)}$ if $X$ is homogeneous and Hausdorff. Theorem~\ref{pbound} also generalizes Theorem~\ref{cpthomog}(b) into the class of Hausdorff spaces. 

We also extend Theorem~\ref{cpthomog}(a) into the regular setting by demonstrating that if $X$ is a homogeneous, regular, Hausdorff space then $w(X)\leq 2^{L(X)wt(X)pct(X)}$. 

By \emph{compactum} we mean a compact, Hausdorff space. For all undefined notions see the books \cite{Engelking} and \cite{Juhasz} by Engelking and \juhasz, respectively. We make no implicit assumption of any separation axiom on a space $X$.

\section{Tools for homogeneous product spaces.}
In this section we modify tools given in~\cite{AVR2007} and in~\cite{Ridderbos2007} for product spaces $X$ and homeomorphisms $h:X\to X$. Variations of these tools will be used to establish a result for power homogeneous spaces, Corollary~\ref{GPH}, that will be needed in the proof of Theorem~\ref{cptPH}. 

For this section we fix the following notation. Let $X=\prod\{X_i:i\in I\}$ be endowed with the product topology, where $X_i$ is a space for all $i\in I$. Let $H=\prod\{H_i:i\in I\}$ where $H_i\sse X_i$, and for all $i\in I$, fix $s_i\in H_i$. If $A\sse I$, let
$$H(A)=\{x\in H:x_i=s_i\textup{ except for finitely many }i\in A\}.$$
When $K\sse X$, let $K_A=\pi_A[K]$, where $\pi_A:X\to\prod\{X_i:i\in A\}$ is the projection map. If $x\in X$ and $\pi_j: X\to X_j$ is the projection for some $j\in I$, then we set $x_j=\pi_j(x)$.

The following is Corollary 2.9 in~\cite{AVR2007}, re-worded to fit our notation.

\begin{theorem}[Corollary 2.9~\cite{AVR2007}]\label{AVR}
Suppose that $X$ is homogeneous and for all $i\in I$ there exists a non-empty closed $G_\kappa$-set $G_i$ and a set $H_i\in[X_i]^{\leq\kappa}$ such that $G\sse\closure{H_i}$. If for some $j\in I$ we have $\pi\chi(X_j)\leq\kappa$, then there exists a cover $\scr{G}$ of $X_j$ consisting of closed $G_\kappa$-sets such that for all $G\in\scr{G}$ there exists $H_G\in[X_j]^{\leq\kappa}$ such that $G\sse\closure{H_G}$.
\end{theorem}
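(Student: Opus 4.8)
The plan is to run the standard homogeneity-transport argument: fix an arbitrary $y\in X_j$ and build a single closed $G_\kappa$-set $G_y\sse X_j$ with $y\in G_y$ and $G_y\sse\closure{H_y}$ for some $H_y\in[X_j]^{\leq\kappa}$. The family $\scr{G}=\{G_y:y\in X_j\}$ is then automatically a cover of $X_j$ by sets of the required form, so the entire content lies in producing one such $G_y$.

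First I would normalise the base points. Since each $G_i$ is nonempty, choose $s_i\in G_i$ for every $i\in I$; then $s=(s_i)_i\in\prod_iG_i\sse\prod_i\closure{H_i}=\closure{H}$. Fix any $q\in X$ with $q_j=y$ and, using homogeneity of $X$, a homeomorphism $h:X\to X$ with $h(s)=q$. Put $f=\pi_j\circ h:X\to X_j$, so $f$ is continuous and $f(s)=y$. The set $\prod_iG_i$ is the reservoir of structure: it is closed, it contains $s$, and its image under $f$ contains $y=f(s)$. The candidate cover element is therefore $G_y:=f[\prod_iG_i]$ (to be refined to a closed $G_\kappa$-set), and the candidate small set is $H_y:=f[H(A)]$ for a set $A\sse I$ with $\card{A}\leq\kappa$ still to be chosen. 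Here $\card{H(A)}\leq\kappa$ whenever $\card{A}\leq\kappa$, since a point of $H(A)$ is determined by a finite $F\sse A$ together with a choice in $\prod_{i\in F}H_i$, of which there are at most $\kappa\cdot\kappa=\kappa$; hence $\card{H_y}\leq\kappa$. Because $\prod_iG_i\sse\closure{H}$ and $f$ is continuous we already get $G_y\sse\closure{f[H]}$, but $f[H]$ is a priori too large to serve as $H_y$.

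The crux is a dependence lemma: to choose $A$ with $\card{A}\leq\kappa$ so strongly adapted to $f$ that simultaneously $\closure{f[H(A)]}\supseteq f[\prod_iG_i]$, giving $G_y\sse\closure{H_y}$, and $G_y$ is genuinely a closed $G_\kappa$-set. This is exactly where $\pichar{X_j}\leq\kappa$ is used. The idea is to fix a local $\pi$-base $\scr{V}$ at $y$ in $X_j$ with $\card{\scr{V}}\leq\kappa$; each $f^{-1}[V]$ for $V\in\scr{V}$ is open in the product and hence contains a basic box depending on finitely many coordinates, and collecting those coordinates over all $V\in\scr{V}$ yields $A$ with $\card{A}\leq\kappa\cdot\omega=\kappa$. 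The substance of the lemma is that the coordinates outside $A$ are then irrelevant to $f$ modulo $\scr{V}$, so that replacing the off-$A$ coordinates of any point of $H$ by the corresponding $s_i$, which lands it in $H(A)$, does not move its $f$-value out of $\closure{f[H(A)]}$; this upgrades $G_y\sse\closure{f[H]}$ to $G_y\sse\closure{H_y}$. Moreover, since $\card{A}\leq\kappa$, the set $\prod_{i\in A}G_i$ is itself a closed $G_\kappa$-set in $\prod_{i\in A}X_i$ (an intersection of at most $\kappa\cdot\kappa=\kappa$ open sets), so once $f$ is seen to factor through $\pi_A$ modulo $\scr{V}$ its image $G_y$ inherits being closed and $G_\kappa$. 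These are precisely the product-homeomorphism facts developed just before Corollary~2.9 in \cite{AVR2007}.

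I expect this dependence step to be the main obstacle, for two linked reasons. Projections preserve neither closedness nor the $G_\kappa$ property in general, and $\closure{f[H]}$ is a priori assembled from all coordinates; the hypothesis $\pichar{X_j}\leq\kappa$ --- in the spirit of \sapirovskii's inequality $\pi\chi\leq t$ --- is the precise device that confines the relevant behaviour of $f$ to $\leq\kappa$ coordinates and so restores both properties after projection. Secondly, one must pass from the essentially trivial pointwise fact $y\in\closure{H_y}$ to the genuine set statement $G_y\sse\closure{H_y}$, and this is exactly what the reservoir $\prod_iG_i$ together with the choice $s_i\in G_i$ accomplishes, since $\prod_iG_i\sse\closure{H}$ and the $A$-coordinate control transfers this density through $h$ and $\pi_j$. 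Once $A$ is fixed both requirements on $G_y$ are routine, and letting $y$ range over $X_j$ yields the cover $\scr{G}$.
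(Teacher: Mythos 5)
Your architecture is the right one and is exactly how the paper handles the parallel statement, Corollary~\ref{PH1}: transport a fixed pair $(G,H)$ to an arbitrary $y\in X_j$ by a homeomorphism $h$ with $h(s)=q$, $q_j=y$, and use a dependence set $A\in[I]^{\leq\kappa}$ so that the small set can be taken to be $T=\{h(s)_j:s\in H(A)\}$. However, your account of the two delicate steps would not go through as written. First, the dependence set cannot be obtained by a single pullback of a local $\pi$-base at the one point $y$. What the argument actually requires (Theorem~2.8 of \cite{AVR2007}, quoted as Theorem~\ref{getA2kappa}) is that for \emph{every} $s\in H(A)$ and every $x\in X$ with $x_A=s_A$ one has $h(x)_j=h(s)_j$; only then does $\pi_jh[\pi_A^{-1}[H(A)_A]]$ collapse to the $\leq\kappa$-sized set $T$, which is what converts the chain $G_A\sse\closure{H_A}=\closure{H(A)_A}$, $\pi_A^{-1}[G_A]\sse\closure{\pi_A^{-1}[H(A)_A]}$ (openness of $\pi_A$) into $\pi_jh[\pi_A^{-1}[G_A]]\sse\closure{T}$. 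Since $H(A)$ itself depends on $A$, this condition is self-referential and forces a recursive closing-off $A=\Un_{n}A_n$, where stage $n+1$ handles local $\pi$-bases at all points $h(s)_j$ for $s\in H(A_n)$. Your one-shot version controls $h$ only near $s$ and would at best give $y\in\closure{T}$, not $f[\prod_iG_i]\sse\closure{T}$.

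Second, the assertion that $G_y:=f[\prod_iG_i]$ ``inherits being closed and $G_\kappa$ once $f$ is seen to factor through $\pi_A$'' is not justified: $f$ does \emph{not} factor through $\pi_A$, because the coordinate-independence conclusion of Theorem~\ref{getA2kappa} holds only at points whose $A$-projection lies in $H(A)_A$, not globally; and, as you yourself note, images under $\pi_j\circ h$ destroy both closedness and the $G_\kappa$ property. The repair used in the paper's proof of Corollary~\ref{PH1} (and in \cite{AVR2007}) is the slice trick: set $Y=\{x\in X:x_i=q_i\text{ for all }i\neq j\}$, observe that $\left.\pi_j\right|_Y$ is a homeomorphism onto $X_j$, and take $F=\pi_j[Y\meet h[\pi_A^{-1}[G_A]]]$. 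This $F$ contains $y$, is a closed $G_\kappa$-set of $X_j$ because $\pi_A^{-1}[G_A]$ is a closed $G_\kappa$-set of $X$ (note that $\prod_{i\in I}G_i$ is only a $G_{\kappa\cdot|I|}$-set, another reason to restrict to the coordinates in $A$) and $h$ and $\left.\pi_j\right|_Y$ are homeomorphisms, and it still satisfies $F\sse\pi_jh[\pi_A^{-1}[G_A]]\sse\closure{T}$. With these two repairs --- the recursive construction of $A$ and the slice in place of the direct image --- your outline becomes the standard proof.
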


A set $G$ is a $G^c_\kappa$-\emph{set} of a space $Y$ if there exists a family of open sets $\scr{U}$ in $Y$ such that $|\scr{U}|\leq\kappa$ and $G=\Meet\scr{U}=\Meet_{U\in\scr{U}}\closure{U}$. Note that a $G^c_\kappa$-set is a special type of closed $G_\kappa$-set. Observe that a compact $G_\kappa$-set in a Hausdorff space $Y$ is a $G^c_\kappa$-set. Our aim in this section is to ensure that if each $G_i$ in the Theorem~\ref{AVR} is a $G^c_\kappa$-set in $X_i$ then each member of the cover $\scr{G}$ is a $G^c_\kappa$-set in $X_j$. We accomplish this in Corollary~\ref{PH1}.

The following are two results from~\cite{AVR2007}. The first gives straightforward properties of the set $H(A)$ for $A\in[I]^{\leq\kappa}$. The second gives conditions for the existence of a set $A\in[I]^{\leq\kappa}$ with certain properties, given a homeomorphism $h:X\to X$.

\begin{lemma}[Lemma 2.7~\cite{AVR2007}]\label{S(A)}
If $A\sse I$ is such that $|A|\leq\kappa$ for some cardinal $\kappa$, and for all $i\in A$, $|H_i|\leq\kappa$, then $|H(A)|\leq \kappa$ and $H(A)_A$ is dense in $H_A$. Futhermore, if $A\sse B$ then $H(A)\sse H(B)$. If $(A_n)$ is an increasing sequence of infinite subsets of $I$ and $A=\Un_{n=1}^{\infty} A_n$, then $H(A)=\Un_{n=1}^\infty H(A_n)$.
\end{lemma}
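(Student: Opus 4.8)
The plan is to prove Lemma~\ref{S(A)} (Lemma 2.7 from \cite{AVR2007}) by verifying each of its four assertions directly from the definition of $H(A)$, since all of them are elementary consequences of the structure of finitely-supported modifications of the fixed point $s=(s_i)_{i\in I}$. Recall that $H(A)=\{x\in H:x_i=s_i\text{ for all but finitely many }i\in A\}$ (and, implicitly, $x_i=s_i$ for all $i\notin A$, so that the support of $x$ relative to $s$ is a finite subset of $A$). First I would establish the cardinality bound $|H(A)|\leq\kappa$. Each $x\in H(A)$ is determined by a finite set $F\in[A]^{<\omega}$ together with a choice of coordinate $x_i\in H_i$ for each $i\in F$; thus $H(A)$ injects into $\bigcup_{F\in[A]^{<\omega}}\prod_{i\in F}H_i$. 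Since $|A|\leq\kappa$ we have $|[A]^{<\omega}|\leq\kappa$, and since each $|H_i|\leq\kappa$ for $i\in A$, each finite product $\prod_{i\in F}H_i$ has size at most $\kappa$. Hence $|H(A)|\leq\kappa\cdot\kappa=\kappa$, assuming $\kappa$ is infinite (which is the standing convention for cardinal invariants; I would note this).

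Next I would show $H(A)_A=\pi_A[H(A)]$ is dense in $H_A=\pi_A[H]$. The key point is that a basic open set in $\prod\{X_i:i\in A\}$ meeting $H_A$ constrains only finitely many coordinates $i\in A$; given any point of $H_A$ and such a basic neighborhood, I would modify that point on the finitely many constrained coordinates to match a point of $H$ there and reset all remaining $A$-coordinates to $s_i$. The resulting point lies in $\pi_A[H(A)]$ and falls inside the given neighborhood, proving density. The monotonicity statement, that $A\sse B$ implies $H(A)\sse H(B)$, is immediate: if the support of $x$ relative to $s$ is a finite subset of $A$, it is a fortiori a finite subset of $B$.

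Finally, for the directed-union claim, suppose $(A_n)$ is increasing with union $A$. The inclusion $\bigcup_n H(A_n)\sse H(A)$ follows from monotonicity. For the reverse inclusion, take $x\in H(A)$; its support is a finite set $F\sse A=\Un_n A_n$, and since the $A_n$ increase, $F\sse A_n$ for some single $n$ (using that a finite set contained in a directed union lands in one member), whence $x\in H(A_n)$. I do not expect any genuine obstacle here, as the lemma is a bookkeeping result about supports; the only mild subtlety is making the density argument precise, ensuring that resetting the unconstrained coordinates to $s_i$ keeps the point in $H(A)$ (which holds because each $s_i\in H_i$ by the standing assumption), and I would take care to state that the bounds use $\kappa\geq\omega$.
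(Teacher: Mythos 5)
Your proof is correct; the paper itself states this as a quoted result (Lemma 2.7 of \cite{AVR2007}) and gives no proof, so there is nothing to diverge from, and your argument is the standard one. All four parts check out: the injection of $H(A)$ into $\bigcup_{F\in[A]^{<\omega}}\prod_{i\in F}H_i$ gives $|H(A)|\le\kappa$ for infinite $\kappa$, the finite-support reading of the definition (with $x_i=s_i$ off $A$, which is what makes the cardinality bound true) is the intended one, and the density, monotonicity, and increasing-union claims follow exactly as you describe.
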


\begin{theorem}[Theorem 2.8~\cite{AVR2007}]\label{getA2kappa}
Suppose there exists a cardinal $\kappa$ such that $|H_i|\leq\kappa$ for all $i\in I$. Suppose that for some $j\in I$, $\pi\chi(X_j)\leq\kappa$. If $h:X\to X$ is a homeomorphism and $B\sse I$ such that $|B|\leq \kappa$, then there is a set $A\sse I$ such that $|A|\leq\kappa$, $B\sse A$, and for all $s\in H(A)$ and for all $x\in X$, if $x_A=s_A$ then $h(s)_j=h(x)_j$.
\end{theorem}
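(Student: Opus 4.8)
The plan is to construct $A$ by a closing-off recursion of length $\omega$, starting from $B$, keeping every stage of size at most $\kappa$. Throughout I will lean on Lemma~\ref{S(A)}: it guarantees both that $|H(A_n)|\leq\kappa$ whenever $|A_n|\leq\kappa$, and — crucially for the limit step — that if $(A_n)$ is increasing with union $A$ then $H(A)=\Un_{n}H(A_n)$, so that every $s\in H(A)$ already appears in some $H(A_n)$. I will also use $\pi\chi(X_j)\leq\kappa$ to fix, at each relevant point of $X_j$, a local $\pi$-base of size at most $\kappa$, together with the two facts that $\pi_j\circ h$ is continuous and that $h$ is a homeomorphism (so $h^{-1}[\pi_j^{-1}[V]]$ is open for every open $V\sse X_j$). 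Assuming without loss of generality that $\kappa\geq\omega$ and that $B$ is infinite, I set $A_0=B$.

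For the recursion step, suppose $A_n$ with $|A_n|\leq\kappa$ has been built. By Lemma~\ref{S(A)} the set $H(A_n)$ has size at most $\kappa$, so I may enumerate it. For each $s\in H(A_n)$ I fix a local $\pi$-base $\scr{V}_s$ at $h(s)_j$ with $|\scr{V}_s|\leq\kappa$. For each $V\in\scr{V}_s$ the set $h^{-1}[\pi_j^{-1}[V]]$ is nonempty (surjectivity of $h$) and open in the product $X$, so it contains a nonempty basic open box; I record the finite support of one such box and place it into $A_{n+1}$. In addition, for those $V\in\scr{V}_s$ with $h(s)_j\in V$ — equivalently $s\in h^{-1}[\pi_j^{-1}[V]]$ — continuity of $\pi_j\circ h$ gives a basic open neighborhood of $s$ inside $h^{-1}[\pi_j^{-1}[V]]$, whose finite support I also throw into $A_{n+1}$. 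There are at most $\kappa\cdot\kappa=\kappa$ such finite supports, so $|A_{n+1}|\leq\kappa$. Setting $A=\Un_{n<\omega}A_n$ yields $B\sse A$ and $|A|\leq\kappa$.

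For the verification, take $s\in H(A)$ and $x\in X$ with $x_A=s_A$. By the union identity in Lemma~\ref{S(A)} we have $s\in H(A_n)$ for some $n$, so the $\pi$-base $\scr{V}_s$ and all the supports recorded for $s$ at stage $n$ lie inside $A$. The forward, continuity-only half is immediate: for every $V\in\scr{V}_s$ with $h(s)_j\in V$, the captured support of the neighborhood of $s$ inside $h^{-1}[\pi_j^{-1}[V]]$ is contained in $A$, and since $x$ agrees with $s$ on $A$ this forces $h(x)_j\in V$. The difficulty is that $\scr{V}_s$ is only a $\pi$-base, not a neighborhood base, so the members containing $h(s)_j$ can have joint intersection far larger than $\{h(s)_j\}$; trapping $h(x)_j$ inside them does not by itself yield $h(x)_j=h(s)_j$.

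\emph{This $\pi$-character-to-equality passage is the main obstacle, and it is where the homeomorphism hypothesis is genuinely needed.} The plan is to show instead that $h(x)_j$ lies in $\closure{W}$ for every open $W\ni h(s)_j$, which pins down the value in the regular Hausdorff setting. Given such a $W$, I choose $V\in\scr{V}_s$ with $V\sse W$ (here $V$ need \emph{not} contain $h(s)_j$, which is exactly why continuity alone fails) and aim to prove $h(x)_j\in\closure{V}$. For this I use that $h^{-1}[\pi_j^{-1}[V]]$ is open and nonempty because $h$ is a homeomorphism, that its recorded finite support sits in $A$, and that $H(A)_A$ is dense in $H_A$ by Lemma~\ref{S(A)}; the goal is to approximate $x$ by points whose $j$-th image falls in $V$, so that every neighborhood of $h(x)_j$ meets $V$. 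Reconciling the finitely many coordinates on which a neighborhood of $x$ and the chosen box in $h^{-1}[\pi_j^{-1}[V]]$ overlap — using that $x$ and $s$ coincide on $A$ and hence on those captured coordinates — is the delicate combinatorial heart of the argument, and I expect it to be the step demanding the most care.
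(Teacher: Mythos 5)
The paper itself gives no proof of this statement: it is imported verbatim (up to notation) from \cite{AVR2007}, so there is no internal argument to compare yours against, and I can only judge the proposal on its own terms. Your construction of $A$ is the right one --- a closing-off recursion of length $\omega$ starting at $B$, using Lemma~\ref{S(A)} both to keep $|H(A_n)|\leq\kappa$ at each stage and to guarantee $H(A)=\Un_{n}H(A_n)$ at the limit --- and your diagnosis is also correct that the continuity-only argument merely traps $h(x)_j$ in the intersection of those $\pi$-base members that happen to contain $h(s)_j$, which does not pin down the point.

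The verification, however, aims at a false intermediate claim and is in any case left unfinished. You propose to show $h(x)_j\in\closure{W}$ for every open $W\ni h(s)_j$ by choosing $V\in\scr{V}_s$ with $V\sse W$ and proving $h(x)_j\in\closure{V}$. But $x=s$ itself satisfies the hypothesis $x_A=s_A$, so any argument establishing this for all such $x$ would in particular give $h(s)_j\in\closure{V}$ --- and a member of a local $\pi$-base at a point $p$ need not have $p$ in its closure. Concretely, $\{(1/(n+1),1/n):n\geq 1\}$ is a local $\pi$-base at $0$ in $\R$ with $0\notin\closure{(1/(n+1),1/n)}$ for every $n$; worse, at a point $p\in\beta\omega\setminus\omega$ (where $\pi\chi=\omega<\chi$) no countable local $\pi$-base can even be refined to one consisting of sets with $p$ in their closure, since such a family would witness countable character at $p$. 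So the step you defer as the ``delicate combinatorial heart'' is not merely delicate but unattainable along the route you describe, and that step is precisely the content of the theorem: some essentially different use of the fact that $h$ is a homeomorphism is required (your proposal invokes $h$ only to say $h^{-1}[\pi_j^{-1}[V]]$ is open and nonempty, which needs nothing beyond continuity and surjectivity). As it stands, the recursion is set up correctly but the conclusion is not reached.
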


We obtain the following new result, which is an adjustment to Corollary 2.9 in \cite{AVR2007}.

\begin{corollary}\label{PH1}
Suppose $X$ is homogeneous and Hausdorff and suppose for all $i\in I$ there exist $e_i\in X_i$, a $G^c_\kappa$-set $G_i\sse X_i$, and $H_i\sse X_i$ such that $|H_i|\leq\kappa$ and $e_i\sse G_i\sse\closure{H_i}$. If for some $j\in I$ we have $\pi\chi(X_j)\leq\kappa$, then for all $z\in X_j$ there exists a $G^c_\kappa$-set $F$ in $X_j$ and $T\sse X_j$ such that $|T|\leq\kappa$ and $z\in F\sse\closure{T}$. 
\end{corollary}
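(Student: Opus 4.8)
The plan is to follow the structure of the proof of Theorem~\ref{AVR} (Corollary 2.9 in \cite{AVR2007}), but to carry the stronger $G^c_\kappa$ information through each step. Fix $z\in X_j$ and set $e=(e_i)_{i\in I}\in\prod_{i\in I}G_i\sse X$. Since $X$ is homogeneous, I would first choose a homeomorphism $h\colon X\to X$ with $h(e)_j=z$ (pick any point $p\in X$ with $p_j=z$ and a homeomorphism carrying $e$ to $p$). Applying Theorem~\ref{getA2kappa} to $h$ with $B=\{j\}$ then produces a set $A\sse I$ with $j\in A$, $|A|\leq\kappa$, such that $h(s)_j=h(x)_j$ whenever $s\in H(A)$, $x\in X$, and $x_A=s_A$; informally, $h(\cdot)_j$ depends only on the $A$-coordinates along the fibres meeting $H(A)$.

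Next I would isolate the relevant $G^c_\kappa$-set in $X$. Writing each $G_i=\Meet_{U\in\scr{U}_i}\closure{U}$ with $|\scr{U}_i|\leq\kappa$, put $G^A=\Meet_{i\in A}\pi_i^{-1}[G_i]$. Since $\pi_i$ is continuous and open one has $\pi_i^{-1}[\closure{U}]=\closure{\pi_i^{-1}[U]}$, so $G^A=\Meet_{i\in A}\Meet_{U\in\scr{U}_i}\closure{\pi_i^{-1}[U]}$ is an intersection of at most $|A|\cdot\kappa=\kappa$ closures of open sets, i.e.\ a $G^c_\kappa$-set in $X$; hence $h[G^A]$ is a $G^c_\kappa$-set in $X$. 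Note $e\in G^A$, so $z=h(e)_j\in\pi_j[h[G^A]]$. I would also set $T=\pi_j[h[H(A)]]$, which satisfies $|T|\leq\kappa$ by Lemma~\ref{S(A)}.

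To locate $\pi_j[h[G^A]]$ inside $\closure{T}$ I would exploit the localization together with the density built into Lemma~\ref{S(A)}. Let $g\colon\prod_{i\in A}X_i\to X_j$ be the continuous map $g(y)=h(\rho(y))_j$, where $\rho(y)$ is the point whose $A$-coordinates are $y$ and whose remaining coordinates are the fixed $s_i$. For $s\in H(A)$ the localization gives $g(s_A)=h(s)_j\in T$, so $g[H(A)_A]\sse T$. The two continuous functions $x\mapsto h(x)_j$ and $x\mapsto g(x_A)$ agree on $\pi_A^{-1}[H(A)_A]$, hence, using that $\pi_A$ is open and $H(A)_A$ is dense in $H_A$, on $\pi_A^{-1}[\closure{H_A}]$; since $G_i\sse\closure{H_i}$ forces $(G^A)_A=\prod_{i\in A}G_i\sse\closure{H_A}$, they agree on $G^A$. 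Therefore $\pi_j[h[G^A]]=g[(G^A)_A]\sse g[\closure{H(A)_A}]\sse\closure{g[H(A)_A]}\sse\closure{T}$, as required.

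It remains to verify that $F:=\pi_j[h[G^A]]$ is itself a $G^c_\kappa$-set in $X_j$, and this is the step I expect to be the main obstacle. Writing $h[G^A]=\Meet_{i\in A,\,U\in\scr{U}_i}\closure{V_{i,U}}$ with $V_{i,U}=h[\pi_i^{-1}[U]]$ open in $X$, openness of $\pi_j$ gives $\pi_j[\closure{V_{i,U}}]\sse\closure{\pi_j[V_{i,U}]}$ with each $\pi_j[V_{i,U}]$ open in $X_j$, so automatically $F\sse\Meet_{i\in A,\,U\in\scr{U}_i}\closure{\pi_j[V_{i,U}]}$, a $G^c_\kappa$-set. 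The difficulty is the reverse inclusion: a projection need not preserve an intersection of closures. Here I would lean on the fact that, by the localization, the $j$-coordinate of $h$ has been decoupled from the coordinates outside $A$ along the fibres that matter (this is precisely the identity $h(x)_j=g(x_A)$ on $G^A$), which reduces $\pi_j\restriction h[G^A]$ to a projection out of the small product $\prod_{i\in A}X_i$ and lets the finite-support structure of $H(A)$ convert membership in $\Meet_{i,U}\closure{\pi_j[V_{i,U}]}$ into membership in $F$. Establishing this equality $F=\Meet_{i\in A,\,U\in\scr{U}_i}\closure{\pi_j[V_{i,U}]}$ is the crux; once it is in hand, $F$ is a $G^c_\kappa$-set with $z\in F\sse\closure{T}$ and $|T|\leq\kappa$, completing the proof.
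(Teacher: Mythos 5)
Your argument tracks the paper's up to and including the containment $\pi_j[h[G^A]]\sse\closure{T}$, but it stalls exactly where you say it does, and that unresolved step is a genuine gap rather than a routine verification. Taking $F=\pi_j[h[G^A]]$ to be the projection of the \emph{entire} set $h[G^A]=h[\pi_A^{-1}[G_A]]$ and aiming to prove $F=\Meet_{i\in A,\,U\in\scr{U}_i}\closure{\pi_j[V_{i,U}]}$ is the wrong target: the right-hand side is closed in $X_j$, while the projection of a closed subset of a product need not be closed (projections are open maps but not closed maps), so there is no reason for the reverse inclusion to hold; in general projections do not commute with intersections of closures. The localization identity $h(x)_j=g(x_A)$ on $G^A$ does not rescue this, because the same obstruction (a continuous image of an intersection of closures is only contained in, not equal to, the intersection of the closures of the images) reappears for the map $g$ out of $\prod_{i\in A}X_i$.

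The paper's proof sidesteps the problem with a cross-section trick that your write-up is missing. Let $w\in X$ with $w_j=z$ and $h(e)=w$, set $B=I\minus\{j\}$ and $Y=\{y\in X:y_B=w_B\}$, so that $\left.\pi_j\right|_Y:Y\to X_j$ is a homeomorphism, and put $K=Y\meet h[\pi_A^{-1}[G_A]]$. Writing $h[\pi_A^{-1}[G_A]]=\Meet\scr{U}=\Meet_{U\in\scr{U}}\closure{U}$ with $|\scr{U}|\leq\kappa$, one checks that $K=\Meet_{U\in\scr{U}}(U\meet Y)=\Meet_{U\in\scr{U}}cl_Y(U\meet Y)$, since $\Meet_{U\in\scr{U}}(U\meet Y)\sse\Meet_{U\in\scr{U}}(\closure{U\meet Y}\meet Y)\sse\Meet_{U\in\scr{U}}(\closure{U}\meet Y)=K$ closes the circle of containments; hence $K$ is a $G^c_\kappa$-set of $Y$, and $F=\pi_j[K]$ is a $G^c_\kappa$-set of $X_j$ because a homeomorphism preserves that property. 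Since $w\in K$ and $\pi_j[K]\sse\pi_j[h[\pi_A^{-1}[G_A]]]\sse\closure{T}$, this $F$ satisfies $z\in F\sse\closure{T}$. If you replace your final paragraph with this slice argument, the rest of your proof (the computation that $\pi_A^{-1}[G_A]$ is a $G^c_\kappa$-set and the density argument giving $\pi_j[h[G^A]]\sse\closure{T}$) goes through essentially as the paper has it.
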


\begin{proof}
Let $G=\prod\{G_i:i\in I\}$ and let $z\in X_j$ be arbitrary. Pick $w\in X$ such that $w_j=z$. As $X$ is homogeneous, there exists a homeomorphism $h:X\to X$ such that $h(e)=w$, where $e$ is the point of $X$ whose $i^{th}$ coordinate is $e_i$. By Theorem~\ref{getA2kappa} and Lemma~\ref{S(A)} we obtain $A\sse I$ such that $|A|\leq\kappa$ and 
\begin{enumerate}
\item $|H(A)|\leq\kappa$,
\item $e_A\in G_A\sse\closure{H(A)_A}$, and
\item for all $s\in H(A)$ and all $x\in X$ if $x_A=s_A$ then $h(x)_j=h(s)_j$.
\end{enumerate}
Now, for all $s\in H(A)$ the set $\pi_jh\pi_A^{-1}(s_A)$ consists of the single point $h(s)_j$. Let $T=\{h(s)_j:s\in H(A)\}$ and note $T\in [X_j]^{\leq\kappa}$. Since $G_A\sse\closure{H(A)_A}$ and $\pi_A$ is open, we have $\pi_A^{-1}[G_A]\sse\closure{\pi_A^{-1}[H(A)_A]}$. Thus
$$\pi_jh[\pi_A^{-1}[G_A]]\sse\closure{\pi_jh\pi_A^{-1}[H(A)_A]}=\closure{T}.$$

We show that $\pi_A[G_A]$ is a $G^c_\kappa$-set in $X$. As each $G_i$ is a $G^c_\kappa$-set in $X_i$, for all $i\in I$ there exists a family $\{U(i,\alpha):\alpha<\kappa\}$ of open sets in $X_i$ such that $G_i=\Meet_{\alpha<\kappa}U(i,\alpha)=\Meet_{\alpha<\kappa}\closure{U(i,\alpha)}$. Now,
\begin{align}
\pi_A^{-1}[G_A]&=\prod_{i\in A}G_i\times\prod_{i\notin A}X_i=\prod_{i\in A}(\Meet_{\alpha<\kappa}U(i,\alpha))\times\prod_{i\notin A}X_i\\
&=\Meet_{\alpha<\kappa}(\prod_{i\in A}U(i,\alpha)\times\prod_{i\notin A}X_i)=\Meet_{\alpha<\kappa}\Meet_{i\in A}(U(i,\alpha)\times\prod_{l\neq i}X_l).\notag
\end{align}

Therefore $\pi_A^{-1}[G_A]$ is the intersection of $\kappa$-many open sets in $X$, as $|A|\leq\kappa$. Similarly, it can be shown that $\pi_A^{-1}[G_A]=\Meet_{\alpha<\kappa}\Meet_{i\in A}\closure{U(i,\alpha)\times\prod_{l\neq i}X_l}$, making $\pi_A^{-1}[G_A]$ a $G^c_\kappa$-set in $X$. It follows that $h[\pi_A^{-1}[G_A]]$ is also a $G^c_\kappa$-set in $X$ as $h$ is a homeomorphism.

Let $B=I\minus\{j\}$ and $Y=\{y\in X:y_B=w_B\}$. Then $\left.\pi_j\right|_Y:Y\to X_j$ is a homeomorphism. Now let $K=Y\meet h[\pi_A^{-1}[G_A]]$. We show $K$ is a $G^c_\kappa$-set of $Y$. There exists a family $\scr{U}$ of open sets of $X$ such that $|\scr{U}|\leq\kappa$ and $h[\pi_A^{-1}[G_A]]=\Meet\scr{U}=\Meet_{U\in\scr{U}}\closure{U}$. Then,
$$K=Y\meet\Meet\scr{U}=\Meet_{U\in\scr{U}}(U\meet Y)\sse\Meet_{U\in\scr{U}}(\closure{U\meet Y}\meet Y)\sse\Meet_{U\in\scr{U}}(\closure{U}\meet Y)= K,$$
and therefore $K=\Meet_{U\in\scr{U}}(U\meet Y)=\Meet_{U\in\scr{U}}(\closure{U\meet Y}\meet Y)=\Meet_{U\in\scr{U}}cl_Y(U\meet Y)$.

This shows $K$ is a $G^c_\kappa$-set of $Y$. Let $F=\pi_j[K]$. Then $F$ is a $G^c_\kappa$-set of $X_j$ as $\left.\pi_j\right|_Y:Y\to X_j$ is a homeomorphism. We have,
$$z=w_j\in F=\pi_j[K]\sse\pi_jh[\pi_A^{-1}[G_A]]\sse\closure{T}.$$
Thus $z\in F\sse\closure{T}$. This completes the proof.
\end{proof}

For our last corollary in this section we dispense with the convention that $X$ is a product space $\prod\{X_i:i\in I\}$ and that $H=\prod\{H_i:i\in I\}$. It follows immediately from Corollary~\ref{PH1}. Compare this to Theorem~\ref{AVR}.

\begin{corollary}\label{GPH}
Let $X$ be a power homogeneous Hausdorff space. Suppose there exists a nonempty $G^c_\kappa$-set $G$ and a set $H\in[X]^{\leq\kappa}$ such that $G\sse\closure{H}$. Then there exists a cover $\scr{G}$ of $X$ consisting of $G^c_\kappa$-sets such that for all $G\in\scr{G}$ there exists $H_G\in[X]^{\leq\kappa}$ such that $G\sse\closure{H_G}$.
\end{corollary}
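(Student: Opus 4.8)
The plan is to deduce Corollary~\ref{GPH} directly from Corollary~\ref{PH1} by realizing the power homogeneous space $X$ as a factor of a homogeneous product space. Since $X$ is power homogeneous, there is a cardinal $\mu$ such that $X^\mu$ is homogeneous. First I would set up the product structure demanded by the notation of the section: let $I=\mu$, put $X_i=X$ for every $i\in I$, and form the homogeneous product $X^\mu=\prod\{X_i:i\in I\}$. The hypotheses give a nonempty $G^c_\kappa$-set $G\sse X$ and a set $H\in[X]^{\leq\kappa}$ with $G\sse\closure{H}$; fixing any point $e_0\in G$, I would take $e_i=e_0$, $G_i=G$, and $H_i=H$ for all $i\in I$, so that $e_i\in G_i\sse\closure{H_i}$ and $|H_i|\leq\kappa$ hold uniformly. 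This packages exactly the data that Corollary~\ref{PH1} requires.

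Next I would verify the remaining hypothesis of Corollary~\ref{PH1}, namely that $\pi\chi((X^\mu)_j)\leq\kappa$ for some index $j$; but here one must be careful, since $\pi\chi(X)$ is not assumed bounded by $\kappa$. The point is that Corollary~\ref{GPH} as stated makes no $\pi\chi$ assumption, so I expect the intended reading is that the $\pi$-character hypothesis is supplied by the relationship $G\sse\closure{H}$ with $|H|\leq\kappa$ already in force, or more precisely that Corollary~\ref{PH1} is to be applied with $j$ ranging so as to produce, for each target point $z\in X=X_j$, the desired local structure. Concretely, for an arbitrary $z\in X$ I would apply Corollary~\ref{PH1} to obtain a $G^c_\kappa$-set $F_z$ in $X_j=X$ and a set $T_z\in[X]^{\leq\kappa}$ with $z\in F_z\sse\closure{T_z}$.

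Having produced such $F_z$ and $T_z$ for every $z\in X$, I would simply collect them: set $\scr{G}=\{F_z:z\in X\}$ and, for each member $F=F_z$, designate $H_F=T_z$. Then every $F\in\scr{G}$ is a $G^c_\kappa$-set of $X$, each satisfies $F\sse\closure{H_F}$ with $H_F\in[X]^{\leq\kappa}$, and the family $\scr{G}$ covers $X$ because each $z\in X$ lies in its own $F_z\in\scr{G}$. This yields exactly the cover asserted in the conclusion.

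The main obstacle, and the only genuinely delicate point, is reconciling the $\pi$-character hypothesis. Corollary~\ref{PH1} explicitly requires $\pi\chi(X_j)\leq\kappa$ for some $j$, yet Corollary~\ref{GPH} carries no such hypothesis in its statement; the resolution must be that the operative $\kappa$ in the intended application already absorbs the $\pi$-character of $X$, or that the author regards $\pi\chi(X)\leq\kappa$ as implicit from the ambient setting in which Corollary~\ref{GPH} is invoked (the proof of Theorem~\ref{cptPH}). I would therefore make this dependence explicit in the write-up, noting that the factor $X_j$ coincides with $X$ so that any bound $\pi\chi(X)\leq\kappa$ transfers verbatim, and that under this reading the deduction from Corollary~\ref{PH1} is immediate and the "compare to Theorem~\ref{AVR}" remark is precisely the observation that we have upgraded "closed $G_\kappa$-set" to "$G^c_\kappa$-set" throughout.
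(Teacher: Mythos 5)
Your proposal is correct and is exactly the paper's intended argument: the paper gives no separate proof, stating only that Corollary~\ref{GPH} ``follows immediately from Corollary~\ref{PH1},'' and your reduction (take $I=\mu$ with $X_i=X$, $G_i=G$, $H_i=H$, $e_i=e_0\in G$, then collect the sets $F_z$ over all $z\in X$) is that immediate deduction. The one caveat you raise is real --- the hypothesis $\pi\chi(X)\leq\kappa$ needed by Corollary~\ref{PH1} is omitted from the statement of Corollary~\ref{GPH} but is explicitly available at its sole point of use in the proof of Theorem~\ref{cptPH}, where $\kappa=at(X)\pi\chi(X)$ --- so your decision to make that dependence explicit is the right one.
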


Given Theorem~\ref{compactsubset} in the next section, one might wonder whether one can instead suppose $|H|\leq 2^\kappa$ in the above corollary and conclude that each $H_G$ has cardinality at most $2^\kappa$ while keeping each $G\in\scr{G}$ a $G^c_\kappa$-set. However, an analysis of the subtleties of the proof of Corollary~\ref{PH1} does not allow this adjustment. If each $|H_i|\leq 2^\kappa$ in Corollary~\ref{PH1} then in fact the $A$ we obtain from Theorem~\ref{getA2kappa} will have cardinality at most $2^\kappa$. In that case, one sees from equation (2.1) in the proof of Corollary~\ref{PH1} that $\pi^{-1}[G_A]$ is a $G^c_{2^\kappa}$-set, not necessarily a $G^c_\kappa$-set. It is essentially for this reason that the cardinal invariant $at(X)$ is introduced, whereby Theorem~\ref{atcompactsubset} can be used in place of Theorem~\ref{compactsubset} in conjunction with the above corollary. This combination is used in the proof of Theorem~\ref{cptPH}, which gives a cardinality bound for any power homogeneous compactum.

\section{The cardinality of a power homogeneous compactum.}

We begin this section by defining the weak tightness $wt(X)$ of a space $X$ and related notions. Given a cardinal $\kappa$, a space $X$, and $A\sse X$, the $\kappa$-\emph{closure} of $A$ is defined as $cl_\kappa A=\Un_{B\in[A]^{\leq\kappa}}\closure{B}$. The definition of $\kappa$-closure was given in~\cite{CPR2012} and was used in~\cite{JVM2018}. Observe that $cl_\kappa(cl_\kappa A)=cl_\kappa A\sse\closure{A}$ and if $cl_\kappa A=X$ then $A$ is dense in $X$. 

\begin{definition}[\cite{Car2018}]\label{weakTightness}
Let $X$ be a space. The \emph{weak tightness} $wt(X)$ of $X$ is defined as the least infinite cardinal $\kappa$ for which there is a cover $\scr{C}$ of $X$ such that $|\scr{C}|\leq 2^\kappa$ and for all $C\in\scr{C}$, $t(C)\leq\kappa$ and $X=cl_{2^\kappa}C$. We say that $X$ is \emph{weakly countably tight} if $wt(X)=\omega$. 
\end{definition}

\begin{definition}\label{almostTightness}
Let $X$ be a space. The \emph{almost tightness} $at(X)$ of $X$ is defined as the least infinite cardinal $\kappa$ for which there is a cover $\scr{C}$ of $X$ such that $|\scr{C}|\leq\kappa$ and for all $C\in\scr{C}$, $t(C)\leq\kappa$ and $X=cl_{\kappa}C$. We say that $X$ is \emph{almost countably tight} if $at(X)=\omega$. 
\end{definition}

It is clear that $wt(X)\leq at(X)\leq t(X)$. See Example 2.3 in \cite{BC2020} for a consistent example of a compact space $X$ for which $wt(X)=\aleph_0<\aleph_1=at(X)=t(X)$. 

We will see that while $wt(X)$ is used in cardinal inequalities on homogeneous spaces (Theorems~\ref{cpthomog}, \ref{pbound}, \ref{w(X)}), $at(X)$ is more suited for cardinal inequalities on \emph{power} homogeneous spaces (Theorem~\ref{cptPH}). This is because $at(X)$ ``interacts'' well with Corollary~\ref{GPH}. (See comment after that corollary).

In \cite{JVM2018}, \juhasz~and van Mill introduced the notion of a $\scr{C}$-saturated subset of a space $X$. 
\begin{definition}[\cite{JVM2018}]\label{Csat}
Given a cover $\scr{C}$ of $X$, a subset $A\sse X$ is $\scr{C}$-\emph{saturated} if $A\meet C$ is dense in $A$ for every $C\in\scr{C}$. 
\end{definition}
It is clear that the union of $\scr{C}$-saturated subsets is $\scr{C}$-saturated. The following was mentioned in the proof of Lemma 3.2 in \cite{JVM2018} and proved in Proposition 3.2 in \cite{Car2018}. 

\begin{proposition}\label{saturated}
Let $X$ be a space, $wt(X)=\kappa$, and let $\scr{C}$ be a cover witnessing that $wt(X)=\kappa$. Then for all $x\in X$ there exists $S(x)\in[X]^{\leq 2^\kappa}$ such that $x\in S(x)$ and $S(x)$ is $\scr{C}$-saturated. 
\end{proposition}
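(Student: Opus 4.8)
The plan is to build $S(x)$ by a countable recursion in which, at each stage, every point accumulated so far is approximated from within each member of $\scr{C}$. The only features of the witnessing cover I will use are $|\scr{C}|\le 2^\kappa$ and that $X=cl_{2^\kappa}C$ for every $C\in\scr{C}$; the tightness hypothesis $t(C)\le\kappa$ plays no role here. The crucial observation is that $X=cl_{2^\kappa}C$ means precisely that for every $y\in X$ and every $C\in\scr{C}$ there is a set $B_{y,C}\in\setlessequal{C}{2^\kappa}$ with $y\in\closure{B_{y,C}}$. I would fix such a choice of witnesses $B_{y,C}$ for all $y\in X$ and $C\in\scr{C}$.

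Next I would set up the recursion. Put $S_0=\{x\}$, and given $S_n$ let $S_{n+1}=S_n\cup\Un\{B_{y,C}:y\in S_n,\ C\in\scr{C}\}$. An easy induction shows $|S_n|\le 2^\kappa$ for all $n$: if $|S_n|\le 2^\kappa$, then the union adjoins at most $|S_n|\cdot|\scr{C}|\le 2^\kappa\cdot 2^\kappa=2^\kappa$ witness sets, each of size at most $2^\kappa$, so $|S_{n+1}|\le 2^\kappa$ by the cardinal arithmetic $(2^\kappa)^2=2^\kappa$. I then set $S(x)=\Un_{n<\ww}S_n$, so that $x\in S(x)$ and $|S(x)|\le\aleph_0\cdot 2^\kappa=2^\kappa$, giving $S(x)\in\setlessequal{X}{2^\kappa}$.

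Finally I would verify that $S(x)$ is $\scr{C}$-saturated, that is, that $S(x)\meet C$ is dense in $S(x)$ for every $C\in\scr{C}$; this amounts to showing $S(x)\sse\closure{S(x)\meet C}$. Let $y\in S(x)$ and $C\in\scr{C}$. Then $y\in S_n$ for some $n<\ww$, so by construction $B_{y,C}\sse S_{n+1}\sse S(x)$. Since $B_{y,C}\sse C$, we have $B_{y,C}\sse S(x)\meet C$, and therefore $y\in\closure{B_{y,C}}\sse\closure{S(x)\meet C}$, as required.

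The step requiring the most care is the choice of recursion length, and the key point is that saturation is a ``one-step'' property: because each $y$ is approximated by a single set $B_{y,C}$ lying entirely inside $C$, as soon as that set is engulfed by $S(x)$ the point $y$ automatically lands in $\closure{S(x)\meet C}$, with no further iteration of closures needed. This is exactly why stopping at stage $\ww$ both achieves $\scr{C}$-saturation and keeps the cardinality at $2^\kappa$; running the recursion longer would be unnecessary and, without extra bookkeeping, could only inflate the size of $S(x)$.
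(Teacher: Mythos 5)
Your proof is correct and follows essentially the same route the paper takes: the paper defers the proof of this proposition to \cite{Car2018}, but its proof of the analogous statement for $at(X)$ (Proposition~\ref{atsaturated}) is exactly your $\omega$-stage recursion --- adjoin, for each point accumulated so far and each $C\in\scr{C}$, a witness set inside $C$ whose closure contains that point, and take the union over $\omega$ stages. The cardinality bookkeeping ($2^\kappa$ here versus $\kappa$ there) and the one-step saturation check are identical in both arguments.
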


Whenever $wt(X)=\kappa$ and $x\in X$, we fix $S(x)$ as obtained in Proposition~\ref{saturated}. If $A\sse X$, then we set $S(A)=\Un_{x\in A}S(x)$. 

The following is an adaptation of Proposition~\ref{saturated} for the invariant $at(X)$. Observe that while $S(x)\in[X]^{\leq 2^\kappa}$, in fact $T(x)\in[X]^{\leq\kappa}$. It is this crucial difference that ultimately allows for the cardinal inequality in Theorem~\ref{cptPH} to work for power homogeneous compacta using $at(X)$ rather than $wt(X)$.

\begin{proposition}\label{atsaturated}
Let $X$ be a space, $at(X)=\kappa$, and let $\scr{C}$ be a cover witnessing that $at(X)=\kappa$. Then for all $x\in X$ there exists $T(x)\in[X]^{\leq\kappa}$ such that $x\in T(x)$ and $T(x)$ is $\scr{C}$-saturated. 
\end{proposition}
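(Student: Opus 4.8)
The goal is to adapt the proof of Proposition~\ref{saturated} (which produces a $\scr{C}$-saturated set $S(x)$ of size at most $2^\kappa$) to the setting of $at(X)$, where the gain is that the witnessing cover $\scr{C}$ now has size at most $\kappa$ (rather than $2^\kappa$) and each $C\in\scr{C}$ satisfies $X=cl_\kappa C$ (rather than $cl_{2^\kappa}C$). These two strengthenings are exactly what should let me build a saturated set of size at most $\kappa$ instead of $2^\kappa$.

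The plan is to construct $T(x)$ as the union of an increasing $\omega$-chain of sets, closing off under the requirement of saturation at each finite stage. I would set $T_0=\{x\}$ and, given $T_n\in[X]^{\leq\kappa}$, define $T_{n+1}$ so as to repair the failure of saturation of $T_n$: for each $C\in\scr{C}$ I want $T_n\meet C$ to become dense in the relevant portion of the final set. Concretely, since $X=cl_\kappa C$ for each $C$, every point of $T_n$ lies in $\closure{B}$ for some $B\in[C]^{\leq\kappa}$; I can therefore attach to each point of $T_n$ and each $C\in\scr{C}$ a set of at most $\kappa$ points of $C$ whose closure captures that point. Taking $T_{n+1}$ to be $T_n$ together with all these attached points, over all $C\in\scr{C}$, keeps the cardinality at most $\kappa$: we have at most $\kappa$ points in $T_n$, at most $\kappa$ choices of $C$ (here is where $|\scr{C}|\leq\kappa$ is used crucially), and at most $\kappa$ attached points each, so $|T_{n+1}|\leq\kappa\cdot\kappa\cdot\kappa=\kappa$. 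Finally set $T(x)=\Un_{n<\omega}T_n$, so $|T(x)|\leq\kappa$ and $x\in T(x)$.

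The main work is verifying that $T(x)$ is $\scr{C}$-saturated, i.e. that $T(x)\meet C$ is dense in $T(x)$ for every $C\in\scr{C}$. Fix $C\in\scr{C}$ and a point $p\in T(x)$; then $p\in T_n$ for some $n$, and by construction of $T_{n+1}$ we added a set $B\in[C]^{\leq\kappa}$ with $p\in\closure{B}$ and $B\sse T_{n+1}\meet C\sse T(x)\meet C$. Hence every point of $T(x)$ lies in the closure of $T(x)\meet C$, which gives density. I expect the one genuinely delicate point to be the bookkeeping that guarantees closure under saturation is achieved in the limit: one must arrange the construction so that each point, once it appears at some stage, has its ``witnessing'' dense subsets of each $C$ added at the very next stage, so that no saturation obligation is deferred indefinitely. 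This is handled by the uniform ``attach-at-the-next-stage'' rule above, and the countable cofinality of the construction together with the fact that $\kappa\geq\omega$ ensures the union has the desired cardinality and saturation property. Note the tightness hypothesis $t(C)\leq\kappa$ plays no role here (it is used elsewhere); the construction only uses $|\scr{C}|\leq\kappa$ and $X=cl_\kappa C$, which is precisely why $at(X)$ yields the sharper bound $|T(x)|\leq\kappa$ unavailable from $wt(X)$.
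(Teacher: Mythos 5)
Your proof is correct and follows essentially the same construction as the paper's: an $\omega$-length recursion attaching, to each point already present and each $C\in\scr{C}$, a witnessing subset of $C$ of size at most $\kappa$ obtained from $X=cl_\kappa C$, with the cardinality count resting on $|\scr{C}|\leq\kappa$. The only (immaterial) difference is that you keep the stages increasing while the paper collects only the newly attached points at each stage and unions at the end.
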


\begin{proof}
Fix $x\in X$. We inductively construct a sequence of subsets $A_n\sse X$ such that for all $n$, $|A_n|\leq\kappa$. For all $C\in\scr{C}$ there exists $A_C\in[C]^{\leq\kappa}$ such that $x\in\closure{A_C}$. Let $A_1=\Un_{C\in\scr{C}}A_C$ and note that $|A_1|\leq\kappa$.  Suppose that $A_n$ has been constructed for $n<\omega$. For all $y\in A_n$ and for all $C\in\scr{C}$ there exists $A(C,y,n)\in [C]^{\leq\kappa}$ such that $y\in\closure{A(C,y,n)}$. Let $A_{n+1}= \Un\{A(C,y,n):y\in A_n, C\in\scr{C}\}$ and note that $|A_{n+1}|\leq\kappa$.

Define $T(x)=\{x\}\un\Un_{n<\omega}A_n$. Then $x\in T(x)$ and $|T(x)|\leq\kappa$. We show that $T(x)$ is $\scr{C}$-saturated. Let $C\in\scr{C}$. We show $T(x)\sse\closure{C\meet T(x)}$. By above, $x\in\closure{A_C}$ and $A_C\sse C\meet A_1\sse C\meet T(x)$. Thus $x\in\closure{C\meet T(x)}$. Now let $n\leq\omega$ and $y\in A_n$. By above, $y\in\closure{A(C,y,n)}$ and $A(C,y,n)\sse C\meet A_{n+1}\sse C\meet T(x)$. Thus $y\in\closure{C\meet T(x)}$. This shows that $T(x)\sse\closure{C\meet T(x)}$ and that $T(x)$ is $\scr{C}$-saturated.
\end{proof}

Whenever $at(X)=\kappa$ and $x\in X$, we fix $T(x)$ as obtained in Proposition~\ref{atsaturated}. If $A\sse X$, then we set $T(A)=\Un_{x\in A}T(x)$. 

\begin{lemma}[Lemma 2.7 in \cite{Car2018}]\label{closureunion}
Let $X$ be a space, $\kappa$ a cardinal such that $wt(X)\leq\kappa$, and $\scr{C}$ be a cover of $X$ witnessing that $wt(X)\leq\kappa$. If $\scr{B}$ is an increasing chain of $\kappa^+$-many $\scr{C}$-saturated subsets of $X$, then
$$\closure{\Un\scr{B}}=\Un_{B\in\scr{B}}\closure{B}.$$
\end{lemma}

The notion of an $S$-\emph{free sequence} was used heavily in connection with the weak tightness $wt(X)$ in \cite{BC2020}. Recall that for a cardinal $\kappa$ and a space $X$, a set $\{x_\alpha:\alpha<\kappa\}\sse X$ is a \emph{free sequence} if
$\closure{\{x_\beta:\beta<\alpha\}}\meet\closure{\{x_\beta:\alpha\leq\beta<\kappa\}}=\es$ for all $\alpha<\kappa$. We define
the notion of an $S$-free sequence below. Note that every $S$-free sequence is a free sequence.

\begin{definition}[\cite{BC2020}]\label{sfree}
Let $wt(X)=\kappa $. A set $\{x_\alpha:\alpha<\lambda \}$ is an \emph{S-free sequence} if
$\closure{S(\{x_\beta:\beta<\alpha\})}\meet\closure{\{x_\beta:\alpha\leq\beta<\lambda \}}=\es$ for all $\alpha<\lambda $.
\end{definition}

It was shown in \cite{BC2020} that if $X$ is compact and $wt(X)=\kappa$, then $X$ contains no $S$-free sequences of length $\kappa^+$. The following is related and is a key result in generalizing Theorem~\ref{cpthomog} to a non-compact setting in the next section.

\begin{proposition}\label{noSfree}
Let $X$ be a space, $K\sse X$ be a compact set, and $wt(X)=\kappa$. Then $K$ contains no $S$-free sequence of length $\kappa^+$. 
\end{proposition}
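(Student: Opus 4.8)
The plan is to argue by contradiction, supposing that $\{x_\alpha:\alpha<\kappa^+\}\sse K$ is an $S$-free sequence of length $\kappa^+$ inside the compact set $K$. The goal is to manufacture a single point of $K$ that simultaneously lies in the closure of an ``initial'' piece and an ``terminal'' piece of the sequence, thereby violating the defining disjointness of $S$-free sequences (Definition~\ref{sfree}). Compactness of $K$ is what forces such a meeting point to exist, so the overall shape is the familiar free-sequence--length argument, but carried out through the saturated sets $S(\cdot)$ rather than through the points directly.

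First I would record the two structural facts I expect to need. For each $\alpha<\kappa^+$ set $B_\alpha=S(\{x_\beta:\beta<\alpha\})$; these form an increasing chain of $\scr{C}$-saturated subsets of $X$ (the union of $\scr{C}$-saturated sets is $\scr{C}$-saturated, as noted just after Definition~\ref{Csat}). Since the chain has length $\kappa^+$, Lemma~\ref{closureunion} applies and gives $\closure{\Un_{\alpha}B_\alpha}=\Un_{\alpha}\closure{B_\alpha}$; this is the crucial exchange of closure and union that makes the whole scheme work. The $S$-free hypothesis says precisely that $\closure{B_\alpha}\meet\closure{\{x_\beta:\beta\geq\alpha\}}=\es$ for every $\alpha$.

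Next I would extract a limit point. Because $K$ is compact and $\{x_\alpha:\alpha<\kappa^+\}\sse K$ is infinite, I can choose a complete accumulation point $p\in K$ of the sequence, so that every neighborhood of $p$ contains $x_\beta$ for $\kappa^+$-many $\beta$. In particular, for each $\alpha<\kappa^+$ the tail $\{x_\beta:\beta\geq\alpha\}$ still accumulates at $p$, whence $p\in\closure{\{x_\beta:\beta\geq\alpha\}}$ for all $\alpha$. On the other hand, since $x_\alpha\in S(x_\alpha)\sse B_{\alpha+1}\sse\Un_\alpha B_\alpha$, the point $p$ is also in $\closure{\Un_\alpha B_\alpha}$, and so by the Lemma~\ref{closureunion} exchange there is some fixed $\alpha_0$ with $p\in\closure{B_{\alpha_0}}$. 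Combining, $p\in\closure{B_{\alpha_0}}\meet\closure{\{x_\beta:\beta\geq\alpha_0\}}$, contradicting that $\{x_\alpha\}$ is $S$-free.

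The step I expect to be the genuine obstacle is pinning down membership of $p$ in a single $\closure{B_{\alpha_0}}$: one must make sure $p$ really lands inside the union $\Un_\alpha B_\alpha$ (not merely its closure) in a way that licenses the use of Lemma~\ref{closureunion}, and then that the cofinality bookkeeping is correct, namely that the same $\alpha_0$ can be used against a tail that still accumulates at $p$. Here the fact that $\operatorname{cf}(\kappa^+)=\kappa^+>\kappa$ is what guarantees the complete accumulation point exists and that tails remain cofinal; I would state this cardinality/cofinality check carefully rather than gloss over it. Everything else is routine once the closure--union exchange and the accumulation point are in hand.
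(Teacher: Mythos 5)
Your proposal is correct and follows essentially the same route as the paper: take a complete accumulation point of the sequence in the compact set $K$, use Lemma~\ref{closureunion} to write $\closure{S(A)}$ as the union of the closures $\closure{S(\{x_\beta:\beta<\alpha\})}$, and contradict the defining disjointness of an $S$-free sequence. The only cosmetic difference is that the paper extracts the contradiction by counting points of $A$ in the open set $K\minus\closure{\{x_\beta:\alpha\leq\beta<\kappa^+\}}$, whereas you note directly that a complete accumulation point lies in the closure of every tail; these are equivalent, and your worry about landing in $\Un_\alpha B_\alpha$ rather than its closure is moot since $A\sse S(A)=\Un_\alpha B_\alpha$.
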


\begin{proof}
By way of contradiction suppose $A=\{x_\alpha:\alpha<\kappa^+\}$ is an $S$-free sequence contained in $K$. As $K$ is compact, $A$ has a complete accumulation point $x$ in $K$. Thus $|U\meet K\meet A|=\kappa^+$ for all open sets $U$ of $X$ containing $x$. Then, $x\in cl_KA\sse\closure{A}\sse\closure{S(A)}=\Un_{\alpha<\kappa^+}\closure{S(\{x_\beta:\beta<\alpha\})}$, where the latter equality follows from Lemma~\ref{closureunion} and the fact that $\kappa=wt(X)$. Thus there exists $\alpha<\kappa^+$ such that $x\in\closure{S(\{x_\beta:\beta<\alpha\})}$. It follows that $x\in K\minus\closure{\{x_\beta:\alpha\leq\beta<\kappa^+\}}$, an open set in $K$. This implies that $\kappa^+=|A\minus\closure{\{x_\beta:\alpha\leq\beta<\kappa^+\}}|\leq |\{x_\beta:\beta<\alpha\}|=\kappa$, a contradiction.
\end{proof}

We introduce the notion of a $T$-free sequence for use with the invariant $at(X)$. 

\begin{definition}\label{tfree}
Let $at(X)=\kappa $. A set $\{x_\alpha:\alpha<\lambda \}$ is an \emph{T-free sequence} if
$\closure{T(\{x_\beta:\beta<\alpha\})}\meet\closure{\{x_\beta:\alpha\leq\beta<\lambda \}}=\es$ for all $\alpha<\lambda $.
\end{definition}

It can also be shown that if $K$ is a compact subset of a space $X$ and $at(X)=\kappa$, then $K$ contains no $T$-free sequence of length $\kappa^+$. The proof is similar to that of Proposition~\ref{noSfree}.

\begin{proposition}\label{noTfree}
Let $X$ be a space, $K\sse X$ where $K$ is compact, and $at(X)=\kappa$. Then $K$ contains no $T$-free sequence of length $\kappa^+$. 
\end{proposition}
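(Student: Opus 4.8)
The plan is to mirror the proof of Proposition~\ref{noSfree} essentially verbatim, substituting $T$-free for $S$-free and $at(X)=\kappa$ for $wt(X)=\kappa$. The key observation making this substitution legitimate is that the proof of Proposition~\ref{noSfree} uses only two structural facts: first, that $X=cl_{?}C$ combined with the saturation property yields a ``closure-union'' identity over a chain of saturated sets (namely Lemma~\ref{closureunion}); and second, that a complete accumulation point of a long sequence in a compact set must, by this identity, already lie in the closure of a proper initial segment's saturation, contradicting the defining disjointness of the free sequence. Since $at(X)\leq t(X)$ and, more to the point, $wt(X)\leq at(X)$, the hypothesis $at(X)=\kappa$ gives $wt(X)\leq\kappa$, so Lemma~\ref{closureunion} applies to chains of $\scr{C}$-saturated subsets at the level $\kappa$.

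First I would argue by contradiction, assuming $A=\{x_\alpha:\alpha<\kappa^+\}\sse K$ is a $T$-free sequence. By compactness of $K$, the set $A$ has a complete accumulation point $x\in K$, so $|U\meet K\meet A|=\kappa^+$ for every open $U\ni x$. Next I would show $x\in\closure{T(A)}=\Un_{\alpha<\kappa^+}\closure{T(\{x_\beta:\beta<\alpha\})}$. For this I need the chain $\scr{B}=\{T(\{x_\beta:\beta<\alpha\}):\alpha<\kappa^+\}$ to consist of $\scr{C}$-saturated sets so that Lemma~\ref{closureunion} delivers the union-of-closures identity. Each $T(\{x_\beta:\beta<\alpha\})$ is a union of the individual $\scr{C}$-saturated sets $T(x_\beta)$ from Proposition~\ref{atsaturated}, and a union of $\scr{C}$-saturated sets is $\scr{C}$-saturated (noted in the excerpt after Definition~\ref{Csat}); the chain is increasing and has length $\kappa^+$, so Lemma~\ref{closureunion} applies. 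Hence $x\in\closure{A}\sse\closure{T(A)}=\Un_{\alpha<\kappa^+}\closure{T(\{x_\beta:\beta<\alpha\})}$, giving some fixed $\alpha<\kappa^+$ with $x\in\closure{T(\{x_\beta:\beta<\alpha\})}$.

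Then I would invoke the $T$-free disjointness: since $\closure{T(\{x_\beta:\beta<\alpha\})}\meet\closure{\{x_\beta:\alpha\leq\beta<\kappa^+\}}=\es$, the point $x$ lies in the open (in $K$) set $K\minus\closure{\{x_\beta:\alpha\leq\beta<\kappa^+\}}$. Because $x$ is a complete accumulation point, this open neighborhood must meet $A$ in $\kappa^+$-many points; but all points $x_\beta$ with $\beta\geq\alpha$ are excluded from it, so the intersection is contained in $\{x_\beta:\beta<\alpha\}$, forcing $\kappa^+\leq|\{x_\beta:\beta<\alpha\}|\leq\kappa$, the desired contradiction.

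The only place demanding genuine care — and the step I expect to be the main obstacle — is confirming that Lemma~\ref{closureunion} really is available under the hypothesis $at(X)=\kappa$ rather than $wt(X)=\kappa$. The lemma is stated for $\scr{C}$ a cover witnessing $wt(X)\leq\kappa$, whereas here $\scr{C}$ witnesses $at(X)=\kappa$. Since every cover witnessing $at(X)\leq\kappa$ also witnesses $wt(X)\leq\kappa$ (the defining requirements for $at$ are strictly stronger, as $|\scr{C}|\leq\kappa\leq 2^\kappa$ and $cl_\kappa C\sse cl_{2^\kappa}C$), the same cover $\scr{C}$ legitimately feeds into Lemma~\ref{closureunion}, and the chain argument goes through unchanged. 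Once that compatibility is pinned down, the remainder is a routine transcription of the proof of Proposition~\ref{noSfree}.
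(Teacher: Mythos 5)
Your proposal is correct and matches the paper's intent exactly: the paper omits the proof, saying only that it is "similar to that of Proposition~\ref{noSfree}," and your transcription is precisely that argument with $T(\cdot)$ in place of $S(\cdot)$. Your one nontrivial verification --- that a cover witnessing $at(X)\leq\kappa$ also witnesses $wt(X)\leq\kappa$ (since $|\scr{C}|\leq\kappa\leq 2^\kappa$ and $cl_\kappa C\sse cl_{2^\kappa}C$), so Lemma~\ref{closureunion} applies to the increasing chain of $\scr{C}$-saturated sets $T(\{x_\beta:\beta<\alpha\})$ --- is exactly the point that needed checking, and you handle it correctly.
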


\arhangelskii~showed in \cite{Arh1978} that if $X$ is a compactum and $t(X)\leq\kappa$, then there exists a nonempty compact set $G$ and a set $H$ such that $G\sse\closure{H}$, $|H|\leq\kappa$ and $\chi(G,X)\leq\kappa$. \juhasz~and van Mill \cite{JVM2018} improved this in the countable case by showing that if $X$ is a compactum that is the countable union of countably tight subspaces then there exists a nonempty closed $G_\delta$-set contained in the closure of a countable set. Theorem 3.3 in \cite{BC2020} gave a variation of these results, which states that if $X$ is a compactum and $wt(X)\leq\kappa$, then there exists a nonempty compact set $G$ and a $\scr{C}$-saturated set $H$ such that $G\sse\closure{H}$, $|H|\leq 2^\kappa$, and $\chi(G,X)\leq\kappa$. We prove an extension of this result into the Hausdorff setting.

\begin{theorem}\label{compactsubset}
Let $X$ be a Hausdorff space, $\kappa=wt(X)$, and $K$ a nonempty compact subset of $X$. Then there exists a nonempty closed set $G\sse K$ and a set $H\sse X$ such that $|H|\leq 2^\kappa$, $G\sse\closure{H}$, and $\chi(G,K)\leq\kappa$. In addition, $H$ is $\scr{C}$-saturated for any cover $\scr{C}$ witnessing $\kappa=wt(X)$.
\end{theorem}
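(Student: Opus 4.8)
The plan is to follow the strategy of the compact case (Theorem 3.3 of \cite{BC2020}) but to relativize every closure and character to the compact set $K$, using compactness of $K$ wherever the earlier argument used compactness of the whole space and, crucially, invoking Proposition~\ref{noSfree} in place of the fact that a compactum has no $S$-free sequence of length $\kappa^+$. First I would fix a cover $\scr{C}$ witnessing $wt(X)=\kappa$ together with the assignment $x\mapsto S(x)$ from Proposition~\ref{saturated}. The two conditions $|H|\leq 2^\kappa$ and ``$H$ is $\scr{C}$-saturated'' are essentially free: since each $S(x)\in[X]^{\leq 2^\kappa}$ and an arbitrary union of $\scr{C}$-saturated sets is again $\scr{C}$-saturated, any $H$ produced as a union of at most $2^\kappa$ sets of the form $S(y)$ will automatically satisfy both. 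Thus the real content is to build a nonempty closed $G\sse K$ with $\chi(G,K)\leq\kappa$ that is contained in the closure of such an $H$.

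Next I would carry out a transfinite recursion of length $\leq\kappa^+$, building an increasing continuous chain of $\scr{C}$-saturated sets $H_\alpha\in[X]^{\leq 2^\kappa}$ closed under $S$ (i.e.\ $S(H_\alpha)\sse H_{\alpha+1}$), a chain of points $x_\alpha\in K$, and a family of at most $\kappa$ open sets of $K$ serving as candidate neighborhoods of the eventual $G$. At successor stages I would use the hypothesis $t(C)\leq\kappa$ for each $C\in\scr{C}$, funnelled through the $\scr{C}$-saturation of $H_\alpha$, to pull the relevant $\kappa$-sized ``witnessing'' subsets into $H_{\alpha+1}$, and I would use the compactness of $K$ to extract complete accumulation points and keep the approximating closed subsets of $K$ nonempty. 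Setting $H=\Un_{\alpha}H_\alpha$ gives a $\scr{C}$-saturated set of size at most $2^\kappa$ (here $\kappa^+\leq 2^\kappa$), and $G$ would be the closed subset of $K$ distilled from the construction (the trace on $K$ of the accumulated points), so that $G\sse\closure{H}$ and $G\neq\es$ hold by design.

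The crux is the bound $\chi(G,K)\leq\kappa$, and this is exactly where Proposition~\ref{noSfree} enters. I would arrange the recursion so that the $\leq\kappa$ open sets accumulated along the way form a neighborhood base of $G$ in $K$; if they failed to, the unaccounted-for open set would let me choose a further point $x_\alpha\in K$ extending the sequence while preserving the $S$-free condition $\closure{S(\{x_\beta:\beta<\alpha\})}\meet\closure{\{x_\beta:\alpha\leq\beta\}}=\es$. Iterating through $\kappa^+$ stages would then produce an $S$-free sequence of length $\kappa^+$ in $K$, contradicting Proposition~\ref{noSfree}. Lemma~\ref{closureunion} is the tool that keeps this honest: it guarantees $\closure{\Un\scr{B}}=\Un_{B\in\scr{B}}\closure{B}$ for increasing chains of $\scr{C}$-saturated sets, so that $\closure{S(\{x_\beta:\beta<\alpha\})}$ varies continuously in $\alpha$ and the disjointness defining an $S$-free sequence can be maintained at limit stages.

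I expect the main obstacle to be coordinating the several demands on each step of the recursion simultaneously: each newly chosen $x_\alpha$ must lie in $K$, must respect the $S$-free disjointness condition, and must be absorbed into an $H_{\alpha+1}$ that remains both $\scr{C}$-saturated and of size at most $2^\kappa$. The secondary, genuinely new difficulty compared with the compact case is that $X$ itself is only Hausdorff: every appeal to compactness (for complete accumulation points, for nonemptiness of the nested closed approximations, and for passing from a $G_\kappa$-type description to the character bound) must be routed through the compactness of $K$ alone, with Hausdorffness of $X$ supplying the separation needed to realize the $S$-free disjointness inside $K$. Managing this relativization cleanly, while keeping $H$ $\scr{C}$-saturated, is the part that will require the most care.
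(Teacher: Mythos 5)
Your plan assembles the right ingredients --- Proposition~\ref{noSfree} as the engine, the assignment $x\mapsto S(x)$ with $|S(x)|\leq 2^\kappa$ from Proposition~\ref{saturated}, Lemma~\ref{closureunion} for handling increasing chains of $\scr{C}$-saturated sets, and nested closed $G_\kappa$-subsets of the compact set $K$ (whose character in $K$ is $\leq\kappa$ by compactness, with regularity of $K$, inherited from Hausdorffness of $X$, supplying closed $G_\delta$-sets inside open sets) --- and this is exactly the machinery the paper uses. The gap is in how you propose to obtain $G$ and secure the bound $\chi(G,K)\leq\kappa$. You plan to run the recursion, ``distill'' $G$ from the accumulated points, and then argue that the $\leq\kappa$ open sets collected along the way form a neighborhood base of $G$ in $K$, extending the $S$-free sequence whenever they fail to. But a recursion of length $\kappa^+$ accumulates $\kappa^+$, not $\kappa$, many open sets; the limit object of such a construction has no reason to have character $\leq\kappa$ in $K$; and it is not clear at which stage $<\kappa^+$ you could stop and declare success. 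As described, the step ``if the candidate base is inadequate, choose a further point'' does not obviously preserve both the $S$-freeness and the smallness of the base simultaneously.

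The paper closes this gap by inverting the quantifiers. It fixes the family $\scr{F}$ of \emph{all} nonempty closed $F\sse K$ with $\chi(F,K)\leq\kappa$ and assumes, for contradiction, that $F\minus\closure{H}\neq\es$ for every $F\in\scr{F}$ and every $\scr{C}$-saturated $H\in[X]^{\leq 2^\kappa}$. The character bound is then never verified at the end; it is built into the family being quantified over. The contradiction hypothesis is precisely what lets you pick $x_\alpha\in G_\alpha\minus\closure{S(A_\alpha)}$ at every stage $\alpha<\kappa^+$, where $A_\alpha=\{x_\beta:\beta<\alpha\}$ and $G_\alpha=\Meet_{\beta<\alpha}F_\beta$ is nonempty by compactness of $K$ and lies in $\scr{F}$; one then shrinks, using regularity of $K$, to a closed $G_\kappa$-set $F_\alpha\in\scr{F}$ with $x_\alpha\in F_\alpha\sse G_\alpha\minus\closure{S(A_\alpha)}$. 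The nesting $\{x_\beta:\alpha\leq\beta<\kappa^+\}\sse F_\alpha$ together with $\closure{S(A_\alpha)}\sse X\minus F_\alpha$ is what makes $\{x_\alpha:\alpha<\kappa^+\}$ an $S$-free sequence in $K$, contradicting Proposition~\ref{noSfree}. I recommend you reorganize your write-up along these lines; the rest of your plan then goes through essentially verbatim.
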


\begin{proof}
Let $\scr{C}$ be a cover of $X$ witnessing that $\kappa=wt(X)$. Set $\scr{F}=\{F\sse K:F\neq\es, F\textup{ closed },\chi(F,K)\leq\kappa\}$. Suppose by way of contradiction that for all $F\in\scr{F}$ and all $\scr{C}$-saturated $H\in [X]^{\leq 2^\kappa}$ we have $F\minus\closure{H}\neq\es$. We construct a decreasing sequence $\scr{F}^\prime=\{F_\alpha:\alpha<\kappa^+\}\sse \scr{F}$ and an $S$-free sequence $A=\{x_\alpha:\alpha<\kappa^+\}\sse K$.

For $\alpha<\kappa^+$, suppose $x_\beta\in K$ and $F_\beta$ have been defined for all $\beta<\alpha<\kappa^+$, where $F_{\beta^{\prime\prime}}\sse F_{\beta^\prime}$ for all $\beta^\prime<\beta^{\prime\prime}<\alpha$. Define $A_\alpha=\{x_\beta:\beta<\alpha\}$ and $G_\alpha=\Meet\{F_\beta:\beta<\alpha\}$. As $K$ is compact, $G_\alpha\neq\es$ and $G_\alpha\in\scr{F}$. As $|A_\alpha|\leq\kappa$, we have $|S(A_\alpha)|\leq\kappa\cdot 2^\kappa=2^\kappa$. Thus, $G_\alpha\minus\closure{S(A_\alpha)}\neq\es$ as $S(A_\alpha)$ is $\scr{C}$-saturated.

Let $x_\alpha\in G_\alpha\minus\closure{S(A_\alpha)}$. As $\chi(G_\alpha,K)\leq\kappa$, we have that $G_\alpha$ is a $G_\kappa$-set in $K$. There exists a family $\scr{U}$ of open sets in $X$ such that $|\scr{U}|\leq\kappa$ and $G_\alpha=\Meet\{U\meet K:U\in\scr{U}\}$. As $K$ is regular, for all $U\in\scr{U}$ there exists a closed $G_\delta$-set in $K$ such that $x_\alpha\in G_U\sse(U\meet K)\minus\closure{S(A_\alpha)}$, as $(U\meet K)\minus\closure{S(A_\alpha)}$ is open in $K$.

Set $F_\alpha=\Meet\{G_U:U\in\scr{U}\}$. Note $x_\alpha\in F_\alpha\sse G_\alpha\minus\closure{S(A_\alpha)}$ and that $F_\alpha$ is a nonempty closed $G_\kappa$-set in $K$. As $K$ is compact, $\chi(F_\alpha,K)\leq\kappa$ and $F_\alpha\in\scr{F}$. The choice of $x_\alpha$ and $F_\alpha$ completes the construction of the sequences $\scr{F}^\prime$ and $A$.

We show now that $A$ is an $S$-free sequence. Let $\alpha<\kappa^+$. Note $\{x_\beta:\alpha\leq\beta<\kappa^+\}\sse F_\alpha$ as $\scr{F}^\prime$ is a decreasing sequence. Then $\closure{\{x_\beta:\alpha\leq\beta<\kappa^+\}}\sse F_\alpha$. Also, $\{x_\beta:\beta<\alpha\}=A_\alpha$, so $\closure{S(\{x_\beta:\beta<\alpha\})}=\closure{S(A_\alpha)}\sse X\minus F_\alpha$. It follows that $\closure{S(\{x_\beta:\beta<\alpha\})}\meet\closure{\{x_\beta:\alpha\leq\beta<\kappa^+\}}=\es$ and $A$ is an $S$-free sequence contained in $K$ of length $\kappa^+$. This contradicts Proposition~\ref{noSfree}. Thus there exists $G$ and $H$ as required.
\end{proof}

We have a similar result for when $\kappa=at(X)$. The proof is identical, except that instances of $2^\kappa$ are replaced by $\kappa$, a $T$-free sequence is constructed, and Proposition~\ref{noTfree} is used instead of Proposition~\ref{noSfree}.

\begin{theorem}\label{atcompactsubset}
Let $X$ be a Hausdorff space, $\kappa=at(X)$, and $K$ a nonempty compact subset of $X$. Then there exists a nonempty closed set $G\sse K$ and a set $H\sse X$ such that $|H|\leq\kappa$, $G\sse\closure{H}$, and $\chi(G,K)\leq\kappa$. In addition, $H$ is $\scr{C}$-saturated in any cover $\scr{C}$ witnessing that $at(X)=\kappa$.
\end{theorem}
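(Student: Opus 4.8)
The plan is to mirror the proof of Theorem~\ref{compactsubset} essentially verbatim, making only the three substitutions that the authors explicitly flag: replace $2^\kappa$ with $\kappa$ throughout, build a $T$-free sequence rather than an $S$-free sequence, and invoke Proposition~\ref{noTfree} in place of Proposition~\ref{noSfree} at the final contradiction. So first I would fix a cover $\scr{C}$ of $X$ witnessing that $\kappa=at(X)$ and set $\scr{F}=\{F\sse K:F\neq\es,\ F\textup{ closed},\ \chi(F,K)\leq\kappa\}$. Arguing by contradiction, I assume that for every $F\in\scr{F}$ and every $\scr{C}$-saturated set $H\in[X]^{\leq\kappa}$ we have $F\minus\closure{H}\neq\es$, and I aim to construct a decreasing chain $\{F_\alpha:\alpha<\kappa^+\}\sse\scr{F}$ together with a $T$-free sequence $\{x_\alpha:\alpha<\kappa^+\}\sse K$.

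The inductive step proceeds exactly as before. At stage $\alpha$, having defined $x_\beta$ and $F_\beta$ for $\beta<\alpha$ with the $F_\beta$ decreasing, I set $A_\alpha=\{x_\beta:\beta<\alpha\}$ and $G_\alpha=\Meet\{F_\beta:\beta<\alpha\}$, which is nonempty and in $\scr{F}$ by compactness of $K$. The key cardinality point—and the place where the $at$ version genuinely improves on the $wt$ version—is that since $|A_\alpha|\leq\kappa$ and each $T(x)\in[X]^{\leq\kappa}$ rather than $[X]^{\leq2^\kappa}$, we get $|T(A_\alpha)|\leq\kappa\cdot\kappa=\kappa$. Thus $T(A_\alpha)$ is a $\scr{C}$-saturated set in $[X]^{\leq\kappa}$, and the contradiction hypothesis yields a point $x_\alpha\in G_\alpha\minus\closure{T(A_\alpha)}$. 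Using $\chi(G_\alpha,K)\leq\kappa$ and regularity of the compactum $K$, I carve out for each of the $\leq\kappa$ many basic neighborhoods a closed $G_\delta$-set $G_U$ with $x_\alpha\in G_U\sse(U\meet K)\minus\closure{T(A_\alpha)}$, and put $F_\alpha=\Meet\{G_U:U\in\scr{U}\}$, a nonempty closed $G_\kappa$-set in $K$ with $\chi(F_\alpha,K)\leq\kappa$ by compactness, so $F_\alpha\in\scr{F}$.

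Finally I verify that $\{x_\alpha:\alpha<\kappa^+\}$ is a $T$-free sequence. Since the $F_\alpha$ are decreasing and $x_\beta\in F_\alpha$ for $\beta\geq\alpha$, we have $\closure{\{x_\beta:\alpha\leq\beta<\kappa^+\}}\sse F_\alpha$, while $\closure{T(\{x_\beta:\beta<\alpha\})}=\closure{T(A_\alpha)}\sse X\minus F_\alpha$ by the choice of $x_\alpha$. Hence the two closures are disjoint for every $\alpha$, giving a $T$-free sequence in $K$ of length $\kappa^+$, contradicting Proposition~\ref{noTfree}. I do not anticipate a genuine obstacle here, since the structure is identical to Theorem~\ref{compactsubset}; the only point demanding care is confirming that the arithmetic closes at $\kappa$ rather than $2^\kappa$, which is exactly guaranteed by the strengthened bound $T(x)\in[X]^{\leq\kappa}$ in Proposition~\ref{atsaturated}. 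This is the whole reason the invariant $at(X)$ was introduced, and it is what makes the union $T(A_\alpha)$ small enough to feed back into the contradiction hypothesis at each of the $\kappa^+$ stages.
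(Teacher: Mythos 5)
Your proposal is correct and follows exactly the route the paper itself prescribes: it states that the proof of Theorem~\ref{atcompactsubset} is obtained from that of Theorem~\ref{compactsubset} by replacing $2^\kappa$ with $\kappa$, constructing a $T$-free sequence, and invoking Proposition~\ref{noTfree} in place of Proposition~\ref{noSfree}. You have also correctly identified the one point that makes the arithmetic close at $\kappa$, namely the bound $|T(x)|\leq\kappa$ from Proposition~\ref{atsaturated}.
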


\begin{lemma}[Lemma 2.4.3 in \cite{Ridderbos2007}]\label{char}
Let $X$ be a space. Suppose $F$ is a compact subset of $Y$ and $Y$ is a compact subset of $X$. Then $\chi(F,X)\leq\chi(F,Y)\chi(Y,X)$.
\end{lemma}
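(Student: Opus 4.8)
The plan is to construct an explicit neighbourhood base of $F$ in $X$ of size at most $\chi(F,Y)\chi(Y,X)$ out of a neighbourhood base of $F$ in $Y$ and a neighbourhood base of $Y$ in $X$. Write $\lambda=\chi(F,Y)$ and $\mu=\chi(Y,X)$, and fix a neighbourhood base $\{W_\beta:\beta<\mu\}$ of $Y$ in $X$ consisting of open subsets of $X$. The naive candidate is the family $\{O_\alpha\meet W_\beta:\alpha<\lambda,\ \beta<\mu\}$, where each $O_\alpha$ is open in $X$ and $\{O_\alpha\meet Y:\alpha<\lambda\}$ is a neighbourhood base of $F$ in $Y$. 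Each such set is open in $X$ and contains $F$, and there are at most $\lambda\mu$ of them, so the cardinality estimate is immediate. The whole content of the lemma is thus the verification that this family is a base at $F$, and the difficulty is that an arbitrary open representative $O_\alpha$ of the relatively open set $O_\alpha\meet Y$ can ``leak'' near points of $Y$ lying outside a given target neighbourhood, so that $O_\alpha\meet W_\beta$ need not sit inside it. The representatives must therefore be chosen with care.

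The key step is to pin down the $O_\alpha$ so that their closures meet $Y$ in a controlled fashion; this is where compactness enters, and I will work under the Hausdorff hypothesis present in every application of this lemma. Since $Y$ is then a regular compactum, $F$ has a neighbourhood base $\{C_\alpha:\alpha<\lambda\}$ in $Y$ consisting of closed sets with $F\sse\mathrm{int}_Y C_\alpha$ for each $\alpha$. For each $\alpha$ the sets $F$ and $Y\minus\mathrm{int}_Y C_\alpha$ are disjoint compacta of $X$, and in a Hausdorff space disjoint compacta are separated by disjoint open sets $P_\alpha\supseteq F$ and $Q_\alpha\supseteq Y\minus\mathrm{int}_Y C_\alpha$. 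Put $O_\alpha=P_\alpha$. Because $Q_\alpha$ is an open neighbourhood of every point of $Y\minus\mathrm{int}_Y C_\alpha$ missing $P_\alpha$, no such point lies in $\closure{O_\alpha}$, so $\closure{O_\alpha}\meet Y\sse\mathrm{int}_Y C_\alpha\sse C_\alpha$; since $\{C_\alpha\}$ is cofinal among neighbourhoods of $F$ in $Y$, the family $\{O_\alpha\meet Y\}$ is indeed a neighbourhood base of $F$ in $Y$.

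Finally I would verify the base property by a tube-type argument. Given an open $G\sse X$ with $F\sse G$, the set $G\meet Y$ is a neighbourhood of $F$ in $Y$, so there is $\alpha$ with $C_\alpha\sse G$; then $O_\alpha\meet Y\sse\mathrm{int}_Y C_\alpha\sse G$ and $\closure{O_\alpha}\meet(Y\minus G)=\es$. Hence $Y\sse G\un(X\minus\closure{O_\alpha})$, which is open, and the choice of $\{W_\beta\}$ yields $\beta$ with $Y\sse W_\beta\sse G\un(X\minus\closure{O_\alpha})$. Since $O_\alpha\meet(X\minus\closure{O_\alpha})=\es$, we obtain $O_\alpha\meet W_\beta\sse O_\alpha\meet G\sse G$, so $O_\alpha\meet W_\beta$ is a member of our family contained in $G$, as required. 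The main obstacle is exactly the construction in the middle paragraph: the naive product base fails, and it is the separation of the disjoint compacta $F$ and $Y\minus\mathrm{int}_Y C_\alpha$, together with the regularity of the compactum $Y$, that forces each representative's closure to remain inside a prescribed neighbourhood and thereby makes the tube argument close.
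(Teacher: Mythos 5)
Your proof is correct, and the paper offers no proof of its own here --- the lemma is quoted verbatim from Ridderbos's thesis --- so the only comparison to make is that yours is the standard argument: refine the relative base of $F$ in $Y$ to open sets $O_\alpha$ of $X$ with $\closure{O_\alpha}\meet Y$ controlled, then intersect with a base of $Y$ in $X$ chosen inside the open set $G\un(X\minus\closure{O_\alpha})$. The one point to flag is that you import a Hausdorff hypothesis that the statement does not contain (and the paper explicitly disclaims implicit separation axioms): both the separation of the disjoint compacta $F$ and $Y\minus\mathrm{int}_Y C_\alpha$ and the regularity of the compactum $Y$ require it. Since $X$ is Hausdorff in every application of the lemma in this paper, the restriction is harmless in context, but strictly speaking your argument proves only the Hausdorff case of the stated lemma.
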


\begin{definition}\label{pct}
The \emph{point-wise compactness type} $pct(X)$ of a space $X$ is the least infinite cardinal $\kappa$ such that $X$ can be covered by compact sets $K$ such that $\chi(K,X)\leq\kappa$.
\end{definition}

Note that if $X$ is compact then $pct(X)=\omega$. Furthermore, it can be shown that $\chi(X)=\psi(X)pct(X)$.

\begin{theorem}\label{compactsubset2}
If $X$ is a Hausdorff space and $wt(X)pct(X)\leq\kappa$, then there exists a nonempty compact set $G\sse X$ and $H\sse X$ such that $\chi(G,X)\leq\kappa$, $G\sse\closure{H}$, $|H|\leq 2^\kappa$, and $H$ is $\scr{C}$-saturated in any cover $\scr{C}$ witnessing $wt(X)\leq\kappa$.
\end{theorem}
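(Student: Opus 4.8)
The plan is to deduce this statement from Theorem~\ref{compactsubset} together with Lemma~\ref{char}, using the hypothesis $pct(X)\leq\kappa$ solely to promote the relative character $\chi(G,K)\leq\kappa$ produced by Theorem~\ref{compactsubset} to the absolute character $\chi(G,X)\leq\kappa$. No genuinely new construction is needed: the essential combinatorial work (the $S$-free sequence argument) already resides in Theorem~\ref{compactsubset}, and the present statement is in effect a corollary of it.

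First I would record that $wt(X)pct(X)\leq\kappa$ yields both $wt(X)\leq\kappa$ and $pct(X)\leq\kappa$, and fix a cover $\scr{C}$ of $X$ witnessing $wt(X)\leq\kappa$. Assuming $X\neq\es$ (the empty case being vacuous), Definition~\ref{pct} together with $pct(X)\leq\kappa$ supplies a nonempty compact set $K\sse X$ with $\chi(K,X)\leq\kappa$. I would then apply Theorem~\ref{compactsubset} to this compact subset $K$, obtaining a nonempty closed set $G\sse K$ and a set $H\sse X$ with $|H|\leq 2^\kappa$, $G\sse\closure{H}$, $\chi(G,K)\leq\kappa$, and $H$ being $\scr{C}$-saturated. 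Here one uses that the proof of Theorem~\ref{compactsubset} depends only on having a fixed cover witnessing $wt(X)\leq\kappa$ and on the defining properties of the associated sets $S(x)$, so it remains valid even when $wt(X)<\kappa$.

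To conclude, I would observe that since $X$ is Hausdorff and $G$ is a closed subset of the compact set $K$, the set $G$ is itself compact; thus $G\sse K$ and $K\sse X$ are inclusions of compacta. Lemma~\ref{char} applied with $F=G$ and $Y=K$ then gives $\chi(G,X)\leq\chi(G,K)\chi(K,X)\leq\kappa\cdot\kappa=\kappa$. Together with the properties $G\sse\closure{H}$, $|H|\leq 2^\kappa$, and the $\scr{C}$-saturation of $H$ inherited directly from Theorem~\ref{compactsubset}, this establishes every required conclusion.

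I expect no serious obstacle, as this is essentially an assembly of earlier results. The only points demanding care are verifying that $G$ is genuinely compact (so that Lemma~\ref{char} is applicable, which is where Hausdorffness of $X$ enters) and confirming that Theorem~\ref{compactsubset} may legitimately be invoked under the weaker hypothesis $wt(X)\leq\kappa$ rather than strict equality. The entire function of $pct(X)$ in the argument is to bridge from character measured inside the compact set $K$ to character measured inside the ambient space $X$.
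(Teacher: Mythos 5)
Your proposal is correct and coincides with the paper's own proof: the paper likewise extracts a compact $K$ with $\chi(K,X)\leq\kappa$ from $pct(X)\leq\kappa$, applies Theorem~\ref{compactsubset} to $K$, and finishes with Lemma~\ref{char} to get $\chi(G,X)\leq\chi(G,K)\cdot\chi(K,X)\leq\kappa$. Your added remarks (that $G$ is compact as a closed subset of $K$, and that Theorem~\ref{compactsubset} applies under $wt(X)\leq\kappa$) are accurate points of care that the paper passes over silently.
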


\begin{proof}
Since $pct(X)\leq\kappa$, there exists a nonempty compact set $K$ such that $\chi(K,X)\leq\kappa$. (In fact, there is a cover of such sets). By Theorem~\ref{compactsubset} there exists a nonempty compact set $G\sse K$ and a $\scr{C}$-saturated set $H\in[X]^{\leq 2^\kappa}$ such that $G\sse\closure{H}$ and $\chi(G,K)\leq\kappa$. Now, by Lemma~\ref{char}, $\chi(G,X)\leq\chi(G,K)\cdot\chi(K,X)=\kappa\cdot\kappa=\kappa$, as $G$ and $K$ are compact.
\end{proof}

We have a related result for when $at(X)pct(X)\leq\kappa$. Notice that $|H|\leq 2^\kappa$ in Theorem~\ref{compactsubset2} and $|H|\leq\kappa$ in Theorem~\ref{atcompactsubset2}. It's proof is similar to the above and uses Theorem~\ref{atcompactsubset}.

\begin{theorem}\label{atcompactsubset2}
If $X$ is a Hausdorff space and $at(X)pct(X)\leq\kappa$, then there exists a nonempty compact set $G\sse X$ and $H\sse X$ such that $\chi(G,X)\leq\kappa$, $G\sse\closure{H}$, $|H|\leq\kappa$, and $H$ is $\scr{C}$-saturated in any cover $\scr{C}$ witnessing $at(X)\leq\kappa$.
\end{theorem}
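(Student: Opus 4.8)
The plan is to mirror the proof of Theorem~\ref{compactsubset2} almost verbatim, replacing the appeal to Theorem~\ref{compactsubset} with an appeal to its almost-tightness analogue Theorem~\ref{atcompactsubset}. The only place where the two versions differ is the cardinality of the dense-generating set $H$, which is $2^\kappa$ in the weak-tightness setting and $\kappa$ in the almost-tightness setting; since this sharpening is precisely the improvement we wish to record, the argument carries over with the better bound.

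Concretely, I would first use the hypothesis $pct(X)\leq\kappa$ to extract a nonempty compact set $K\sse X$ with $\chi(K,X)\leq\kappa$, which is immediate from Definition~\ref{pct} (indeed there is a whole cover of $X$ by such sets). Because $X$ is Hausdorff and $at(X)\leq at(X)pct(X)\leq\kappa$, I can then apply Theorem~\ref{atcompactsubset} to the compact set $K$, obtaining a nonempty closed set $G\sse K$ and a set $H\sse X$ with $|H|\leq\kappa$ that is $\scr{C}$-saturated for any cover $\scr{C}$ witnessing $at(X)\leq\kappa$, and satisfying $G\sse\closure{H}$ and $\chi(G,K)\leq\kappa$. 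Since $K$ is compact in the Hausdorff space $X$ it is closed, so $G$, being closed in $K$, is itself compact; this produces the required nonempty compact $G$.

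Finally I would combine the two character estimates. As both $G$ and $K$ are compact, Lemma~\ref{char} gives $\chi(G,X)\leq\chi(G,K)\chi(K,X)\leq\kappa\cdot\kappa=\kappa$. Together with $G\sse\closure{H}$, the bound $|H|\leq\kappa$, and the $\scr{C}$-saturation of $H$, this delivers every conclusion in the statement. I do not anticipate any genuine obstacle here: the substantive work is already encapsulated in Theorem~\ref{atcompactsubset}, and the only point requiring care is the routine observation that the compactness of both $G$ and $K$ is exactly what lets the product bound of Lemma~\ref{char} collapse back to $\kappa$.
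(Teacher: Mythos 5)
Your proposal is correct and follows exactly the route the paper intends: the paper gives no separate proof of Theorem~\ref{atcompactsubset2}, saying only that it is ``similar to the above and uses Theorem~\ref{atcompactsubset},'' and your argument is precisely the proof of Theorem~\ref{compactsubset2} with Theorem~\ref{atcompactsubset} substituted for Theorem~\ref{compactsubset}, including the application of Lemma~\ref{char} to compact $G\sse K$. The added remark that $G$ is compact because it is closed in the compact set $K$ is a correct and worthwhile detail that the paper leaves implicit.
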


The following theorem was established in \cite{Car2018}. It represents an improvement and generalization of the result of Pytkeev~\cite{Pyt1985} that $L(X_\kappa)\leq 2^{t(X)\cdot\kappa}$ for compact spaces $X$, where $X_\kappa$ is the $G_\kappa$-\emph{modification} of a space $X$. $X_\kappa$ is the space formed on the underlying set $X$ where the $G_\kappa$-sets of $X$ form a basis. Given a space $X$, $X^c_\kappa$ represents the $G^c_\kappa$-\emph{modification} of $X$, the space formed on $X$ where the $G^c_\kappa$-sets form a basis. 

\begin{theorem}[Main Theorem in \cite{Car2018}]\label{L(X)}
For any space $X$ and cardinal $\kappa$, $L(X^c_\kappa)\leq 2^{L(X)wt(X)\cdot\kappa}$.
\end{theorem}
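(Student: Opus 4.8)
I would set $\lambda=L(X)\,wt(X)\,\kappa$ and $\tau=wt(X)$; the aim is to show that every open cover of $X^c_\kappa$ admits a subcover of cardinality at most $2^\lambda$. Since the $G^c_\kappa$-sets form a basis for $X^c_\kappa$, it suffices to treat a cover $\scr{U}$ of $X$ consisting of $G^c_\kappa$-sets and to produce $\scr{V}\sse\scr{U}$ with $|\scr{V}|\leq 2^\lambda$ and $\Un\scr{V}=X$. I would write each $U\in\scr{U}$ as $U=\Meet_{\beta<\kappa}\closure{W^U_\beta}$ with $W^U_\beta$ open in $X$, and fix a cover $\scr{C}$ witnessing $wt(X)=\tau$ together with the $\scr{C}$-saturated sets $S(x)$, $|S(x)|\leq 2^\tau$, furnished by Proposition~\ref{saturated}.

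The bound I would verify by an elementary submodel argument. Choose a regular $\theta$ large enough that $X,\scr{U},\scr{C}$, the assignments $U\mapsto\langle W^U_\beta\rangle$ and $x\mapsto S(x)$, and $\kappa,\tau,\lambda$ all lie in $H(\theta)$, and take $M\prec H(\theta)$ containing these objects with $2^\lambda\sse M$, $|M|=2^\lambda$, and $M$ closed under $\leq\lambda$-sequences (possible since $(2^\lambda)^{\lambda}=2^\lambda$). The candidate subcover is $\scr{V}=\scr{U}\meet M$, of size at most $2^\lambda$. By elementarity every $x\in X\meet M$ lies in some member of $\scr{U}\meet M$, so $X\meet M\sse\Un(\scr{U}\meet M)$; moreover $2^\tau\sse M$ forces $S(x)\sse M$ for $x\in M$, so $D:=X\meet M$ equals $S(D)$ and is therefore $\scr{C}$-saturated. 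The whole problem reduces to showing that no point escapes, i.e. $X=\Un(\scr{U}\meet M)$.

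To that end I would take a putative $p\in X\minus\Un(\scr{U}\meet M)$ and localize $p$ through the weak-tightness data. Pick $C\in\scr{C}$ with $p\in C$; since $\scr{C}\sse M$ and $\scr{C}$-saturation gives $D\sse\closure{D\meet C}$, any convergence of $D$ toward $p$ can be traced inside the tight piece $C$. Using $t(C)\leq\tau$ one captures a set $B\in[D]^{\leq\tau}$ with $p\in\closure{B}$; because $M$ is closed under $\leq\lambda$-sequences and $\tau\leq\lambda$, such a $B$ and its $X$-closure lie in $M$. The decisive feature of the $G^c_\kappa$-modification now enters: membership of a point in a basic set $U$ requires only that it lie in the intersection of the $\kappa$ closures $\closure{W^U_\beta}$, which is exactly the kind of membership that closure operators produce. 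Combining this with $L(X)\leq\lambda$ to reflect into $M$ a $\leq\lambda$-sized subfamily of $\scr{U}$ handling $B$, and with Lemma~\ref{closureunion} to pass closures through the increasing $\scr{C}$-saturated chains assembled inside $M$, one forces $p$ into some member of $\scr{U}\meet M$, a contradiction.

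The main obstacle is precisely this last closure-capturing step. A $\kappa$-indexed union of $G^c_\kappa$-sets is itself an intersection of closures and need not be closed, so one cannot pass naively from ``$p$ lies in the closure of a captured set'' to ``$p$ lies in a captured basic set.'' This is the reason the argument must be organized around $\scr{C}$-saturated sets and Lemma~\ref{closureunion}, whose closure-of-union identity along chains of length $\tau^+$ is what converts approximate capture into genuine membership; it is also the structural reason that $wt(X)$ must be paired with the $G^c_\kappa$-modification rather than the ordinary $G_\kappa$-modification. Keeping the bookkeeping so that every auxiliary subfamily stays of size at most $2^\lambda$ (via $L(X)$) while the chains remain $\scr{C}$-saturated (via the $S(x)$) is the technical heart of the proof.
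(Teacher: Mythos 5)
The paper does not prove this statement; it is imported verbatim as the Main Theorem of \cite{Car2018}, so there is no internal proof to compare against. Judged on its own terms, your proposal has a genuine gap. The setup --- reducing to covers by $G^c_\kappa$-sets, building $M\prec H(\theta)$ with $2^\lambda\sse M$ closed under $\leq\lambda$-sequences, taking $\scr{V}=\scr{U}\meet M$, and checking that $D=X\meet M$ is $\scr{C}$-saturated --- is routine and correct, but the entire content of the theorem lies in the step you leave as a gesture: deriving a contradiction from $p\in X\minus\Un(\scr{U}\meet M)$. Two concrete problems. First, you assert that $t(C)\leq\tau$ ``captures'' some $B\in[D]^{\leq\tau}$ with $p\in\closure{B}$; this presupposes $p\in\closure{D\meet C}$, which you never establish and which need not hold --- there is no a priori reason that $p\in\closure{X\meet M}$, and handling the case $p\notin\closure{D}$ is exactly where $L(X)$ and the presentation $U=\Meet_{\beta<\kappa}\closure{W^U_\beta}$ must do real work (failure of $p\in\closure{W^U_\beta}$ for a single $\beta$ is what produces genuine open sets of $X$ to which $L(X)$ can be applied). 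Second, even granting $p\in\closure{B}$ with $B\in M$, you only know $B\sse\Un(\scr{U}\meet M)$; members of $\scr{U}$ are closed in $X$ but their $\leq\lambda$-sized unions are not, so $p\in\closure{B}$ yields only $p\in\closure{\Un\scr{V}'}$ for a reflected subfamily $\scr{V}'$, not $p\in\Un\scr{V}'$. You name this obstacle yourself (``one cannot pass naively\dots''), but the sentence ``one forces $p$ into some member of $\scr{U}\meet M$'' is precisely the theorem, not an argument for it. Moreover, Lemma~\ref{closureunion} applies to increasing $\kappa^+$-chains of $\scr{C}$-saturated sets, and no such chain appears anywhere in your submodel setup, so its invocation is unsupported.

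To close the gap you would need to use the parts of the hypotheses your sketch never touches: the clause $X=cl_{2^\tau}C$ in Definition~\ref{weakTightness} (each $C\in\scr{C}$ is dense in a strong, quantitative sense, which is what lets closure information about $D$ propagate to arbitrary points of $X$), and the interaction between the $\closure{W^U_\beta}$ data and $L(X)$ sketched above. As written, the proposal is an accurate diagnosis of why the theorem is delicate rather than a proof that it is true.
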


\begin{lemma}[Theorem 3.4, \cite{Rid2006}]\label{ridPH}
If $X$ is a power homogeneous Hausdorff space then $|X|\leq d(X)^{\pi\chi(X)}$.
\end{lemma}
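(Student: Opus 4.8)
The plan is to prove the homogeneous case first and then bootstrap to power homogeneity through the single-coordinate transfer machinery of Section 2, the whole point being to keep the relevant exponent equal to $\pi\chi(X)$ rather than letting it blow up. Throughout set $\tau=\pi\chi(X)$ and $d=d(X)$, and fix a dense set $D\sse X$ with $|D|\leq d$.

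For a \emph{homogeneous} Hausdorff space I would argue as follows. Fix a base point $e$ and a local $\pi$-base $\{B_\alpha:\alpha<\tau\}$ at $e$. For each $x$ choose a homeomorphism $h_x$ with $h_x(e)=x$; then $\{h_x[B_\alpha]:\alpha<\tau\}$ is a local $\pi$-base at $x$. Choosing $\phi_x(\alpha)\in h_x[B_\alpha]\meet D$ (nonempty since $D$ is dense) produces a set $S_x=\{\phi_x(\alpha):\alpha<\tau\}\in[D]^{\leq\tau}$ with $x\in\closure{S_x}$, since every neighborhood of $x$ contains some $h_x[B_\alpha]$ and hence the corresponding $\phi_x(\alpha)$. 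This represents every point of $X$ as a limit of a $\leq\tau$-sized subset of $D$, and as $|[D]^{\leq\tau}|\leq d^\tau$ the desired bound follows once one controls how many points can share such a witness. Here Hausdorffness enters: given $x\neq y$ one separates them by disjoint open sets and uses the transported $\pi$-base to locate coordinates on which the representations are forced to differ.

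To pass from homogeneous to \emph{power} homogeneous one cannot simply apply the above to $Z=X^\kappa$ (all factors equal to $X$), because although $Z$ is homogeneous and $d(Z)\leq d$ whenever $\kappa\leq 2^{d}$ by Hewitt--Marczewski--Pondiczery, one has $\pi\chi(Z)=\kappa\cdot\tau$, which ruins the exponent. The remedy is to transport the local $\pi$-base structure across $X$ one coordinate at a time, exactly as in Theorem~\ref{getA2kappa} and Theorem~\ref{AVR}: these require only $\pi\chi(X_j)\leq\tau$ for a \emph{single} factor $j$, not $\pi\chi(Z)$. Thus, moving a size-$\tau$ local $\pi$-base at a base point of the $j$-th copy to an arbitrary target by a homeomorphism of $Z$, and absorbing that homeomorphism into a $\leq\tau$-sized index set $A$ via Theorem~\ref{getA2kappa} and Lemma~\ref{S(A)}, yields at every $z\in X$ a local $\pi$-base of size $\leq\tau$ whose members meet a fixed $\leq\tau$-sized subset of $X$. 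This reinstates the hypotheses of the homogeneous argument with exponent $\tau$, and the encoding-and-counting step then gives $|X|\leq d(X)^{\pi\chi(X)}$.

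The hard part is the fiber control in the encoding step: the raw closure $\closure{S_x}$ of a $\leq\tau$-sized set can be large, so representing each point merely as ``a limit of some member of $[D]^{\leq\tau}$'' is too lossy to produce $d^\tau$, and recording full traces $h_x[B_\alpha]\meet D$ only yields the weaker $2^{d\tau}$. The decisive input is that homogeneity, through the coordinatewise transfer, furnishes a \emph{coherent} family of local $\pi$-bases indexed uniformly by $\tau$, against which Hausdorff separation can be applied index by index; making this separation pin each point down tightly enough to bound the fibers by $d^\tau$ is the technical heart of the proof, and is exactly the step where both Hausdorffness and the single-factor nature of the $\pi$-character estimate are indispensable.
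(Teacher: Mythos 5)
This lemma is quoted from Ridderbos \cite{Rid2006} and the paper gives no proof of it, so there is nothing internal to compare your route against; judging the proposal on its own terms, it has a genuine gap at exactly the step you defer. Producing, for each $x$, a set $S_x\in[D]^{\leq\tau}$ (or a function $\phi_x:\tau\to D$) by choosing points of $D$ in the transported $\pi$-base members $h_x[B_\alpha]$ is the easy part; the entire content of the theorem is the injectivity (or $\leq d^\tau$-to-one) of such a coding, which you label ``the technical heart'' and never carry out. Worse, the mechanism you propose for it --- Hausdorff separation played off against the coherent $\tau$-indexing --- cannot suffice: if $x\neq y$ are separated by disjoint open $U,V$, you get some $\alpha$ with $h_x[B_\alpha]\sse U$ and some $\beta$ with $h_y[B_\beta]\sse V$, but nothing forces $\alpha=\beta$, and since a local $\pi$-base member need not have $x$ in its closure, the chosen points $\phi_x(\alpha)$ need not lie near $x$ at all. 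The space $\beta\omega$ shows the obstruction concretely: it is compact Hausdorff, every point has the \emph{same} countable local $\pi$-base $\{\{n\}:n<\omega\}$ drawn from the countable dense set $\omega$, so the coherently indexed data you build is identical at every point, yet $|\beta\omega|=2^{\mathfrak{c}}>\omega^{\omega}$. Hence ``uniformly indexed small $\pi$-bases meeting a small dense set, plus Hausdorffness'' does not bound fibers by $d^\tau$; homogeneity must be invoked a second time, inside the counting argument itself, and identifying how is precisely what Ridderbos's proof accomplishes and what your sketch omits. (A workable target, for instance, is to attach to each $x$ a $\tau$-sized family of $\tau$-sized subsets of $D$, each with $x$ in its closure and forming a $\pi$-network at $x$; such data does separate points under Hausdorffness and gives $|X|\leq d^\tau$, but constructing it from homogeneity is nontrivial and is not what your $S_x$ provides.)

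Your reduction of the power homogeneous case to a single coordinate via Theorem~\ref{getA2kappa} and Lemma~\ref{S(A)}, keeping the exponent at $\pi\chi(X_j)$ rather than $\pi\chi(X^\kappa)$, is the right strategic instinct and matches the spirit of the Section~2 machinery; but it only reinstates the hypotheses of the homogeneous-case argument, so it inherits the missing step rather than repairing it.
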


We now have all the ingredients to prove the central theorem in this section. It ties together the main result in the previous section with results in this section.

\begin{theorem}\label{cptPH}
If $X$ is a power homogeneous compactum then $|X|\leq 2^{at(X)\pi\chi(X)}$.
\end{theorem}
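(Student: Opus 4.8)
The plan is to fix $\kappa=at(X)\pi\chi(X)$ and to manufacture a dense subset of $X$ of cardinality at most $2^\kappa$, after which Lemma~\ref{ridPH} closes the argument. Note first that $at(X)\leq\kappa$, $\pi\chi(X)\leq\kappa$, and $wt(X)\leq at(X)\leq\kappa$, and that since $X$ is a compactum we have $pct(X)=\omega$, so that $at(X)pct(X)=at(X)\leq\kappa$.

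First I would apply Theorem~\ref{atcompactsubset2} to obtain a nonempty compact set $G_0\subseteq X$ and a set $H_0\in[X]^{\leq\kappa}$ with $G_0\subseteq\overline{H_0}$ and $\chi(G_0,X)\leq\kappa$. Because $G_0$ is compact and $X$ is Hausdorff, the condition $\chi(G_0,X)\leq\kappa$ forces $G_0$ to be a $G_\kappa$-set, and a compact $G_\kappa$-set in a Hausdorff space is a $G^c_\kappa$-set (as observed in Section 2). Thus $G_0$ is a nonempty $G^c_\kappa$-set lying in the closure of a set of size at most $\kappa$, which is exactly the hypothesis of Corollary~\ref{GPH}. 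The essential feature exploited here is that the $at(X)$-version, Theorem~\ref{atcompactsubset2}, delivers $|H_0|\leq\kappa$ rather than the $|H_0|\leq 2^\kappa$ coming from $wt(X)$; the larger bound would be incompatible with Corollary~\ref{GPH}.

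Next I would invoke Corollary~\ref{GPH}, using that $X$ is power homogeneous and Hausdorff, to obtain a cover $\scr{G}$ of $X$ by $G^c_\kappa$-sets such that each $G\in\scr{G}$ satisfies $G\subseteq\overline{H_G}$ for some $H_G\in[X]^{\leq\kappa}$. The key reinterpretation is that $\scr{G}$ is then an open cover of the $G^c_\kappa$-modification $X^c_\kappa$. Since $X$ is compact we have $L(X)=\omega$, and $wt(X)\leq\kappa$, so Theorem~\ref{L(X)} gives $L(X^c_\kappa)\leq 2^{L(X)wt(X)\kappa}=2^\kappa$. Hence $\scr{G}$ admits a subcover $\scr{G}'$ with $|\scr{G}'|\leq 2^\kappa$. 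Setting $D=\Un_{G\in\scr{G}'}H_G$, we get $|D|\leq 2^\kappa\cdot\kappa=2^\kappa$, and since $X=\Un\scr{G}'\subseteq\Un_{G\in\scr{G}'}\overline{H_G}\subseteq\overline{D}$, the set $D$ is dense, so $d(X)\leq 2^\kappa$. Finally, Lemma~\ref{ridPH} yields $|X|\leq d(X)^{\pi\chi(X)}\leq(2^\kappa)^{\pi\chi(X)}=2^{\kappa\pi\chi(X)}$, and $\kappa\pi\chi(X)=at(X)\pi\chi(X)=\kappa$, giving $|X|\leq 2^{at(X)\pi\chi(X)}$.

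I expect the only real subtlety, and the whole reason the argument succeeds, to be the bookkeeping that simultaneously keeps the relevant set a $G^c_\kappa$-set and keeps its dense witness of size at most $\kappa$ at every stage. This is precisely what forces $at(X)$ in place of $wt(X)$: the $wt(X)$ route would only produce a witness of size $2^\kappa$, which breaks the $G^c_\kappa$-control required to feed Corollary~\ref{GPH}. Everything else is a direct assembly of Theorem~\ref{atcompactsubset2}, Corollary~\ref{GPH}, Theorem~\ref{L(X)}, and Lemma~\ref{ridPH}, together with the cardinal arithmetic identity $\kappa\pi\chi(X)=\kappa$.
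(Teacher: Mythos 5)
Your proposal is correct and follows essentially the same route as the paper's own proof: Theorem~\ref{atcompactsubset2} (exploiting $pct(X)=\omega$ for compacta) to get a compact $G^c_\kappa$-set inside the closure of a $\kappa$-sized set, Corollary~\ref{GPH} to spread this to a cover, Theorem~\ref{L(X)} to extract a subcover of size $2^\kappa$ yielding $d(X)\leq 2^\kappa$, and Lemma~\ref{ridPH} to finish. Your remark that the $\kappa$-sized (rather than $2^\kappa$-sized) witness is the crucial point enabling $at(X)$ to replace $wt(X)$ is exactly the observation the paper makes after Corollary~\ref{GPH}.
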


\begin{proof}
Let $\kappa=at(X)\pi\chi(X)$. Let $\scr{C}$ be a cover witnessing that $at(X)\leq\kappa$. By Theorem~\ref{atcompactsubset2} there exists a nonempty compact set $G$ and a set $H\in[X]^{\leq\kappa}$ such that $H$ is $\scr{C}$-saturated, $G\sse\closure{H}$, and $\chi(G,X)\leq\kappa$. As $G$ is compact, it follows that $G$ is a $G^c_\kappa$-set. Since $X$ is power homogeneous and $\pi\chi(X)\leq\kappa$, by Corollary~\ref{GPH} there exists a cover $\scr{G}$ of $X$ of nonempty $G^c_\kappa$-sets such that for each $G\in\scr{G}$ there exists $H_G\in[X]^{\leq\kappa}$ such that $G\sse\closure{H_G}$. As $wt(X)\leq at(X)$, Theorem~\ref{L(X)} allows us to assume that $|\scr{G}|\leq 2^\kappa$. Then $X=\Un\scr{G}\sse\Un_{G\in\scr{G}}\closure{H_G}\sse\closure{\Un_{G\in\scr{G}}H_G}$, showing $D=\Un_{G\in\scr{G}}H_G$ is dense. As $|D|\leq 2^\kappa\cdot\kappa=2^\kappa$, we have $d(X)\leq 2^\kappa$. By Lemma~\ref{ridPH}, it follows that $|X|\leq d(X)^{\pi\chi(X)}\leq (2^{\kappa})^\kappa=2^\kappa$, completing the proof.
\end{proof}

Theorem~\ref{cptPH} improves the result in \cite{AVR2007} that the cardinality of a power homogeneous compactum is at most $2^{t(X)}$, as $\pi\chi(X)\leq t(X)$ for a compactum $X$ and $at(X)\leq t(X)$ for any space. It also gives a partial answer to Question 3.13 in \cite{BC2020}, which asks if $|X|\leq 2^{wt(X)\pi\chi(X)}$ for a power homogeneous compactum.  

Notice that the fact that $\pi\chi(X)\leq\kappa$ is used twice in the above proof: once in the use of Corollary~\ref{GPH} and again in the use of Lemma~\ref{ridPH}. Thus it would seem to be difficult to remove this invariant from the cardinality bound $2^{at(X)\pi\chi(X)}$ for power homogeneous compacta.

As Theorem~\ref{atcompactsubset2} applies if $at(X)pct(X)\leq\kappa$ and Theorem~\ref{L(X)} applies if $L(X)at(X)\leq\kappa$, 
observe that the previous proof in fact demonstrates the following more general result. 

\begin{theorem}\label{PHbound}
Let $X$ be a power homogeneous Hausdorff space. Then $|X|\leq 2^{L(X)at(X)\pi\chi(X)pct(X)}$.
\end{theorem}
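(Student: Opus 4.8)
The plan is to follow the proof of Theorem~\ref{cptPH} almost verbatim, replacing the global compactness hypothesis by boundedness of the relevant cardinal functions and tracking precisely where each auxiliary result is applied. Set $\kappa=L(X)at(X)\pi\chi(X)pct(X)$; I will show $|X|\leq 2^\kappa$. Since the final bound comes from $d(X)$ through Lemma~\ref{ridPH}, the whole task reduces to establishing $d(X)\leq 2^\kappa$.

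First I would invoke Theorem~\ref{atcompactsubset2}, which is legitimate because $at(X)pct(X)\leq\kappa$. This yields a nonempty compact set $G\sse X$ and a $\scr{C}$-saturated set $H\in[X]^{\leq\kappa}$ with $G\sse\closure{H}$ and $\chi(G,X)\leq\kappa$. The step that replaces the bare ``$G$ is compact'' remark of the compact case is this: because $\chi(G,X)\leq\kappa$ and $G$ is compact (hence closed) in the Hausdorff space $X$, $G$ is the intersection of $\kappa$-many members of a neighborhood base, so $G$ is a compact $G_\kappa$-set, and hence a $G^c_\kappa$-set by the observation recorded just after Theorem~\ref{AVR}.

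Next, since $X$ is power homogeneous and $\pi\chi(X)\leq\kappa$, Corollary~\ref{GPH} furnishes a cover $\scr{G}$ of $X$ by nonempty $G^c_\kappa$-sets, each satisfying $G\sse\closure{H_G}$ for some $H_G\in[X]^{\leq\kappa}$. To control $|\scr{G}|$ I would pass to the $G^c_\kappa$-modification $X^c_\kappa$, in which the members of $\scr{G}$ are basic open and therefore form an open cover. Here Theorem~\ref{L(X)} applies: since $L(X)\leq\kappa$ and $wt(X)\leq at(X)\leq\kappa$, the exponent collapses, $L(X)wt(X)\cdot\kappa=\kappa$, giving $L(X^c_\kappa)\leq 2^{L(X)wt(X)\cdot\kappa}=2^\kappa$. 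Thinning $\scr{G}$ to a subcover of $X^c_\kappa$, I may assume $|\scr{G}|\leq 2^\kappa$.

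Finally, $X=\Un\scr{G}\sse\Un_{G\in\scr{G}}\closure{H_G}\sse\closure{\Un_{G\in\scr{G}}H_G}$, so $D=\Un_{G\in\scr{G}}H_G$ is dense with $|D|\leq 2^\kappa\cdot\kappa=2^\kappa$, whence $d(X)\leq 2^\kappa$; Lemma~\ref{ridPH} then gives $|X|\leq d(X)^{\pi\chi(X)}\leq(2^\kappa)^\kappa=2^\kappa$. The only genuinely new verifications beyond Theorem~\ref{cptPH} are that $G$ is a $G^c_\kappa$-set without compactness of the ambient space (handled through $\chi(G,X)\leq\kappa$ together with the Hausdorff observation) and that the exponent in Theorem~\ref{L(X)} reduces to $\kappa$. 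I expect this second piece of bookkeeping --- confirming that every invariant appearing in the bound $2^{L(X)wt(X)\kappa}$ is dominated by $\kappa$ --- to be the point needing the most care, though it is no real obstacle, since each of the four factors defining $\kappa$ was chosen precisely to absorb one of the quantities on which Theorems~\ref{atcompactsubset2} and~\ref{L(X)} and Corollary~\ref{GPH} depend.
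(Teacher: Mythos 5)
Your proposal is correct and is essentially the paper's own argument: the paper proves Theorem~\ref{PHbound} by observing that the proof of Theorem~\ref{cptPH} goes through verbatim once $L(X)$ and $pct(X)$ are absorbed into $\kappa$, which is exactly the bookkeeping you carry out. Your added verification that $G$ is a $G^c_\kappa$-set via $\chi(G,X)\leq\kappa$ and the Hausdorff observation after Theorem~\ref{AVR} is the right way to fill in the one step the compact case got for free.
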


A result in \cite{Car2018} (given in Theorem~\ref{previousPH} below) is a consequence of Theorem~\ref{cptPH}. The next several results establish the reasons for this. Recall that a space is $\sigma$-CT if it is a countable union of countably tight subspaces. Observe that an almost countably tight space is $\sigma$-CT. The following was shown by \juhasz~and van Mill in \cite{JVM2018}.

\begin{theorem}[Lemma 2.4 in \cite{JVM2018}]\label{closedsubset}
If $X$ is a $\sigma$-CT compactum then every nonempty closed subspace $F$ has a point $x\in F$ such that $\pi\chi(x,F)\leq\omega$.
\end{theorem}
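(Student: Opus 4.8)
The plan is to first reduce to the space itself and then isolate the genuinely hard core as a lemma about compacta carrying a dense countably tight subspace. Since tightness is hereditary, if $X=\Un_{n<\ww}D_n$ with $t(D_n)\le\ww$ for each $n$, then for any nonempty closed $F\sse X$ we have $F=\Un_{n<\ww}(D_n\meet F)$ with $t(D_n\meet F)\le\ww$, so $F$ is again a $\sigma$-CT compactum. Hence it suffices to show that an arbitrary nonempty $\sigma$-CT compactum has a point of countable $\pi$-character, and then apply this to $F$ in place of $X$.

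So let $F=\Un_{n<\ww}D_n$ be a $\sigma$-CT compactum. Being compact Hausdorff, $F$ is Baire, and $F=\Un_{n<\ww}\closure{D_n}$; hence some $\closure{D_{n_0}}$ has nonempty interior $W$ in $F$. Put $Z=\closure{W}$ and $D=D_{n_0}\meet W$. Then $Z$ is a compactum, $D$ is dense in $Z$, and $t(D)\le t(D_{n_0})\le\ww$. Moreover $W$ is open in $F$ and, as $W\sse Z\sse F$, also open in $Z$; since $\pi$-character is a local invariant, $\picharp{x}{F}=\picharp{x}{Z}$ for every $x\in W$. Thus everything reduces to the following lemma, applied with the open set $W$:

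\medskip
\emph{Key Lemma.} If $Z$ is a compactum possessing a dense subspace $D$ with $t(D)\le\ww$, then every nonempty open subset of $Z$ contains a point of countable $\pi$-character.
\medskip

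To prove the Key Lemma I would run a \sapirovskii-style construction, building inside a given nonempty open $U\sse Z$ a free sequence $\{x_\alpha\}\sse D\meet U$ together with a nested family of closed sets $\{F_\alpha\}$ capturing the tails, the intent being that once the construction can no longer be extended, the countably many open sets accumulated so far form a local $\pi$-base at a suitable accumulation point. Density of $D$ is what allows each new $x_\alpha$ to be chosen inside $D$, while compactness of $Z$ produces, at limit stages and at the end, a point of $\Meet_\alpha F_\alpha\neq\es$.

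The main obstacle—and the reason this goes beyond the classical fact $\pichar{Z}\le t(Z)$ for countably tight compacta—is controlling the length of the construction. One would like to argue that a free sequence of length $\ws$ inside the countably tight set $D$ is impossible; but a free sequence of length $\ws$ can perfectly well live in a countably tight \emph{non-compact} space (for instance the ordinal space $\ws$ is first countable yet carries a free sequence of length $\ws$), so countable tightness of $D$ alone does not bound the length. The difficulty is that the relevant accumulation point $p\in\Meet_\alpha\closure{\{x_\beta:\beta\ge\alpha\}}$ comes from compactness of $Z$ and may a priori lie outside $D$, where $t(D)\le\ww$ gives no leverage. The crux is therefore to thread the construction so that this accumulation point can be taken in $D$: then, writing $S=\{x_\alpha\}\sse D$, countable tightness of $D$ yields a countable $B\sse S$ with $p\in\closure{B}$, while $B$ is contained in some initial segment whose closure is disjoint from $p$ by freeness—a contradiction forcing termination at a countable stage and delivering the countable local $\pi$-base. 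Securing $p\in D$ (equivalently, arranging the nested tail-closures to meet the dense set $D$ in their intersection) is the delicate step; the surrounding bookkeeping and the final localization of $\pi$-character to $W$ are then routine.
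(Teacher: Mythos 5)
Your preliminary reductions are fine and are indeed how one would begin: closed subspaces of $\sigma$-CT compacta are $\sigma$-CT (tightness is hereditary), the Baire property of $F$ yields an open $W\sse\closure{D_{n_0}}$, and $\pi$-character localizes to the open set $W$. (Note the paper itself offers no proof here --- it imports the statement as Lemma 2.4 of \cite{JVM2018} --- so the comparison is with that source, which also starts this way.) The problem is what you reduce \emph{to}: your Key Lemma is false. Take $Z=2^{\omega_1}$ and let $D$ be a countable dense subset (which exists by Hewitt--Marczewski--Pondiczery, since $\omega_1\leq\mathfrak{c}$); then $D$ is trivially countably tight and dense in the compactum $Z$, yet \emph{no} point of $Z$ has countable $\pi$-character: given countably many nonempty open sets, shrink each to a basic open set, note that their supports lie in a countable set $A$ of coordinates, pick $\gamma\notin A$, and observe that the subbasic neighborhood $\{z:z(\gamma)=x(\gamma)\}$ of $x$ contains none of them. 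So ``compactum with a dense countably tight subspace'' is strictly weaker than what you need, and no amount of care in the free-sequence construction can rescue a false statement. (Consistently with this, $2^{\omega_1}$ is not $\sigma$-CT, by the very theorem you are trying to prove.)

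The information your reduction discards is exactly the essential hypothesis: after passing to $Z=\closure{W}$ you still have $Z=\Un_n(D_n\meet Z)$ with \emph{every} point of $Z$ --- not just a dense set of points --- lying in some countably tight piece. That is what makes the Juh\'asz--van Mill argument work: the complete accumulation point $p$ of the free sequence you build, which you correctly identify as the crux, cannot be forced into the single dense piece $D$ (the $2^{\omega_1}$ example shows the ``mass'' of the space may live entirely outside $D$), but it necessarily lands in \emph{some} $D_m$, and the argument must exploit all of the pieces simultaneously (together with \sapirovskii's machinery relating $\pi$-character to free sequences) rather than only the one made dense by the Baire step. As written, the proposal proves the easy reductions, replaces the theorem by a false lemma, and explicitly leaves unexecuted the one step that carries the mathematical content; it is therefore not a proof.
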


\begin{theorem}[Lemma 4.2 in \cite{Car2018}]\label{closedsubset2}
Let $X$ be a power homogeneous compactum and let $\kappa$ be an infinite cardinal. If for all nonempty closed subsets $F\sse X$ there exists $x\in F$ such that $\pi\chi(x,F)\leq\kappa$, then $\pi\chi(X)\leq\kappa$.
\end{theorem}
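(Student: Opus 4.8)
The plan is to reduce to the homogeneous case by passing to a homogeneous power and then to localize the resulting product argument so that the size of the exponent does not spoil the bound. Since $X$ is power homogeneous, fix a cardinal $\lambda$ with $X^\lambda$ homogeneous; as $(X^n)^\omega=X^\omega$ is homogeneous whenever $X^n$ is, we may assume $\lambda\geq\omega$. Write points of $X^\lambda$ as $\bar x=(\bar x_\alpha)_{\alpha<\lambda}$. The first fact I would record is the product formula for the local $\pi$-character, $\pi\chi(\bar x,X^\lambda)=\sum_{\alpha<\lambda}\pi\chi(\bar x_\alpha,X)$, together with the observation that homogeneity of $X^\lambda$ makes this value independent of $\bar x$; call it $\mu=\pi\chi(X^\lambda)$. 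Applying the hypothesis to the closed set $F=X$ produces a \emph{seed}: a point $x_0\in X$ with $\pi\chi(x_0,X)\leq\kappa$, from which I would try to propagate the bound to all of $X$.

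Evaluating the product formula on the diagonal points $(x,x,x,\dots)$ gives $\mu=\max(\lambda,\pi\chi(x,X))$ for \emph{every} $x\in X$, since $\lambda$ is infinite. This already forces a dichotomy. If $\mu>\lambda$, then $\max(\lambda,\pi\chi(x,X))=\mu$ forces $\pi\chi(x,X)=\mu$ for all $x$, so the local $\pi$-character is constant; the seed then pins the common value to $\mu=\pi\chi(x_0,X)\leq\kappa$, whence $\pi\chi(X)=\mu\leq\kappa$. Likewise, if $\mu=\lambda\leq\kappa$, then the diagonal point $(x_0,x_0,\dots)$ witnesses $\pi\chi(X^\lambda)\leq\kappa$, and since $\pi\chi(y,X)\leq\pi\chi((y,y,\dots),X^\lambda)$ (the projection $\pi_0$ is open and continuous), we again obtain $\pi\chi(X)\leq\kappa$. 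Thus the \emph{only} problematic case is $\mu=\lambda>\kappa$, where the formula forces $\pi\chi(X^\lambda)\geq\lambda$ and homogeneity of $X^\lambda$ can therefore never bound a local $\pi$-character below $\lambda$.

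The case $\lambda>\kappa$ is the genuine obstacle, and it is precisely here that the \emph{full} strength of the hypothesis --- a point of small $\pi$-character in \emph{every} nonempty closed subset, not merely in $X$ --- must be brought to bear, and where the coordinate-control machinery of Section~2 enters. The idea I would pursue is that for a homeomorphism $h$ of $X^\lambda$ carrying a diagonal seed point to a point lying over a prescribed $y\in X$, the analog of Theorem~\ref{getA2kappa} supplies a set $A$ of at most $\kappa$ coordinates such that the relevant coordinate of $h(s)$ is determined by $s\restriction A$. Reading a local $\pi$-base at $y$ off these $\leq\kappa$ coordinates, and invoking the hypothesis on the closures of the associated coordinate traces to keep each coordinate's contribution of size $\leq\kappa$, should assemble a local $\pi$-base at $y$ of size $\leq\kappa$, thereby circumventing the $\lambda$-blowup in the product.

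Finally I would package the $\lambda>\kappa$ case as a proof by contradiction in the style of Propositions~\ref{noSfree} and~\ref{noTfree}: assuming $\pi\chi(X)>\kappa$, recursively use the hypothesis to select good points in a decreasing transfinite chain of closed sets and use power homogeneity (via homeomorphisms of $X^\lambda$) to relocate them, building inside a compact subset a free sequence of length $\kappa^+$, which compactness forbids. I expect the localization step of the previous paragraph --- controlling one coordinate of a homeomorphism of $X^\lambda$ by only $\kappa$ coordinates and matching this with the $\pi$-character data furnished by applying the hypothesis to the right closed sets --- to be the main technical hurdle; once it is in place, the transfinite recursion and the compactness of $X$ should close the argument exactly as in the cited propositions, and feed directly into the application to $\sigma$-CT compacta through Theorem~\ref{closedsubset}.
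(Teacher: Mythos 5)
Your reduction to a homogeneous power and the resulting trichotomy are fine as far as they go: the formula $\pi\chi(\bar x,X^\lambda)=\lambda\cdot\sup_{\alpha}\pi\chi(\bar x_\alpha,X)$ is correct for Hausdorff $X$ with more than one point, and the cases $\mu>\lambda$ and $\mu=\lambda\leq\kappa$ are genuinely closed by the single seed point obtained from $F=X$. But the case $\mu=\lambda>\kappa$ --- which is the entire content of the theorem, since power homogeneity is only harder than homogeneity precisely there --- is not proved. Two concrete problems: first, Theorem~\ref{getA2kappa} has $\pi\chi(X_j)\leq\kappa$ among its hypotheses, so invoking it (or ``the analog'' of it) to control one coordinate of $h$ presupposes the conclusion you are trying to establish; the versions that avoid this circularity require smallness of the $\pi$-character at the specific points being moved, and you never say where those points come from. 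Second, your proposed endgame --- build a free sequence of length $\kappa^+$ inside a compact set and let ``compactness forbid'' it --- is false: compacta admit arbitrarily long free sequences in general, and free sequences of length $\kappa^+$ are excluded only under a tightness-type hypothesis ($t(X)\leq\kappa$, or the $S$-/$T$-free versions in Propositions~\ref{noSfree} and~\ref{noTfree}, which need $wt(X)\leq\kappa$ or $at(X)\leq\kappa$). No such hypothesis is available here, so that contradiction cannot be reached.

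The missing idea is different and simpler. Apply the hypothesis not to exotic closed sets in a recursion but to $F=\closure{V}$ for each nonempty open $V\sse X$: you get $x\in\closure{V}$ with $\pi\chi(x,\closure{V})\leq\kappa$, and since $V$ is open in $X$ and dense in $\closure{V}$, intersecting a local $\pi$-base of $x$ in $\closure{V}$ with $V$ gives a local $\pi$-base of $x$ in $X$ of size $\leq\kappa$. Hence the set $D=\{x\in X:\pi\chi(x,X)\leq\kappa\}$ is dense in $X$. One then quotes the theorem of Ridderbos (also in \cite{AVR2007}) that a power homogeneous Hausdorff space with a dense set of points of $\pi$-character at most $\kappa$ satisfies $\pi\chi(X)\leq\kappa$; that result is exactly the non-circular form of the coordinate-control argument you were reaching for, since the homeomorphism of $X^\lambda$ can be chosen to move a point of $D^\lambda$, all of whose coordinates already have small $\pi$-character. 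This is how the cited Lemma 4.2 of \cite{Car2018} is actually proved; your cases $\mu>\lambda$ and $\mu=\lambda\leq\kappa$ then become unnecessary detours.
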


The following is a direct corollary of the previous two theorems.

\begin{corollary}\label{pichisigma}
Let $X$ be a $\sigma$-CT power homogeneous compactum. Then $\pi\chi(X)\leq\omega$. 
\end{corollary}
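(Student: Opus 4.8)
The plan is to combine Theorem~\ref{closedsubset} and Theorem~\ref{closedsubset2} directly, with the cardinal parameter $\kappa$ set to $\omega$. The statement is essentially the composition of these two results, so the task is primarily to verify that the hypotheses align and to instantiate the bound.

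First I would invoke Theorem~\ref{closedsubset}. Since $X$ is assumed to be a $\sigma$-CT compactum, that theorem applies verbatim and yields the local conclusion: every nonempty closed subspace $F\sse X$ contains a point $x\in F$ with $\pi\chi(x,F)\leq\omega$. This establishes exactly the hypothesis needed to feed into the second result.

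Next I would apply Theorem~\ref{closedsubset2} with $\kappa=\omega$. Since $X$ is also power homogeneous and compact, and since the previous step supplies, for every nonempty closed $F\sse X$, a point $x\in F$ with $\pi\chi(x,F)\leq\omega$, the hypothesis of Theorem~\ref{closedsubset2} is met with $\kappa=\omega$. The conclusion of that theorem then gives $\pi\chi(X)\leq\omega$, which is precisely the desired bound.

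There is no real obstacle here, as this is a direct composition; the only point requiring a moment's care is confirming that the class $\sigma$-CT is the correct hypothesis for Theorem~\ref{closedsubset} (it is, by definition a countable union of countably tight subspaces) and that $X$ being power homogeneous and compact meets the standing assumptions of Theorem~\ref{closedsubset2}. Both hold by hypothesis, so the two results chain together immediately to give $\pi\chi(X)\leq\omega$.
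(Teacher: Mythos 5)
Your proposal is correct and matches the paper exactly: the paper states that Corollary~\ref{pichisigma} is a direct consequence of Theorem~\ref{closedsubset} and Theorem~\ref{closedsubset2}, which is precisely the composition you carry out with $\kappa=\omega$.
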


An interesting consequence of Corollary~\ref{pichisigma} is the next result. It follows from the fact that $|X|\leq 2^{c(X)\pi\chi(X)}$ for any power homogeneous Hausdorff space $X$ \cite{CR2008}.

\begin{corollary}
Let $X$ be a $\sigma$-CT power homogeneous compactum. Then $|X|\leq 2^{c(X)}$.
\end{corollary}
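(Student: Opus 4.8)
The plan is to combine Corollary~\ref{pichisigma} with the known cardinality bound for power homogeneous Hausdorff spaces in terms of cellularity and $\pi$-character. First I would invoke the result of \cite{CR2008} that $|X|\leq 2^{c(X)\pi\chi(X)}$ for any power homogeneous Hausdorff space $X$. Since $X$ is a $\sigma$-CT power homogeneous compactum, it is in particular a power homogeneous Hausdorff space, so this bound applies directly.

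The key observation is that the hypothesis $\sigma$-CT lets me control $\pi\chi(X)$. By Corollary~\ref{pichisigma}, a $\sigma$-CT power homogeneous compactum satisfies $\pi\chi(X)\leq\omega$. Substituting this into the bound from the previous paragraph gives
$$|X|\leq 2^{c(X)\pi\chi(X)}\leq 2^{c(X)\cdot\omega}=2^{c(X)},$$
where the last equality uses $c(X)\geq\omega$ (cellularity is an infinite cardinal invariant), so that $c(X)\cdot\omega=c(X)$. This yields the desired inequality $|X|\leq 2^{c(X)}$.

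The proof is essentially immediate given the two ingredients, so there is no genuine obstacle; the only point requiring care is confirming the applicability of the external bound. The result $|X|\leq 2^{c(X)\pi\chi(X)}$ from \cite{CR2008} is stated for power homogeneous Hausdorff spaces, and a $\sigma$-CT power homogeneous compactum is certainly Hausdorff and power homogeneous, so the hypotheses are met. The one subtlety worth stating explicitly is that Corollary~\ref{pichisigma} itself requires $X$ to be both $\sigma$-CT and a power homogeneous compactum, which are exactly our standing assumptions, so its conclusion $\pi\chi(X)\leq\omega$ is available. With these two facts in hand, the chain of inequalities above completes the argument.
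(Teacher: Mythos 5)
Your proof is correct and follows exactly the route the paper indicates: apply Corollary~\ref{pichisigma} to get $\pi\chi(X)\leq\omega$ and then substitute into the bound $|X|\leq 2^{c(X)\pi\chi(X)}$ from \cite{CR2008}. No further comment is needed.
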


If there is a cover $\scr{C}$ of a space $X$ and a cardinal $\kappa$ for which $|\scr{C}|\leq\kappa$ and for all $C\in\scr{C}$, $t(C)\leq\kappa$ and $X=cl_\kappa C$ then, by definition, $at(X)\leq\kappa$. As the next proposition shows, if there is an additional cardinal restriction on the $\pi$-character $\pi\chi(X)$ then the condition ``$X=cl_\kappa C$ for all $C\in\scr{C}$ can be relaxed to ``$C$ is dense in $X$ for all $C\in\scr{C}$'' to still guarantee that $at(X)\leq\kappa$.

\begin{proposition}\label{atcondition}
Let $X$ be a space, $\kappa$ a cardinal, and $\scr{C}$ a cover of $X$ such that $|\scr{C}|\leq\kappa$ and for all $C\in\scr{C}$, $t(C)\leq\kappa$ and $C$ is dense in $X$. If $\pi\chi(X)\leq\kappa$ then $at(X)\leq\kappa$.
\end{proposition}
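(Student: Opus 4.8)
The plan is to verify directly that the given cover $\scr{C}$ itself witnesses $at(X)\leq\kappa$. Comparing Definition~\ref{almostTightness} with the hypotheses, the conditions $|\scr{C}|\leq\kappa$ and $t(C)\leq\kappa$ for all $C\in\scr{C}$ are already in hand; the only gap is that the hypothesis supplies that each $C$ is \emph{dense} in $X$, i.e. $\closure{C}=X$, whereas Definition~\ref{almostTightness} demands the stronger $X=cl_\kappa C$. So the entire task reduces to upgrading density to $\kappa$-closure density for a fixed $C\in\scr{C}$, and this is exactly where the hypothesis $\pi\chi(X)\leq\kappa$ enters.

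To carry this out, I fix $C\in\scr{C}$ and an arbitrary point $x\in X$, and I aim to produce a set $B\in[C]^{\leq\kappa}$ with $x\in\closure{B}$, which gives $x\in cl_\kappa C$. Since $\pi\chi(X)\leq\kappa$, I choose a local $\pi$-base $\scr{V}$ at $x$ with $|\scr{V}|\leq\kappa$; that is, $\scr{V}$ is a family of nonempty open sets of $X$ such that every neighborhood of $x$ contains some member of $\scr{V}$. For each $V\in\scr{V}$, density of $C$ provides a point $c_V\in V\meet C$, and I set $B=\{c_V:V\in\scr{V}\}$, so that $B\in[C]^{\leq\kappa}$.

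It remains to check $x\in\closure{B}$. Given any neighborhood $U$ of $x$, the $\pi$-base property yields $V\in\scr{V}$ with $V\sse U$, whence $c_V\in V\sse U$ and $c_V\in B$; thus every neighborhood of $x$ meets $B$ and $x\in\closure{B}\sse cl_\kappa C$. As $x$ was arbitrary, $cl_\kappa C=X$, and as $C\in\scr{C}$ was arbitrary, $\scr{C}$ satisfies all three requirements of Definition~\ref{almostTightness}, so $at(X)\leq\kappa$.

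There is no serious obstacle here: the proof is a short direct verification. The only conceptual point---and the reason the hypothesis $\pi\chi(X)\leq\kappa$ cannot be dropped---is that density of $C$ alone places $x$ merely in $\closure{C}$, with no control on the size of the subset of $C$ accumulating to $x$; the small local $\pi$-base is precisely the device that selects a $\kappa$-sized witnessing subset $B$. Notably, the tightness hypothesis $t(C)\leq\kappa$ plays no role in this upgrade and is simply inherited by $\scr{C}$.
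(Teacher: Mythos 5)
Your proof is correct and is essentially identical to the paper's own argument: both fix $C\in\scr{C}$ and $x\in X$, take a local $\pi$-base of size at most $\kappa$ at $x$, and use density of $C$ to select one point of $C$ from each member of the $\pi$-base, producing a $\kappa$-sized subset of $C$ whose closure contains $x$. Nothing further is needed.
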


\begin{proof}
It only needs to be shown that $X=cl_\kappa(C)$ for all $C\in\scr{C}$. Let $C\in\scr{C}$ and let $x\in X$. Let $\scr{B}$ be a local $\pi$-base at $x$ such that $|\scr{B}\leq\kappa$. As $C$ is dense in $X$, for all $B\in\scr{B}$ there exists $x_B\in B\meet C$. Then $x\in\closure{\{x_B:B\in\scr{B}\}}$ and $x\in cl_\kappa(C)$.
\end{proof}

The following is a consequence of Corollary~\ref{pichisigma} and Proposition~\ref{atcondition}.

\begin{corollary}\label{almostct}
Let $X$ be a power homogeneous compactum and with a countable cover consisting of dense, countably tight subspaces. Then $X$ is almost countably tight.
\end{corollary}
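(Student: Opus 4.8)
The plan is to combine the two immediately preceding results, Corollary~\ref{pichisigma} and Proposition~\ref{atcondition}, since between them they furnish exactly what is needed. Let $\scr{C}$ denote the given countable cover of $X$ by dense, countably tight subspaces, so that $|\scr{C}|\leq\omega$ and $t(C)\leq\omega$ for every $C\in\scr{C}$.

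First I would note that $X$ is a $\sigma$-CT compactum. Indeed, $X=\Un\scr{C}$ exhibits $X$ as a countable union of countably tight subspaces (the density of the members of $\scr{C}$ is not even needed for this observation). Since $X$ is by hypothesis a power homogeneous compactum, Corollary~\ref{pichisigma} applies and yields $\pi\chi(X)\leq\omega$.

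Next I would verify that the hypotheses of Proposition~\ref{atcondition} are met with $\kappa=\omega$: the cover $\scr{C}$ satisfies $|\scr{C}|\leq\omega$, each $C\in\scr{C}$ has $t(C)\leq\omega$ and is dense in $X$, and we have just established $\pi\chi(X)\leq\omega$. Proposition~\ref{atcondition} then gives $at(X)\leq\omega$. Since $at(X)$ is by definition an infinite cardinal, this forces $at(X)=\omega$, i.e. $X$ is almost countably tight, as required.

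I do not anticipate any genuine obstacle here: the entire content of the argument is that the conclusion $\pi\chi(X)\leq\omega$ of Corollary~\ref{pichisigma} is precisely the extra hypothesis that Proposition~\ref{atcondition} needs in order to upgrade a countable cover by dense, countably tight subspaces into a witness for $at(X)\leq\omega$. The only point requiring care is the clean extraction of $\sigma$-CT-ness from the stated cover so that Corollary~\ref{pichisigma} may be invoked, after which the two lemmas chain together directly.
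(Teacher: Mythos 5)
Your proposal is correct and follows exactly the route the paper intends: the paper presents this corollary as an immediate consequence of Corollary~\ref{pichisigma} (giving $\pi\chi(X)\leq\omega$ from the $\sigma$-CT hypothesis) and Proposition~\ref{atcondition} (upgrading the countable cover by dense countably tight subspaces to a witness for $at(X)\leq\omega$). Nothing is missing.
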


The following was proved in \cite{Car2018}. It represents an extension of Theorem 4.1 in \cite{JVM2018}. We see that it is a consequence of Theorem~\ref{cptPH}.

\begin{theorem}[Theorem 4.6 \cite{Car2018}]\label{previousPH}
Let $X$ be a power homogeneous compactum and suppose there exists a countable cover of $X$ consisting of dense, countably tight subspaces. Then $|X|\leq\mathfrak{c}$.
\end{theorem}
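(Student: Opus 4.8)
The plan is to assemble the three corollaries and the central theorem of this section, since all the substantive work has already been carried out; what remains is essentially bookkeeping. The hypothesis provides a countable cover of $X$ by dense, countably tight subspaces, which is exactly the data required by Corollary~\ref{almostct}. First I would invoke that corollary to conclude that $X$ is almost countably tight, i.e. $at(X)\leq\omega$.

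Next I would bound the $\pi$-character. The given cover already exhibits $X$ as a countable union of countably tight subspaces, so $X$ is $\sigma$-CT by definition; alternatively this follows from $at(X)=\omega$ via the observation (recorded just before Corollary~\ref{pichisigma}) that an almost countably tight space is $\sigma$-CT. Either way, $X$ is a $\sigma$-CT power homogeneous compactum, and Corollary~\ref{pichisigma} then yields $\pi\chi(X)\leq\omega$.

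Finally I would apply Theorem~\ref{cptPH}, which bounds the cardinality of any power homogeneous compactum by $2^{at(X)\pi\chi(X)}$. Substituting the two estimates just obtained gives
$$|X|\leq 2^{at(X)\pi\chi(X)}\leq 2^{\omega\cdot\omega}=2^\omega=\mathfrak{c},$$
as required.

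I do not anticipate any genuine obstacle: the result is a direct consequence of Theorem~\ref{cptPH} once one knows that both $at(X)$ and $\pi\chi(X)$ are countable. The only step requiring a moment's care is confirming that the hypotheses of Corollary~\ref{almostct} and Corollary~\ref{pichisigma} match the supplied data exactly, which they do—the former consumes the cover verbatim, and the latter needs only that $X$ be $\sigma$-CT, a property the cover supplies immediately.
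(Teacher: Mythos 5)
Your proposal is correct and follows essentially the same route as the paper's own proof: Corollary~\ref{pichisigma} gives $\pi\chi(X)\leq\omega$, Corollary~\ref{almostct} gives $at(X)\leq\omega$, and Theorem~\ref{cptPH} yields $|X|\leq 2^{\omega\cdot\omega}=\mathfrak{c}$. The only difference is the order in which the two estimates are stated, which is immaterial.
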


\begin{proof}
As $X$ is a $\sigma$-CT power homogeneous compactum, by Corollary~\ref{pichisigma} it follows that $\pi\chi(X)\leq\omega$. Now use Corollary~\ref{almostct} to conclude that $X$ is almost countably tight. As $X$ is a power homogeneous compactum, by Theorem~\ref{cptPH}, we have $|X|\leq 2^{at(X)\pi\chi(X)}\leq 2^{\omega\cdot\omega}=\mathfrak{c}$.
\end{proof}

\section{Generalizations involving the weak tightness.}

In this section we extend extend Theorem~\ref{cpthomog} (parts (a) and (b)) to the Hausdorff and regular settings. We will need Theorem~\ref{compactsubset2}, which applies to Hausdorff spaces $X$ with the property $wt(X)pct(X)\leq\kappa$ for a cardinal $\kappa$. We will also need the following lemma from \cite{Car2018}.

\begin{lemma}[Lemma 3.4 in \cite{Car2018}]\label{subcover}
Let $X$ be a space, $D\sse X$, $wt(X)\leq\kappa$, and let $\scr{G}$ be a cover of $\closure{D}$ consisting of $G^c_\kappa$-sets of $X$. Then there exists $\scr{G}^\prime\sse\scr{G}$ such that $|\scr{G}^\prime|\leq |D|^\kappa$ and $\scr{G}^\prime$ covers $D$.
\end{lemma}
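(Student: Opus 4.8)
The plan is to prove Lemma~\ref{subcover} by a straightforward transfinite/closing-off argument that exploits the compactness-like behavior of $G^c_\kappa$-sets together with the $wt(X)\leq\kappa$ hypothesis. The goal is to extract a subfamily $\scr{G}^\prime\sse\scr{G}$ of size at most $|D|^\kappa$ that still covers every point of $D$. First I would fix, via $wt(X)\leq\kappa$, a cover $\scr{C}$ witnessing this, and recall from Proposition~\ref{saturated} that each point $x\in X$ lies in a $\scr{C}$-saturated set $S(x)\in[X]^{\leq 2^\kappa}$; more to the point, I expect to use the structure of $G^c_\kappa$-sets to control how a point of $D$ can be captured. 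The core observation is that each $G\in\scr{G}$ is an intersection $\Meet_{\alpha<\kappa}\closure{U_\alpha^G}$ of $\kappa$-many closures of open sets, so membership $d\in G$ for $d\in D$ is determined by $\kappa$-much local information.

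The main step I would carry out is a counting argument on the ``traces'' the members of $\scr{G}$ leave on $D$. For each point $d\in D$, I would like to choose a single $G(d)\in\scr{G}$ with $d\in G(d)$, and then argue that the collection $\{G(d):d\in D\}$ has size at most $|D|^\kappa$ after identifying members that agree in a suitable sense. Since each $G\in\scr{G}$ is a $G^c_\kappa$-set, I would attempt to code $G(d)$ by a function from $\kappa$ into some set of cardinality at most $|D|$ (for instance, recording for each $\alpha<\kappa$ a point of $D$ witnessing that $d$ lies in $\closure{U_\alpha^{G(d)}}$, or indexing the relevant basic open neighborhoods), so that the number of distinct codes is at most $|D|^\kappa$. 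This is where the hypothesis $wt(X)\leq\kappa$ should enter decisively: weak tightness lets me reduce the computation of closures over $D$ to subsets of size at most $\kappa$, so that whether $d\in G$ is witnessed by $\kappa$-sized subsets of $D$ rather than by arbitrary subsets, keeping the coding within $|D|^\kappa$-many possibilities.

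The hard part will be organizing the coding so that two points $d,d^\prime\in D$ that receive the same code are guaranteed to be covered by a common member of $\scr{G}$, i.e.\ ensuring that the map from codes to chosen $G^c_\kappa$-sets is well-defined and that the resulting subfamily genuinely covers all of $D$ and not merely a dense portion of it. I anticipate needing Lemma~\ref{closureunion} (the exchange of closure and union along $\scr{C}$-saturated chains) to pass from ``$\scr{G}^\prime$ covers a dense subset of $D$'' to ``$\scr{G}^\prime$ covers $D$,'' since the $G^c_\kappa$-sets are closed and the weak-tightness machinery controls closures of increasing unions. The delicate bookkeeping is thus the simultaneous control of (i) the cardinality bound $|D|^\kappa$ on the number of codes and (ii) the faithfulness of the code in recovering a covering $G^c_\kappa$-set.

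If the direct coding proves awkward, the fallback plan is a transfinite construction of length $|D|^\kappa$: enumerate $D$ (or an appropriate indexing set of size $|D|^\kappa$) and at each stage add one member of $\scr{G}$ covering the least not-yet-covered point, then argue that the process terminates within $|D|^\kappa$ steps because an $S$-free sequence of length exceeding the bound would otherwise arise, contradicting the no-long-free-sequence phenomenon underlying Proposition~\ref{noSfree}. Either route reduces to the same essential content: weak tightness bounds the combinatorial complexity of covering $D$ by $G^c_\kappa$-sets, and the $G^c_\kappa$ structure makes each covering decision depend on only $\kappa$-much data.
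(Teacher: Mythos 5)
The paper does not prove this lemma; it imports it verbatim from \cite{Car2018}, so there is no in-house proof to compare against. Judged on its own, your plan has the right silhouette --- code each covering decision by a $\kappa$-sequence of $\leq\kappa$-sized subsets of $D$, so that at most $(|D|^\kappa)^\kappa=|D|^\kappa$ members of $\scr{G}$ suffice --- but both working parts of that argument are missing, and the specific tools you name would not supply them. First, note that as literally stated (``$\scr{G}'$ covers $D$'') the lemma is a one-liner: pick one $G\in\scr{G}$ through each point of $D$. The real content, and what Theorem~\ref{pwL_c} actually uses when it invokes the lemma for $\closure{A_\alpha}$, is that $\scr{G}'$ covers $\closure{D}$; your proposal circles this (``not merely a dense portion'') without committing to it. For that version the two needed steps are: (i) the elementary fact that if $x\in\closure{A}$ and $U$ is open with $x\in U$, then $x\in\closure{U\meet A}$; applied to the $\kappa$-many open sets $U_\alpha$ with $G=\Meet_\alpha U_\alpha=\Meet_\alpha\closure{U_\alpha}$, this yields sets $E_\alpha\in[D]^{\leq\kappa}$ with $x\in\Meet_\alpha\closure{E_\alpha}\sse G$, and since any other point with the same code $(E_\alpha)_{\alpha<\kappa}$ also lies in $\Meet_\alpha\closure{E_\alpha}$, it lies in the \emph{same} chosen $G$ --- this disposes of the well-definedness issue you flag as ``the hard part'' with no extra machinery. (ii) To launch (i) one needs that every $x\in\closure{D}$ is in $\closure{E}$ for some $E\in[D]^{\leq\kappa}$; this is where $wt(X)\leq\kappa$ enters, via the tightness $\leq\kappa$ of the single $C\in\scr{C}$ containing $x$ combined with density of $C\meet D$ in $D$ (in every application in this paper $D$ is $\scr{C}$-saturated, which is exactly what makes this work). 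Your proposal instead reaches for Proposition~\ref{saturated}, whose sets $S(x)$ have size $2^\kappa$ and could only give a bound of order $|D|^{2^\kappa}$, and for Lemma~\ref{closureunion}, which concerns increasing $\kappa^+$-chains and plays no role here.

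Your fallback route is also not viable. Proposition~\ref{noSfree} forbids $S$-free sequences of length $\kappa^+$ inside a \emph{compact} set; here $D$ and $\closure{D}$ carry no compactness hypothesis, the target bound is $|D|^\kappa$ rather than $\kappa^+$, and selecting at each stage a member of $\scr{G}$ through the least uncovered point provides no mechanism for separating $\closure{S(\{x_\beta:\beta<\alpha\})}$ from the closure of the later points, which is what an $S$-free sequence requires. So the transfinite-exhaustion argument would neither terminate at the right cardinal nor produce the intended contradiction.
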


The following was defined by Bella and Spadaro in \cite{BS2020}.

\begin{definition}[\cite{BS2020}]\label{pwL}
Let $X$ be a space. The \emph{piecewise weak Lindel\"of degree} $pwL(X)$ of $X$ is the least infinite cardinal $\kappa$ such that for every open cover $\scr{U}$ of $X$ and every decomposition $\{\scr{U}_i:i\in I\}$ of $\scr{U}$, there are families $\scr{V}_i\in[\scr{U}_i]^{\leq\kappa}$ for every $i\in I$ such that $X\sse\Un\{\closure{\Un\scr{V}_i}:i\in I\}$. The \emph{piecewise weak Lindel\"of degree for closed sets} $pwL_c(X)$ of $X$ is the least infinite cardinal $\kappa$ such that for every closed set $F\sse X$, for every open cover $\scr{U}$ of $F$, and every decomposition $\{\scr{U}_i:i\in I\}$ of $\scr{U}$, there are families $\scr{V}_i\in[\scr{U}_i]^{\leq\kappa}$ for every $i\in I$ such that $F\sse\Un\{\closure{\Un\scr{V}_i}:i\in I\}$. 
\end{definition}

It is clear that $pwL_c(X)\leq L(X)$ and it was shown in \cite{BS2020} that $pwL_c(X)\leq c(X)$. It was also shown in \cite{BS2020} that if $X$ is Hausdorff then $|X|\leq 2^{pwL_c(X)\chi(X)}$, providing a common proof of the well-known cardinality bounds $2^{L(X)\chi(X)}$ and $2^{c(X)\chi(X)}$ for Hausdorff spaces.

The following improves Theorem 3.5 in \cite{BC2020b} by replacing $t(X)$ with $wt(X)$. Unlike Theorem 3.5 in \cite{BC2020b}, the closing-off argument uses an increasing chain of $\scr{C}$-saturated sets.

\begin{theorem}\label{pwL_c}
Let $X$ be a Hausdorff space and $\kappa$ a cardinal such that $pwL_c(X)wt(X)\leq\kappa$. Let $\scr{G}$ be a cover of $X$ consisting of compact sets $G$ such that $\chi(G,X)\leq\kappa$. Then there exists $\scr{G}^\prime\sse\scr{G}$ such that $X=\closure{\Un\scr{G}^\prime}$ and $|\scr{G}^\prime|\leq 2^\kappa$.
\end{theorem}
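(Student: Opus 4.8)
The plan is to thin $\scr{G}$ by a closing-off construction of length $\kappa^+$ built on an increasing chain of $\scr{C}$-saturated sets, using $pwL_c(X)$ in place of the Lindel\"of degree at the final contradiction step. First I would fix a cover $\scr{C}$ witnessing $wt(X)\le\kappa$ and, for each $G\in\scr{G}$, an open neighborhood base $\{N(G,\xi):\xi<\kappa\}$ of $G$ in $X$; this exists since $\chi(G,X)\le\kappa$. As $X$ is Hausdorff every $G\in\scr{G}$ is compact and hence a $G^c_\kappa$-set, so $\scr{G}$ is a cover of $X$ by $G^c_\kappa$-sets and Lemma~\ref{subcover} is available. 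The goal is to produce $\scr{G}^\prime\sse\scr{G}$ with $|\scr{G}^\prime|\le 2^\kappa$ and $\closure{\Un\scr{G}^\prime}=X$.

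By recursion on $\alpha<\kappa^+$ I would construct increasing families $\scr{G}_\alpha\in[\scr{G}]^{\le 2^\kappa}$ and an increasing chain $D_\alpha\in[X]^{\le 2^\kappa}$ of $\scr{C}$-saturated sets. Writing $D_{<\alpha}=\Un_{\beta<\alpha}D_\beta$, at stage $\alpha$ I would (i) use Lemma~\ref{subcover} on $D_{<\alpha}$ to enlarge $\scr{G}_\alpha$ to a subfamily of $\scr{G}$ covering $D_{<\alpha}$, which is legitimate because $|D_{<\alpha}|^\kappa\le 2^\kappa$; (ii) adjoin $S(D_{<\alpha})$ from Proposition~\ref{saturated}, keeping $D_\alpha$ $\scr{C}$-saturated and of size $\le 2^\kappa$; and (iii) for every $\scr{V}\in[\{N(G,\xi):G\in\scr{G}_{<\alpha},\,\xi<\kappa\}]^{\le\kappa}$ with $X\minus\closure{\Un\scr{V}}\neq\es$, throw a witness point $q_{\scr{V}}\in X\minus\closure{\Un\scr{V}}$ into $D_\alpha$ together with a member of $\scr{G}$ containing it. Since there are at most $2^\kappa$ such $\scr{V}$, all objects stay of size $\le 2^\kappa$; at limits take unions. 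Setting $\scr{G}^\prime=\Un_{\alpha}\scr{G}_\alpha$ and $D=\Un_\alpha D_\alpha$, clause (i) gives $D\sse\Un\scr{G}^\prime$, and since the $D_\alpha$ form an increasing $\kappa^+$-chain of $\scr{C}$-saturated sets, Lemma~\ref{closureunion} yields $\closure D=\Un_\alpha\closure{D_\alpha}$.

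To finish, suppose toward a contradiction that $p\in X\minus\closure{\Un\scr{G}^\prime}$. For each $G\in\scr{G}^\prime$, compactness of $G$, the fact that $p\notin G$, and Hausdorffness produce $\xi_G$ with $G\sse N(G,\xi_G)$ and $p\notin\closure{N(G,\xi_G)}$; intersecting the base of $G$ with $N(G,\xi_G)$ I may assume every neighborhood of $G$ in play has closure avoiding $p$. Feeding the resulting neighborhood cover of $\closure D$, decomposed piecewise along the compacta of $\scr{G}^\prime$, into $pwL_c(X)\le\kappa$, the aim is a selection of $\le\kappa$ neighborhoods per compactum whose closures cover $\closure D\supseteq D$ while each closure avoids $p$; this selection realized inside a single stage $\scr{G}_{<\alpha}$ yields a family $\scr{V}$ as in clause (iii) for which the witness $q_{\scr{V}}\in D\sse\closure{\Un\scr{V}}$ would lie outside $\closure{\Un\scr{V}}$, a contradiction, so $\closure{\Un\scr{G}^\prime}=X$. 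The hard part is precisely this last reconciliation: $pwL_c$ only bounds the selection \emph{per piece}, so to keep the construction of size $2^\kappa$ and to capture the fatal selection at one stage below $\kappa^+$ one must reduce any single point of $\closure D$ to the closure of a $\le\kappa$-sized subfamily of neighborhoods. This is where the tightness $t(C)\le\kappa$ of the members of $\scr{C}$ enters through $\scr{C}$-saturation: each point of $\closure D=\Un_\alpha\closure{D_\alpha}$ lies, by saturation and Lemma~\ref{closureunion}, in the closure of a $\le\kappa$-subset of $D$ contained in a single $C\in\scr{C}$, so only $\le\kappa$ neighborhoods of chosen compacta are needed locally and the selection reflects into some $\scr{G}_{<\alpha}$.
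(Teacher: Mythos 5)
Your overall architecture (a $\kappa^+$-length closing-off over an increasing chain of $\scr{C}$-saturated sets, Lemma~\ref{subcover} to keep $\scr{G}_\alpha$ small, Lemma~\ref{closureunion} to get $\closure{D}=\Un_\alpha\closure{D_\alpha}$, and a final contradiction via $pwL_c$) matches the paper's proof. But the interface with $pwL_c(X)$ --- the one step you yourself flag as ``the hard part'' --- is where the argument breaks, in two related ways. First, your decomposition of the neighborhood cover is ``piecewise along the compacta of $\scr{G}^\prime$,'' which gives up to $2^\kappa$ pieces; $pwL_c(X)\leq\kappa$ then returns $\leq\kappa$ sets \emph{per piece}, i.e.\ up to $2^\kappa$ sets in total (indeed, with one basic neighborhood chosen per compactum each piece is essentially a singleton and $pwL_c$ returns the whole cover). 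Such a selection cannot be reflected into a single stage $\alpha<\kappa^+$ and does not match your clause (iii). Your proposed repair via the tightness of the members of $\scr{C}$ does not close this: saturation puts each \emph{point} of $\closure{D}$ in the closure of a $\leq\kappa$-sized subset of $D$, but that only localizes the problem point by point; it does not produce a single family of $\leq\kappa$ pieces, each with a $\leq\kappa$-sized selection, whose closures cover all of $\closure{D}$ while avoiding $p$.

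The paper's decomposition is genuinely different and is the missing idea. At the contradiction point one first takes $G\in\scr{G}$ (the \emph{original} cover) with $p\in G$, uses regularity of the compactum $G$ to find a compact $H$ with $p\in H\sse G\minus\closure{\Un\scr{G}^\prime}$ and $\chi(H,X)\leq\kappa$ (via Lemma~\ref{char}), and fixes a neighborhood base $\scr{N}$ of $H$ with $|\scr{N}|\leq\kappa$. For each $G'\in\scr{G}^\prime$ one chooses $U(G')\in\scr{U}(G')$ disjoint from some $N\in\scr{N}$, and the decomposition is indexed by $\scr{N}$: $\scr{U}_N=\{U(G'):U(G')\meet N=\es\}$. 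This has only $\leq\kappa$ pieces, so $pwL_c$ yields $\leq\kappa$ subfamilies $\scr{V}_N$, each of size $\leq\kappa$, hence $\leq\kappa$ sets in total, which land in a single $\scr{U}(\scr{G}_\alpha)$; moreover $\Un\scr{V}_N\sse X\minus N$, a closed set, so each $\closure{\Un\scr{V}_N}$ misses $H\ni p$. A second, consequent defect of your setup: your clause (iii) only anticipates a single subfamily $\scr{V}$ and the single closed set $\closure{\Un\scr{V}}$, whereas the $pwL_c$ conclusion covers $\closure{D}$ by $\Un_{N}\closure{\Un\scr{V}_N}$, a \emph{union of $\kappa$-many closures of unions}; since $\Un_N\closure{\Un\scr{V}_N}$ can be strictly smaller than $\closure{\Un_N\Un\scr{V}_N}$, your witness clause may fail to fire exactly when it is needed. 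The paper's condition (3) therefore quantifies over families in $\left[\left[\scr{U}(\scr{G}_\alpha)\right]^{\leq\kappa}\right]^{\leq\kappa}$ and places the witness outside $\Un_\beta\closure{\Un\scr{V}_\beta}$. Both adjustments are needed to make your proof go through.
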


\begin{proof}
Let $\scr{C}$ be a cover of $X$ witnessing that $wt(X)\leq\kappa$. For all $G\in\scr{G}$, let $\scr{U(G)}$ be a neighborhood base at $G$ such that $|\scr{U}(G)|\leq\kappa$. If $\scr{G}^\prime\sse\scr{G}$, let $\scr{U}(\scr{G}^\prime)=\Un\{\scr{U}(G):G\in\scr{G}^\prime\}$. Note each $G\in\scr{G}$ us a $G^c\kappa$-set as each $G\in\scr{G}$ is compact, $X$ is Hausdorff, and $\chi(G,X)\leq\kappa$. We build an increasing chain $\{A_\alpha:\alpha<\kappa^+\}$ of $\scr{C}$-saturated subsets of $X$ and an increasing chain $\{\scr{G}_\alpha:\alpha<\kappa^+\}$ of subsets of $\scr{G}$ such that
\begin{enumerate}
\item $|\scr{G}_\alpha|\leq 2^\kappa$ and $|A_\alpha|\leq 2^\kappa$,
\item $\closure{A_\alpha}\sse\Un\scr{G}_\alpha$ and each $A_\alpha$ is $\scr{C}$-saturated,
\item Whenever $\{V_\alpha:\beta<\kappa\}\in\left[\left[\scr{U}(\scr{G}_\alpha)\right]^{\leq\kappa}\right]^{\leq\kappa}$ for some $\alpha<\kappa^+$ and $X\minus\Un\{\closure{\Un\scr{V}_\beta}:\beta<\kappa\}\neq\es$, then $A_{\alpha+1}\minus\Un\{\closure{\Un\scr{V}_\beta}:\beta<\kappa\}\neq\es$.
\end{enumerate}
Let $A_0$ be an arbitrary $\scr{C}$-saturated set of cardinality at most $2^\kappa$. (For example, let $A_0=S(x)$ for some $x\in X$). For limit ordinals $\alpha<\kappa^+$, let $A_\alpha=\Un_{\beta<\alpha}A_\beta$. Then $|A_\alpha|\leq 2^\kappa$, and $A_\alpha$ is $\scr{C}$-saturated as is a union of $\scr{C}$-saturated sets is $\scr{C}$-saturated. By applying Lemma~\ref{subcover} to $\closure{A_\alpha}$, we get $\scr{G}_\alpha$ as required.

For successor ordinals $\alpha+1$, we ensure (3) above is satisfied. For all $\scr{V}=\{\scr{V}_\beta:\beta<\kappa\}\in\left[\left[\scr{U}(\scr{G}_\alpha)\right]^{\leq\kappa}\right]^{\leq\kappa}$ such that $X\minus\Un\{\closure{\Un\scr{V}_\beta}:\beta<\kappa\}\neq\es$, let $x_\scr{V}\in X\minus\Un\{\closure{\Un\scr{V}_\beta}:\beta<\kappa\}$. Define
\begin{align}
A_{\alpha+1}&=A_\alpha\un\{S(x_{\scr{V}}):\scr{V}=\{\scr{V}_\beta:\beta<\kappa\}\in\left[\left[\scr{U}(\scr{G}_\alpha)\right]^{\leq\kappa}\right]^{\leq\kappa}\textup{ and }\notag\\
&X\minus\Un\{\closure{\Un\scr{V}_\beta}:\beta<\kappa\}\neq\es\}\notag
\end{align}

As $|S(x_\scr{V})|\leq 2^\kappa$ and $\left|\left[\left[\scr{U}(\scr{G}_\alpha)\right]^{\leq\kappa}\right]^{\leq\kappa}\right|\leq 2^\kappa$, we have $|A_{\alpha+1}|\leq 2^\kappa$. As each $S(x_\scr{V})$ is $\scr{C}$-saturated, it follows that $A_{\alpha+1}$ is $\scr{C}$-saturated. Apply Lemma~\ref{subcover} again to $\closure{A_{\alpha+1}}$ to obtain $\scr{G}_{\alpha+1}$.

Let $A=\Un\{A_\alpha:\alpha<\kappa^+\}$. By Lemma~\ref{closureunion}, it follows that $\closure{A}=\Un\{\closure{A_\alpha}:\alpha<\kappa^+\}$. Let $\scr{G}^\prime=\Un\{\scr{G}_\alpha:\alpha<\kappa^+\}$ and note that $\closure{A}\sse\Un\scr{G}^\prime$. We show $X=\closure{\Un\scr{G}^\prime}$. Suppose by way of contradiction that $X\minus\closure{\Un\scr{G}^\prime}\neq\es$ and let $x\in X\minus\closure{\Un\scr{G}^\prime}$. Now there exists $G\in\scr{G}$ such that $x\in G$, as $\scr{G}$ is a cover of $X$. Since $G$ is compact and $X$ is Hausdorff, it follows that $G$ is regular. Note that $x\in G\minus\closure{\Un\scr{G}^\prime}$ and that $G\minus\closure{\Un\scr{G}^\prime}$ is an open set in $G$. By regularity of $G$ there exists a set $H\sse G$ such that $H$ is a closed $G_\delta^c$-set in $G$ and $x\in H\sse G\minus\closure{\Un\scr{G}^\prime}$. Observe that $H$ is compact as it is a closed subset of the compact set $G$. Thus $\omega=\psi(H,G)=\chi(H,G)$. Then, by Lemma~\ref{char}, $\chi(H,X)\leq\chi(H,G)\cdot\chi(G,X)\leq\omega\cdot\kappa=\kappa$ because $H$ and $G$ are compact.

Let $\scr{N}$ be a neighborhood base at $H$ in $X$ such that $|\scr{N}|\leq\kappa$. Now, for all $G\in\scr{G}^\prime$ we have $H\meet G=\es$. Since $H$ and $G$ are compact and $X$ is Hausdorff, there exists $N_G\in\scr{N}$ and $U(G)\in\scr{U}(G)$ such that $N_G\meet U(G)=\es$. Let $\scr{U}=\{U(G):G\in\scr{G}^\prime\}$ and note $\scr{U}$ is a cover of $\closure{A}$. For all $N\in\scr{N}$ let $\scr{U}_N=\{U\in\scr{U}:U\meet N=\es\}$ and observe that $x\in H\sse X\minus\closure{\Un\scr{U}_N}$. The family $\{\scr{U}_N:N\in\scr{N}\}$ is a decomposition of $\scr{U}$. Since $pwL_c(X)\leq\kappa$, for all $N\in\scr{N}$ there exists $\scr{V}_N\in[\scr{U}_N]^{\leq\kappa}$ such that $\closure{A}\sse\Un\{\closure{\Un\scr{V}_N}:N\in\scr{N}\}$. Then $x\in H\sse X\minus\closure{\Un\scr{V}_N}$ for each $N\in\scr{N}$ and $x\in X\minus\Un\{\closure{\Un\scr{V}_N}:N\in\scr{N}\}\neq\es$. As $\{\scr{V}_N:N\in\scr{N}\}$ fits condition (3) above and $\{\scr{V}_N:N\in\scr{N}\}\in\left[\left[\scr{U}(\scr{G}_\alpha)\right]^{\leq\kappa}\right]^{\leq\kappa}$ for some $\alpha<\kappa^+$, we have 
$$\es\neq A_{\alpha+1}\minus\Un\{\closure{\Un\scr{V}_N}:N\in\scr{N}\}\sse A\sse \closure{A}\sse\Un\{\closure{\Un\scr{V}_N}:N\in\scr{N}\},$$
a contradiction. Thus $X=\closure{\Un\scr{G}^\prime}$ which completes the proof.
\end{proof}

If $X$ is homogeneous the above theorem is used to obtain a bound for the cardinality of $X$.

\begin{theorem}\label{pbound}
If $X$ is homogeneous and Hausdorff then $|X|\leq 2^{pwL_c(X)wt(X)\pi\chi(X)pct(X)}$.
\end{theorem}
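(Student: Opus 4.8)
The plan is to set $\kappa = pwL_c(X)wt(X)\pi\chi(X)pct(X)$ and reduce the cardinality bound to a density bound: I will show $d(X)\leq 2^\kappa$ and then apply Lemma~\ref{ridPH}. Every homogeneous space is power homogeneous (take the exponent to be $1$), so Lemma~\ref{ridPH} gives $|X|\leq d(X)^{\pi\chi(X)}\leq (2^\kappa)^{\pi\chi(X)}=2^{\kappa\cdot\pi\chi(X)}=2^\kappa$, the last equality holding because $\pi\chi(X)\leq\kappa$. Thus the entire task is to produce a dense set of size at most $2^\kappa$.

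First I would manufacture a single ``seed'' compact set. Since $wt(X)pct(X)\leq\kappa$, Theorem~\ref{compactsubset2} yields a nonempty compact $G_0\sse X$ together with a set $H_0\sse X$ satisfying $\chi(G_0,X)\leq\kappa$, $G_0\sse\closure{H_0}$, and $|H_0|\leq 2^\kappa$. I would then spread this seed over $X$ using homogeneity: fix $g\in G_0$, and for each $x\in X$ choose a homeomorphism $h_x:X\to X$ with $h_x(g)=x$. Putting $G_x=h_x[G_0]$ and $H_x=h_x[H_0]$, each $G_x$ is compact, contains $x=h_x(g)$, and satisfies $\chi(G_x,X)=\chi(G_0,X)\leq\kappa$ since homeomorphisms preserve relative character; moreover $G_x\sse h_x[\closure{H_0}]=\closure{H_x}$ with $|H_x|=|H_0|\leq 2^\kappa$. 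Hence $\scr{G}=\{G_x:x\in X\}$ is a cover of $X$ by compact sets of relative character at most $\kappa$.

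Next I would invoke Theorem~\ref{pwL_c}, whose hypothesis $pwL_c(X)wt(X)\leq\kappa$ is satisfied, to extract a subfamily $\scr{G}^\prime\sse\scr{G}$ with $X=\closure{\Un\scr{G}^\prime}$ and $|\scr{G}^\prime|\leq 2^\kappa$. Setting $D=\Un\{H_x:G_x\in\scr{G}^\prime\}$, I get $|D|\leq 2^\kappa\cdot 2^\kappa=2^\kappa$, and $\Un\scr{G}^\prime\sse\Un\{\closure{H_x}:G_x\in\scr{G}^\prime\}\sse\closure{D}$, whence $X=\closure{\Un\scr{G}^\prime}\sse\closure{D}$ and $D$ is dense. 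This gives $d(X)\leq 2^\kappa$, and combined with the first paragraph completes the proof.

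The genuine difficulty has already been absorbed into Theorem~\ref{pwL_c}, whose proof runs the delicate closing-off argument along an increasing chain of $\scr{C}$-saturated sets and is precisely what lets the combination $pwL_c(X)wt(X)$ control the size of the subcover. Granting that theorem, the present argument is a clean assembly; the only points needing care are that homeomorphisms preserve $\chi(\cdot,X)$, so the translated seeds keep small relative character, and that a homogeneous space qualifies as power homogeneous for Lemma~\ref{ridPH}.
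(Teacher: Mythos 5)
Your proposal is correct and follows essentially the same route as the paper: seed compact set from Theorem~\ref{compactsubset2}, translation by homogeneity, extraction of a $2^\kappa$-sized subfamily via Theorem~\ref{pwL_c}, density of the union of the $H_x$'s, and finally Lemma~\ref{ridPH}. Your choice of the base point $g$ inside $G_0$ (so that $x\in G_x$) is in fact a small tightening of the paper's wording, which fixes $p\in X$ arbitrarily.
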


\begin{proof}
Let $\kappa=pwL_c(X)wt(X)\pi\chi(X)pct(X)$, and let $\scr{C}$ be a cover of $X$ witnessing that $wt(X)\leq\kappa$. As $wt(X)pct(X)\leq\kappa$, by Theorem~\ref{compactsubset2} there exists a nonempty compact set $G\sse X$ and $H\sse X$ such that $|H|\leq 2^\kappa$, $G\sse\closure{H}$, $\chi(G,X)\leq\kappa$, and $H$ is $\scr{C}$-saturated.

Fix $p\in X$. For all $x\in X$ there exists a homeomorphism $h_x:X\to X$ such that $h_x(p)=X$. Then $\scr{G}^\prime=\{h_x[G]:x\in X\}$ is a cover of $X$ by compact sets of character at most $\kappa$. By Theorem~\ref{pwL_c} above there exists a subfamily $\scr{G}\sse\scr{G}^\prime$ such that $|\scr{G}|\leq 2^\kappa$ and $X=\closure{\Un\scr{G}}$. Also, for all $G\in\scr{G}$ there exists $H_G\in[X]^{\leq 2^\kappa}$ such that $G\sse\closure{H_G}$. It follows that $D=\Un_{G\in\scr{G}}H_G$ is dense in $X$. Since $|D|\leq 2^\kappa\cdot 2^\kappa=2^\kappa$, we have $d(X)\leq 2^\kappa$. Now apply the fact that $|X|\leq d(X)^{\pi\chi(X)}$ for any Hausdorff homogeneous space (Lemma~\ref{ridPH}).
\end{proof}

Despite the fact that four cardinal invariants are used in the bound in Theorem~\ref{pbound}, each of these invariants should be regarded as ``small'' in a rough sense. In addition, as $\pi\chi(X)\leq t(X)pct(X)$ for a Hausdorff space $X$ and $wt(X)\leq t(X)$, we see that Theorem~\ref{pbound} improves Corollary 3.7 in \cite{BC2020b}. This latter result states that if $X$ is homogeneous and Hausdorff, then $|X|\leq 2^{pwL_c(X)t(X)pct(X)}$. Theorem~\ref{pbound} is also an extension of Theorem~\ref{cpthomog}(b).

It was shown in \cite{CR2008} that the cardinality of a homogeneous Hausdorff space $X$ is at most $2^{c(X)\pi\chi(X)}$. Observe that the above bound is a variation of this, as $pwL_c(X)\leq c(X)$ and $wt(X)$ and $pct(X)$ are added to the bound in Theorem~\ref{pbound}.

\begin{lemma}[Lemma 3.4.8 in \cite{Ridderbos2007}]\label{nw2w}
Let $X$ be a space, and suppose $pct(X)\leq\kappa$ and $nw(X)\leq 2^\kappa$. Then $w(X)\leq 2^\kappa$.
\end{lemma}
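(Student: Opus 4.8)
The plan is to build a base of size at most $2^\kappa$ by marrying a network of $X$ to the neighbourhood bases supplied by $pct(X)\le\kappa$. First I would record the cheap reduction coming from the identity $\chi(X)=\psi(X)pct(X)$ noted above: a network is in particular a pseudobase (in the $T_1$/regular setting in which this lemma is applied), so $\psi(X)\le nw(X)\le 2^\kappa$, and together with $pct(X)\le\kappa$ this gives $\chi(X)\le 2^\kappa\cdot\kappa=2^\kappa$. Thus every point already has a local base of size at most $2^\kappa$; the entire difficulty is to cut the number of \emph{distinct} base members down to $2^\kappa$ using the network, rather than letting that number grow with $|X|$.

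Concretely, I would fix a network $\scr{N}$ with $|\scr{N}|\le 2^\kappa$, closed under finite unions and consisting of closed sets, together with a cover $\scr{K}$ of $X$ by compact sets $K$ with $\chi(K,X)\le\kappa$, and for each such $K$ a neighbourhood base $\scr{U}(K)$ in $X$ with $|\scr{U}(K)|\le\kappa$ and $\Meet\scr{U}(K)=K$ (the latter being available since each $K$ is compact in a Hausdorff space). The target family is
$$\scr{B}=\{O\sse X:O\textup{ is open and }O=\Un\scr{M}\textup{ for some }\scr{M}\in[\scr{N}]^{\le\kappa}\},$$
whose cardinality is at most $|\scr{N}|^\kappa\le(2^\kappa)^\kappa=2^\kappa$. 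It then suffices to show $\scr{B}$ is a base, i.e. that for every $x\in X$ and every open $V\ni x$ there is $O\in\scr{B}$ with $x\in O\sse V$; this would give $w(X)\le|\scr{B}|\le 2^\kappa$ at once.

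To produce such an $O$ I would fix $K\in\scr{K}$ with $x\in K$ and separate $x$ from the compact set $K\minus V$ through the network: for each $y\in K\minus V$ choose a closed $N_y\in\scr{N}$ with $y\in N_y$ and $x\notin N_y$, extract a finite subcover of $K\minus V$, and take its union $N\in\scr{N}$, so that $x\in X\minus N$ and $(X\minus N)\meet K\sse V$. The set $X\minus N$ is open, contains $x$, and agrees with $V$ along $K$; the intention is to trim it by a member of $\scr{U}(K)$ and re-express the result as a union of at most $\kappa$ network elements, placing it in $\scr{B}$. The main obstacle — and exactly the step that forces work beyond the compact Hausdorff identity $w=nw$ — is this trimming: $X\minus N$ is controlled only on $K$, whereas points of $X\minus N$ lying off $K$ and outside $V$ are removed by no single neighbourhood of $K$, since $V$ need not be a neighbourhood of all of $K$. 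I expect to handle them by a closing-off argument that iterates the network-separation over those compacta of $\scr{K}$ meeting the offending region, using regularity to keep each intermediate open set a union of at most $\kappa$ network members, with bookkeeping that ensures only $2^\kappa$ sets are ever generated. Verifying that this iteration terminates with an $O\in\scr{B}$ satisfying $x\in O\sse V$ is the crux on which the whole bound rests.
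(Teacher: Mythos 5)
First, a caveat: the paper does not prove this lemma --- it is imported verbatim from Ridderbos's thesis --- so your attempt can only be measured against a correct argument, not against a proof in the text. Measured that way, it contains one step that fails outright and leaves the decisive step unproved. The failing step is the extraction of ``a finite subcover of $K\minus V$'' from the family $\{N_y: y\in K\minus V\}$: the $N_y$ are network elements, hence merely closed sets (typically with empty interior), and compactness yields finite subcovers only from covers by \emph{open} sets. Concretely, if $K\minus V$ is an uncountable compact metrizable set and $\kappa=\omega$, the singletons are legitimate members of a network of size at most $2^\kappa$ and admit no finite subcover. The classical repair --- the one behind $w=nw$ for compacta --- is to pre-assign to each pair $(M,N)\in\scr{N}^2$ that can be separated a fixed pair of disjoint open sets $U_{M,N}\supseteq M$ and $V_{M,N}\supseteq N$, cover the compact set $K\minus V$ by finitely many of the \emph{open} sets $V_{M_y,N_y}$, and take $\Meet_i U_{M_{y_i},N_{y_i}}$. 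But that construction yields a subbase of at most $nw(X)^2\leq 2^\kappa$ fixed open sets (to be combined with the $\kappa$-sized neighbourhood bases of the compacta covering $X$), not open sets that are unions of at most $\kappa$ network elements; it does not produce members of your family $\scr{B}$.

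Second, you state yourself that the trimming of $X\minus N$ by $\scr{U}(K)$ and the re-expression of the result as a union of at most $\kappa$ network elements ``is the crux on which the whole bound rests,'' and you do not carry it out. This is not a deferrable detail: there is no visible reason why an open set $O$ with $x\in O\sse V$ should be a union of only $\kappa$ network elements (writing an open set as a union of network elements in general costs one element per point), so it is not even clear that $\scr{B}$ is a base for $X$; and the proposed iteration over ``offending'' compacta comes with no termination or cardinality control. A smaller further issue: replacing $\scr{N}$ by a network of closed sets requires regularity, whereas the lemma is invoked in the paper for spaces that are only assumed Hausdorff. Your opening reduction ($\psi(X)\leq nw(X)$, hence $\chi(X)\leq 2^\kappa$) is correct, as is your diagnosis of where the difficulty lies, but as it stands the argument does not close; the route through separated pairs of network elements combined with the neighbourhood bases $\scr{U}(K)$ is the one that does.
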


\begin{corollary}\label{anotherw(X)}
If $X$ is homogeneous and Hausdorff then $w(X)\leq 2^{pwL_c(X)wt(X)\pi\chi(X)pct(X)}$.
\end{corollary}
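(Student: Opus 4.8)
The plan is to combine the cardinality bound from Theorem~\ref{pbound} with Lemma~\ref{nw2w}. The key observation is that for any space $X$ one has $nw(X)\leq |X|$, since the singletons form a network. Thus Theorem~\ref{pbound} immediately gives $nw(X)\leq |X|\leq 2^\kappa$, where $\kappa=pwL_c(X)wt(X)\pi\chi(X)pct(X)$. Since $pct(X)\leq\kappa$ by construction, Lemma~\ref{nw2w} then applies and yields $w(X)\leq 2^\kappa$, which is exactly the desired inequality.

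More concretely, I would first set $\kappa=pwL_c(X)wt(X)\pi\chi(X)pct(X)$ and invoke Theorem~\ref{pbound} to conclude $|X|\leq 2^\kappa$. Next I would note that the collection of all singletons $\{\{x\}:x\in X\}$ is a network for $X$, so $nw(X)\leq |X|\leq 2^\kappa$. Finally, since trivially $pct(X)\leq\kappa$, the hypotheses of Lemma~\ref{nw2w} are satisfied with this $\kappa$, giving $w(X)\leq 2^\kappa=2^{pwL_c(X)wt(X)\pi\chi(X)pct(X)}$.

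I do not anticipate any serious obstacle here, as this is a short deduction from already-established results; the only point requiring a moment's care is the standard fact that $nw(X)\leq |X|$, which holds because every space admits the network of singletons and the weight can then be controlled via $pct$ using Lemma~\ref{nw2w}. The homogeneity and Hausdorff hypotheses are used only through their role in Theorem~\ref{pbound}; once the cardinality bound is in hand, the passage to weight is purely a matter of the network-weight relationship encoded in Lemma~\ref{nw2w}.
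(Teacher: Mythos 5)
Your proof is correct and follows exactly the paper's own argument: set $\kappa=pwL_c(X)wt(X)\pi\chi(X)pct(X)$, apply Theorem~\ref{pbound} to get $nw(X)\leq|X|\leq 2^\kappa$, and then use Lemma~\ref{nw2w} with $pct(X)\leq\kappa$ to conclude $w(X)\leq 2^\kappa$. The only difference is that you spell out the standard fact $nw(X)\leq|X|$ via the network of singletons, which the paper leaves implicit.
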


\begin{proof}
Let $\kappa=pwL_c(X)wt(X)\pi\chi(X)pct(X)$. By Theorem~\ref{pbound}, we have $nw(X)\leq |X|\leq 2^\kappa$. As $pct(X)\leq\kappa$, it follows by Lemma~\ref{nw2w} that $w(X)\leq 2^\kappa$.
\end{proof}

\begin{lemma}[Proposition 3.5 in \cite{BC2020}]\label{Csatnetwork}
Let $X$ be a regular space, $\kappa=wt(X)$, and $D\sse X$ be $\scr{C}$-saturated. Then $nw(\closure{D})\leq |D|^\kappa$.
\end{lemma}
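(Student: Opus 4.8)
The plan is to exhibit an explicit network for the subspace $\closure{D}$ of cardinality at most $|D|^\kappa$. The natural candidate is
$$\scr{N}=\{\closure{E}\meet\closure{D}:E\in[D]^{\le\kappa}\},$$
whose cardinality is bounded by $|[D]^{\le\kappa}|=|D|^\kappa$. Fixing a cover $\scr{C}$ witnessing $wt(X)=\kappa$ for which $D$ is $\scr{C}$-saturated, the whole proof reduces to verifying the network property: for every $x\in\closure{D}$ and every open $U\sse X$ with $x\in U$, there is $E\in[D]^{\le\kappa}$ with $x\in\closure{E}\sse U$.

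First I would shrink the neighborhood using regularity: choose an open $V$ with $x\in V\sse\closure{V}\sse U$. This guarantees that once a suitable $E\sse V$ is found, automatically $\closure{E}\sse\closure{V}\sse U$, so the only remaining task is to produce a $\kappa$-sized $E\sse D\meet V$ with $x\in\closure{E}$.

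The key step, and the one place where the hypotheses must be combined carefully, is localizing the tightness argument at $x$. Since $\scr{C}$ is a cover of $X$ (not merely of $D$), there is some $C_0\in\scr{C}$ with $x\in C_0$. Because $D$ is $\scr{C}$-saturated, $D\meet C_0$ is dense in $D$, hence dense in $\closure{D}$, so $x\in\closure{D\meet C_0}$; intersecting with the open set $V$ gives $x\in\closure{D\meet C_0\meet V}$. Writing $A=D\meet C_0\meet V\sse C_0$, we then have $x\in\closure{A}\meet C_0$, that is, $x$ lies in the closure of $A$ computed in the subspace $C_0$. Now $t(C_0)\le\kappa$ applies at the point $x\in C_0$, yielding $E\in[A]^{\le\kappa}$ with $x\in\closure{E}\meet C_0\sse\closure{E}$. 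Since $E\sse A\sse V$ we get $\closure{E}\sse\closure{V}\sse U$, and since $E\sse D$ we have $E\in[D]^{\le\kappa}$, so $\closure{E}\meet\closure{D}\in\scr{N}$ is a network element capturing $x$ inside $U$, as required.

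I expect the main obstacle to be precisely this localization: the tightness hypothesis is only available as $t(C)\le\kappa$ for the members of $\scr{C}$, and a priori $x$ need not belong to $D$ or to any prescribed member of $\scr{C}$. The resolution is the observation that $\scr{C}$ covers all of $X$, so some $C_0$ contains $x$, while $\scr{C}$-saturation forces $x\in\closure{D\meet C_0}$; together these let the subspace tightness of $C_0$ be applied directly at $x$. It is worth noting that this argument uses only that $\scr{C}$ is a cover with $t(C)\le\kappa$ and that $D$ is $\scr{C}$-saturated, together with regularity of $X$; the $cl_{2^\kappa}$ clause in the definition of $wt(X)$ plays no role here.
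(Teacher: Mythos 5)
Your argument is correct: the family $\{\closure{E}\meet\closure{D}:E\in[D]^{\le\kappa}\}$ is indeed a network for $\closure{D}$, and the localization step (pick $C_0\in\scr{C}$ containing $x$, use $\scr{C}$-saturation to get $x\in\closure{D\meet C_0\meet V}$, then apply $t(C_0)\le\kappa$ inside $C_0$) is exactly the right way to combine the hypotheses. The paper itself only cites this as Proposition 3.5 of \cite{BC2020} without reproducing a proof, but your argument is the standard one for such statements and matches the intended proof; your closing observation that only the cover, the tightness of its members, $\scr{C}$-saturation, and regularity are used is also accurate.
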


We arrive at a result for regular, homogeneous spaces. It extends Theorem~\ref{cpthomog}(a), which states that $w(X)\leq 2^{wt(X)}$ for homogeneous compacta. Compare to Corollary~\ref{anotherw(X)}.

\begin{theorem}\label{w(X)}
Let $X$ be a homogeneous regular Hausdorff space. Then $w(X)\leq 2^{L(X)wt(X)pct(X)}$.
\end{theorem}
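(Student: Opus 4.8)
The plan is to prove $w(X)\leq 2^{L(X)wt(X)pct(X)}$ for a homogeneous regular Hausdorff space $X$ by first bounding the weight of a suitable compact set $G$ together with its closure-of-a-dense-subset, then spreading this bound across all of $X$ using homogeneity, and finally applying Lemma~\ref{nw2w} to pass from network weight to weight. Set $\kappa=L(X)wt(X)pct(X)$. Since $wt(X)pct(X)\leq\kappa$, Theorem~\ref{compactsubset2} furnishes a nonempty compact set $G\sse X$ and a $\scr{C}$-saturated set $H\in[X]^{\leq 2^\kappa}$ with $G\sse\closure{H}$ and $\chi(G,X)\leq\kappa$, where $\scr{C}$ witnesses $wt(X)\leq\kappa$.

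The key idea is to control the network weight of $\closure{H}$. By Lemma~\ref{Csatnetwork}, since $X$ is regular, $\kappa=wt(X)$-style bound applies (using $wt(X)\leq\kappa$) and $H$ is $\scr{C}$-saturated, so $nw(\closure{H})\leq |H|^{wt(X)}\leq (2^\kappa)^\kappa=2^\kappa$. In particular the compact set $G$, being contained in $\closure{H}$, has $nw(G)\leq 2^\kappa$; and since $G$ is compact Hausdorff, $w(G)=nw(G)\leq 2^\kappa$. This gives us a single compact set of weight at most $2^\kappa$ sitting inside $X$ and contained in the closure of a set of size at most $2^\kappa$.

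Next I would use homogeneity to cover $X$ by translates of $G$. Fix $p\in X$; for each $x\in X$ choose a homeomorphism $h_x$ with $h_x(p)=x$, giving a cover $\scr{G}^\prime=\{h_x[G]:x\in X\}$ of $X$ by compact sets of character at most $\kappa$ (characters are homeomorphism-invariant). Since $pwL_c(X)\leq L(X)\leq\kappa$ and $wt(X)\leq\kappa$, Theorem~\ref{pwL_c} extracts a subfamily $\scr{G}\sse\scr{G}^\prime$ with $|\scr{G}|\leq 2^\kappa$ and $X=\closure{\Un\scr{G}}$. Each $G^\prime=h_x[G]\in\scr{G}$ satisfies $G^\prime\sse\closure{h_x[H]}$ with $|h_x[H]|\leq 2^\kappa$, so $D=\Un_{G^\prime\in\scr{G}}h_x[H]$ is dense in $X$ with $|D|\leq 2^\kappa\cdot 2^\kappa=2^\kappa$, whence $d(X)\leq 2^\kappa$. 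Applying Lemma~\ref{Csatnetwork} once more (or bounding $nw(X)$ directly via a union of the $\scr{C}$-saturated pieces $h_x[H]$, which remain $\scr{C}$-saturated as $X$ is homogeneous and translates preserve the witnessing structure) yields $nw(X)\leq 2^\kappa$.

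The main obstacle I anticipate is the bookkeeping needed to guarantee that $nw(X)\leq 2^\kappa$ rather than merely $d(X)\leq 2^\kappa$: a dense set of size $2^\kappa$ alone does not bound the network weight, so I must genuinely assemble a network. The cleanest route is to take $\Un_{G^\prime\in\scr{G}}h_x[H]$, observe this is a union of at most $2^\kappa$ many $\scr{C}$-saturated sets each of size $\leq 2^\kappa$, hence itself $\scr{C}$-saturated of size $\leq 2^\kappa$, and apply Lemma~\ref{Csatnetwork} to its closure (which is all of $X$) to get $nw(X)\leq (2^\kappa)^{wt(X)}=2^\kappa$. Once $nw(X)\leq 2^\kappa$ and $pct(X)\leq\kappa$ are in hand, Lemma~\ref{nw2w} immediately gives $w(X)\leq 2^\kappa=2^{L(X)wt(X)pct(X)}$, completing the proof.
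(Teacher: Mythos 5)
Your opening matches the paper: Theorem~\ref{compactsubset2} gives the compact set $G$ and the $\scr{C}$-saturated set $H$ with $|H|\leq 2^\kappa$ and $G\sse\closure{H}$, and Lemma~\ref{Csatnetwork} gives $nw(\closure{H})\leq |H|^\kappa\leq 2^\kappa$. The gap is in how you globalize this bound. First, Theorem~\ref{pwL_c} only produces a subfamily $\scr{G}$ with $X=\closure{\Un\scr{G}}$, i.e.\ the union of the selected translates is merely \emph{dense}. To assemble a network for $X$ you need every point of $X$ to lie in some $h_x[\closure{H}]$, so you need an honest cover, and your route through $pwL_c$ cannot deliver one. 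The paper instead uses Theorem~\ref{L(X)}: each $h_x[G]$ is a compact $G^c_\kappa$-set, and since $L(X^c_\kappa)\leq 2^{L(X)wt(X)\cdot\kappa}=2^\kappa$ one extracts $A\in[X]^{\leq 2^\kappa}$ with $X=\Un\{h_x[G]:x\in A\}\sse\Un\{h_x[\closure{H}]:x\in A\}$. This is precisely where the factor $L(X)$ in the exponent is consumed.

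Second, your fallback --- that $D=\Un_{x}h_x[H]$ is $\scr{C}$-saturated because ``translates preserve the witnessing structure'' --- is not correct as stated. Saturation is relative to a fixed cover: $h_x[H]$ is $h_x[\scr{C}]$-saturated, where $h_x[\scr{C}]=\{h_x[C]:C\in\scr{C}\}$ is a different witnessing cover for each $x$, and there is no single cover with respect to which all the translates, let alone their union, are saturated. So Lemma~\ref{Csatnetwork} cannot be applied to $D$, and your plan for bounding $nw(X)$ collapses at exactly the step you flagged as the main obstacle. The repair is the paper's: apply Lemma~\ref{Csatnetwork} once, to $H$ itself, to get a network $\scr{N}$ for $\closure{H}$ with $|\scr{N}|\leq 2^\kappa$, and then verify directly that $\scr{M}=\{h_x[N]:N\in\scr{N},\ x\in A\}$ is a network for $X$ (given $y\in U$ open, choose $x\in A$ with $y\in h_x[\closure{H}]$, pull back along $h_x$, apply $\scr{N}$, push forward). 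Then $nw(X)\leq 2^\kappa$, and the final appeal to Lemma~\ref{nw2w} with $pct(X)\leq\kappa$ goes through as you describe.
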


\begin{proof}
Let $\kappa=L(X)wt(X)pct(X)$, and let $\scr{C}$ be a cover of $X$ witnessing that $wt(X)\leq\kappa$. By Theorem~\ref{compactsubset2}, there exists a nonempty compact set $G\sse X$ and $H\sse X$ such that $H$ is $\scr{C}$-saturated, $|H|\leq 2^\kappa$, $G\sse\closure{H}$, and $\chi(G,X)\leq\kappa$. Fix $p\in G$. As $X$ is homogeneous, for all $x\in X$ there exists a homeomorphism $h_x:X\to X$ such that $h_x(p)=x$. Then $\scr{F}=\{h_x[G]:x\in X\}$ is a cover of $X$ by compact $G^c_\kappa$-sets  and, for all $x\in X$, $h_x[G]\sse h_x[\closure{H}]$. By Theorem~\ref{L(X)}, there exists $A\sse X$ such that $|A|\leq 2^\kappa$ and $X=\Un\{h_x[G]:x\in A\}=\Un\{h_x[\closure{H}]:x\in A\}$.

As $H$ is $\scr{C}$-saturated and $X$ is regular, by Lemma~\ref{Csatnetwork} it follows that $nw(\closure{H})\leq|H|^\kappa\leq (2^\kappa)^\kappa=2^\kappa$. Let $\scr{N}$ be a network for $\closure{H}$ such that $|\scr{N}|\leq 2^\kappa$. Let $\scr{M}=\{h_x[N]:N\in\scr{N}, x\in A\}$ and note $|\scr{M}|\leq 2^\kappa\cdot 2^\kappa=2^\kappa$. We show $\scr{M}$ is a network for $X$. Let $y\in U$ where $U$ is open in $X$. There exists $x\in A$ such that $y\in h_x[\closure{H}]\meet U$. Then $h_x^{\leftarrow}(y)\in \closure{H}\meet h_x^{\leftarrow}[U]$. As $\scr{N}$ is a network for $\closure{H}$, there exists $N\in\scr{N}$ such that $h_x^{\leftarrow}(y)\in N\sse\closure{H}\meet h_x^{\leftarrow}[U]$. It follows that $y\in h_x[N]\sse h_x[\closure{H}]\meet U\sse U$. As $h_x[N]\in\scr{M}$, we have that $\scr{M}$ is a network for $X$ and $nw(X)\leq 2^\kappa$. Therefore, as $pct(X)\leq\kappa$, it follows by Lemma~\ref{nw2w} that $w(X)\leq 2^\kappa$. This completes the proof.
\end{proof}

We do not know if Theorem~\ref{w(X)} holds when the homogeneous condition is replaced with the weaker power homogeneous property.

\begin{question}
If $X$ is a power homogeneous regular Hausdorff space, is $w(X)\leq 2^{L(X)wt(X)pct(X)}$?
\end{question}

\begin{question} If $X$ is power homogeneous and Hausdorff, is $|X|\leq 2^{pwL_c(X)wt(X)\pi\chi(X)pct(X)}$?
\end{question}

Finally, as Theorem~\ref{cptPH} only gives a partial answer to Question 3.13 in \cite{BS2020}, we ask that question again.

\begin{question} If $X$ is a power homogeneous compactum, is $|X|\leq 2^{wt(X)\pi\chi(X)}$?
\end{question}

\end{document}